\newtheorem*{conj*}{Conjecture}
\newtheorem*{thm*}{Theorem}
\newtheorem{prop}{Proposition}[section]
\newtheorem{LM}{Lemma}[section]
\newtheorem{thm}{Theorem}[section]
\newtheorem{df}{Definition}[section]
\newtheorem{cor}{Corollary}[section]
\newtheoremstyle{pourlesremarques}{\topsep}{\topsep}{\normalfont}{}{\bfseries}{.}{ }{}
\theoremstyle{pourlesremarques}
\newtheorem{rem}{Remark}[section]
\newtheorem*{rem*}{Remark}
\newtheoremstyle{pourlesexemples}{\topsep}{\topsep}{\normalfont}{}{\bfseries}{.}{ }{}
\theoremstyle{pourlesexemples}
\renewcommand{\o}{\mathfrak{O}}
\newcommand{\e}{\epsilon}
\newcommand{\w}{\varpi}
\renewcommand{\d}{\delta}
\newcommand{\R}{\mathbb{R}}
\renewcommand{\l}{\lambda}
\newcommand{\C}{\mathbb{C}}
\newcommand{\N}{\mathbb{N}}
\newcommand{\M}{\mathcal{M}}
\newcommand{\Z}{\mathbb{Z}}
\newcommand{\1}{\mathbf{1}}
\newcommand{\sm}{\mathcal{C}^\infty}
\title {\textbf{On the local Bump-Friedberg $L$-function II}}
\author{Nadir MATRINGE\footnote{Nadir Matringe, Universit\'e de Poitiers, Laboratoire de Math\'ematiques et Applications,
T\'el\'eport 2 - BP 30179, Boulevard Marie et Pierre Curie, 86962, Futuroscope Chasseneuil Cedex. Email: nadirmatringe@outlook.fr}}
\begin{document}
\maketitle

\begin{abstract}
Let $F$ be a $p$-adic field with residue field of cardinality $q$. To each irreducible representation of $GL(n,F)$, 
we attach a local Euler factor $L^{BF}(q^{-s},q^{-t},\pi)$ via the Rankin-Selberg method, and show that it is equal 
to the expected factor $L(s+t+1/2,\phi_\pi)L(2s,\Lambda^2\circ \phi_\pi)$ of the Langlands' parameter $\phi_\pi$ 
of $\pi$. The corresponding local integrals were introduced in \cite{BF}, and studied in \cite{Mcrelle}. This work 
is in fact the continuation of \cite{Mcrelle}. The result is a consequence of the fact that if 
$\d$ is a discrete series representation of $GL(2m,F)$, and $\chi$ is a character of Levi subgroup $L=GL(m,F)\times GL(m,F)$ 
which is trivial on $GL(m,F)$ embedded diagonally, then $\d$ is $(L,\chi)$-distinguished if an only if it admits a Shalika model.
This result was only established for $\chi=\1$ before.   
\end{abstract}

\section{Introduction} 
\label{intro}

Let $F$ be a $p$-adic field with residue field of cardinality $q$. 
In \cite{Mcrelle}, we attached to any irreducible representation $\pi$ of $GL(n,F)$ and any character $\alpha$ of $F^*$,
 an Euler factor $L^{lin}(s,\alpha,\pi)$ (denoted $L^{lin}(\pi,\chi_\alpha,s)$ in [ibid.]). It is defined as the gcd of 
a family of local integrals $\Psi(s,\alpha,W,\Phi)$ for $W$ in the Whittaker model of (the induced representation of 
Langlands' type above) $\pi$ and $\Phi$ a Schwartz map on $F^{\lfloor (n+1)/2 \rfloor}$. These integrals were introduced in \cite{BF} 
where the corresponding global integrals were studied. More precisely, in \cite{BF}, the corresponding global 
integrals were considered as maps of two complex variables, whereas in \cite{Mcrelle}, the character 
$\alpha$ is fixed, and the integrals are viewed as maps of the complex variable $s$. In particular, 
if $\alpha$ is an unramified character $|.|^t$, writing $L^{lin}(s,t,\pi)$ for $L^{lin}(s,\alpha,\pi)$ and
 $\Psi(s,t,W,\Phi)$ for $\Psi(s,\alpha,W,\Phi)$, it is not obvious that the map $L^{lin}(s,t,\pi)$ is rational in $q^{-s}$ and $q^{-t}$. Here, 
we consider the integrals $\Psi(s,t,W,\Phi)$ as maps of the variables $s$ and $t$. 
We show that they belong to $\C(q^{-s},q^{-t})$, and admit a gcd in a certain sense, that we denote $L^{BF}(s,t,\pi)$, which is 
the inverse of an element of $\C[q^{-s},q^{-t}]$ with constant term equal to $1$. We show that $L^{BF}(s,t,\pi)$ admits a functional equation, and that it is equal to the Galois factor $L(s+t+1/2,\phi_\pi)L(2s,\Lambda^2\circ \phi_\pi)$ (Theorem \ref{equal}), where 
$\phi_\pi$ is the Langlands' parameter of $\pi$. 
This result follows from the results of \cite{Mcrelle} about $L^{lin}$, and from the following new ingredient which we now explain. 
Let $\d$ be a discrete series representation of $GL(2m,F)$, and $\chi$ is any character of the bloc 
diagonal Levi $L=GL(m,F)\times GL(m,F)$, 
trivial on $GL(m,F)$ embedded diagonally, then $\d$ is $(L,\chi)$-distinguished if an only if it admits a Shalika model 
(Theorem \ref{main}). This result was only proved when $\chi$ is trivial before. 

{\ack{I thank the referees for their useful comments. This work was partially supported by the research project ANR-13-BS01
-0012 FERPLAY.}}

\subsection{Notations}

We denote by $F$ a $p$-adic field, by $\o$ its ring of integers, by $\w$ its uniformiser, and by $q$ the cardinality of its residue field. We denote by $|.|$ the absolute value of $F$ normalised by $|\w|=q^{-1}$. We denote by 
$G_n$ the group $GL(n,F)$ of invertible elements of the algebra $\mathcal{M}(n,F)$ (which we denote by $\mathcal{M}_n$). As usual, 
we will consider $G_{n-1}$ as a subgroup of $G_n$ via the embedding $g\mapsto diag(g,1)$. We denote by $\nu(g)$ or simply 
by $|g|$ the positive real number $|det(g)|$ when $g\in G_n$. The group $A_n$ will be the diagonal torus of $G_n$, contained in the Borel subgroup $B_n$ of upper triangular matrices of $G_n$.  We denote by $N_n$ the unipotent radical of $B_n$ (the matrices of $B_n$ with $1$ on the diagonal). 
We denote by $P_n$ the mirabolic subgroup of $G_n$, i.e. the group of matrices with last row equal to 
$(0,\dots,0,1)$. We set $K_n=GL(n,\o)$. We denote by $w_n$ the element of the symmetric group $\mathfrak{S}_n$ naturally embedded in $G_n$, defined by 
$$\begin{pmatrix} 1 & 2 & \dots & m-1 & m & m+1 & m+2 & \dots & 2m-1 & 2m \\
 1 & 3 & \dots & 2m-3 & 2m-1 & 2 & 4 & \dots & 2m-2 & 2m  
 \end{pmatrix}$$ when $n=2m$ is even, and by 
$$\begin{pmatrix} 1 & 2 & \dots & m-1 & m & m+1 & m+2 &\dots & 2m & 2m+1 \\
 1 & 3 & \dots & 2m-3 & 2m-1 & 2m+1 & 2 & \dots & 2m-2 & 2m  
 \end{pmatrix}$$ when $n=2m+1$ is odd. We denote by $L_n$ the standard Levi subgroup of $G_n$ 
which is $G_{\lfloor (n+1)/2 \rfloor}\times G_{\lfloor n/2 \rfloor}$ embedded by the map $(g_1,g_2)\mapsto diag(g_1,g_2)$. If $\alpha$ 
is a smooth character of $F^*$, we denote by $\psi_\alpha$ the character of $L_n$ defined as 
$$\psi_\alpha:diag(g_1,g_2)\mapsto \alpha(det(g_1))/\alpha(det(g_2)).$$
We denote by $H_n$ the group $w_n^{-1} L_n w_n$, by $h(g_1,g_2)$ the matrix 
$w_n^{-1} diag(g_1,g_2)w_n$ of $H_n$ (with $diag(g_1,g_2)\in L_n$), and by $\chi_\alpha$ the character of $H_n$ defined as 
$$\chi_\alpha:h(g_1,g_2)\mapsto \alpha(det(g_1))/\alpha(det(g_2)).$$ Note that the groups $H_n$ are compatible in the sense 
that $H_n\cap G_{n-1}$ is naturally isomorphic to $H_{n-1}$, which will allow us to consider $H_{n-1}$ as a subgroup of $H_n$. 
We denote by $d_n$ the matrix $diag(1,-1,1,-1,\dots)$ of $G_n$, the group $H_n$ is the subgroup of $G_n$ fixed by the involution 
$g\mapsto d_n g d_n$. We denote by $\d_n$ the character 
$$\d_n=\chi_{|.|}: h(g_1,g_2)\mapsto |g_1|/|g_2|$$ of $H_n$, 
and we denote by $\chi_n$ (resp. $\mu_n$) the character of $H_n$ equal to $\d_n$ when $n$ is odd (resp. even), 
and trivial when $n$ is even (resp. odd). Hence ${\chi_n}_{|H_{n-1}}=\mu_{n-1}$, and ${\mu_n}_{|H_{n-1}}=\chi_{n-1}$. 
 If $C$ is a subset of $G_n$, we sometimes denote by $C^\sigma$ the set $C\cap H_n$. 
When $n=2m$ is even, we denote by $U_{(m,m)}$ the subgroup of $G_n$ of matrices 
$$u(x)=\begin{pmatrix} I_m & x \\ 0 & I_m\end{pmatrix}$$ for $x\in \mathcal{M}_m$, and we denote by $S_n$ the Shalika 
subgroup of $G_n$ of matrices of the form $u(x)diag(g,g)$, for $g\in G_m$ and $x\in \mathcal{M}_m$.

 We fix until the end of this work, a nontrivial smooth character $\theta$ of $(F,+)$. By abuse of notation, we also 
denote by $\theta$ the character of $N_n$ defined as 
$n \mapsto \theta(\sum_{i=1}^{n-1} n_{i,i+1})$. For $n$ even, we will denote by $\Theta$ the character of 
$S_n$ given by the formula 
$$\Theta(u(x)diag(g,g))=\theta(Tr(x)).$$ All the representations 
of closed subgroups of $G_n$ that we will consider will be smooth and complex. We will use the product notation 
of \cite{BZ} for normalised parabolic induction.

\subsection{Representations of Whittaker type}

Here, for convenience of the reader, we summarise Section 2.2 of \cite{Mcrelle}, 
which is a compilation of well-known results about 
representations of Whittaker type. Those results are extracted from 
\cite{BZ}, \cite{Z}, \cite{R}, \cite{JS1}, we refer to \cite{Mcrelle} for the details. 

\begin{df}\label{dsicreteseries}
Let $a<b$ be integers, and $r$ a positive integer, and set $n=r(b-a+1)$. If $\rho$ be a cuspidal representation of $G_r$, then the representation 
$\nu^a\rho \times \dots \times \nu^{b-1}\rho$ has a unique irreducible quotient which we denote by $\d([a,b],\rho)$. 
We call such a representation of $G_n$ a discrete series representation. For $d\in \N^*$, we write $\d(d,\rho)$ for 
$\d([1-d,0],\nu^{(1-d)/2}\rho)$.

\end{df}

If $\pi$ is a representation of $G_n$ admitting a central character, we denote it by $\omega_\pi$. For $\chi$ a character of 
$F^*$, we denote by $Re(\chi)$ the real number $r$ such that $|\chi|_\R=|.|^{r}$.

\begin{df}\label{whittype}
Let $\pi$ be a representation of $G_n$, such that $\pi$ is a product of discrete series $\d_1\times\dots\times \d_t$ of smaller linear groups, 
we say that $\pi$ is of Whittaker type.\\
If the discrete series $\d_i$ are ordered such that $Re(\omega_{\d_i})\geq Re(\omega_{\d_{i+1}})$, we say that $\pi$ is (induced) of Langlands' type. It follows from \cite{Sil} that $\pi$ has a unique irreducible quotient $L(\pi)$ (its \textit{Langlands' quotient}) which determines $\pi$, and that any irreducible representation of $G_n$ is the Langlands' quotient of a representation of Langlands' type.
\end{df}

We now define the Whittaker model of a representation of Whittaker type. We denote by $Ind$ the smooth induction functor, and by 
$ind$ the compact smooth induction functor.

\begin{prop}\label{fourretout}
Let $\pi$ be a representation of Whittaker type, then $Hom_{N_n}(\pi,\theta)$ is of dimension $1$, hence 
the space of intertwining operators $Hom_{G_n}(\pi,Ind_{N_n}^{G_n}(\theta))$ is of dimension $1$. The image of the 
(unique up to scaling) intertwining operator from $\pi$ to $Ind_{N_n}^{G_n}(\theta))$ is called the Whittaker model of $\pi$, we
 denote it by $W(\pi,\theta)$. When $\pi$ is of Langlands' type, the $G_n$-module $W(\pi,\theta)$ is isomorphic to $\pi$, and 
we set $W(L(\pi),\theta)=W(\pi,\theta)$, so this defines the Whittaker model of any irreducible representation.  
\end{prop}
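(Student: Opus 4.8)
This statement is a compendium of standard facts on representations of Whittaker type, extracted from \cite{BZ}, \cite{Z}, \cite{R} and \cite{JS1}; the plan is to recall those inputs and assemble them, the complete details being those of \cite{Mcrelle}. Throughout, write $\pi=\d_1\times\dots\times\d_t$ with each $\d_i$ a discrete series of some $G_{n_i}$ and $n=\sum_i n_i$, so that $\pi=Ind_{Q}^{G_n}(\d_1\otimes\dots\otimes\d_t)$ (normalised parabolic induction) for $Q$ the standard parabolic of $G_n$ with Levi factor $M=G_{n_1}\times\dots\times G_{n_t}$.

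I would first establish that $Hom_{N_n}(\pi,\theta)$ is one-dimensional. Each $\d_i$ is irreducible and generic with a one-dimensional space of Whittaker functionals (genericity of discrete series of general linear groups is \cite{Z}; uniqueness of the Whittaker functional is the Gelfand--Kazhdan/Shalika theorem). One then applies the geometric lemma of Bernstein--Zelevinsky \cite{BZ} to the restriction of $\pi$ to $N_n$: because $\theta$ is a non-degenerate character of $N_n$, the functor $Hom_{N_n}(-,\theta)$ annihilates every piece of the resulting filtration except the one supported on the open double coset in $Q\backslash G_n/N_n$ (this is Rodier's argument \cite{R}), and that piece contributes $Hom_{N_M}(\d_1\otimes\dots\otimes\d_t,\theta)\cong\bigotimes_i Hom_{N_{n_i}}(\d_i,\theta)$, which is one-dimensional; here $N_M=N_n\cap M$ and $\theta$ is restricted to it. Hence $\dim Hom_{N_n}(\pi,\theta)=1$, and Frobenius reciprocity for smooth induction gives $Hom_{G_n}(\pi,Ind_{N_n}^{G_n}(\theta))\cong Hom_{N_n}(\pi,\theta)$, again one-dimensional; I fix a non-zero $\lambda_\pi$ in this last space and set $W(\pi,\theta):=\lambda_\pi(\pi)$.

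It then remains to see that $\lambda_\pi$ is injective when $\pi$ is of Langlands' type. Let $K=\ker(\lambda_\pi)$, suppose $K\neq 0$, and choose an irreducible $G_n$-submodule $V\subseteq K$. The Whittaker functional of $\pi$ is $\lambda_\pi$ followed by evaluation at $1$, hence vanishes on $K\supseteq V$; applying $Hom_{N_n}(-,\theta)$ to $0\to V\to\pi\to\pi/V\to 0$ and using $\dim Hom_{N_n}(\pi,\theta)=1$, the restriction map $Hom_{N_n}(\pi,\theta)\to Hom_{N_n}(V,\theta)$ must vanish, so $Hom_{N_n}(\pi/V,\theta)$ is one-dimensional and, by exactness of the twisted Jacquet functor, $\pi/V$ contains the unique generic constituent $\tau_0$ of $\pi$. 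Were $V$ itself generic it would equal $\tau_0$, which would then occur once as the submodule $V$ and again in $\pi/V$, contradicting the multiplicity-one property of $\tau_0$ in a product of discrete series (\cite{Z}, \cite{R}); so $V$ is a non-generic irreducible submodule of $\pi$, and the whole matter reduces to excluding this for $\pi$ of Langlands' type, i.e. to the fact that a representation of Langlands' type has generic socle. This is the only non-formal step: it rests on the fine structure (socle and cosocle) of standard modules of $GL_n$, is proved in \cite{JS1} (see also \cite{Z}), and genuinely uses the ordering $Re(\omega_{\d_1})\geq\dots\geq Re(\omega_{\d_t})$ — for the reversed ordering it already fails in $GL_2$, where $\nu^{-1/2}\times\nu^{1/2}$ has the non-generic submodule $\1$. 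Granting it, $K=0$ and $W(\pi,\theta)=\pi/K\cong\pi$; since by \cite{Sil} a representation of Langlands' type is determined by its Langlands quotient, the prescription $W(L(\pi),\theta):=W(\pi,\theta)$ is unambiguous. The dimension count and Frobenius reciprocity being routine, this last injectivity is where the real work lies.
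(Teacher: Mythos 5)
The paper records this proposition without proof, deferring to Section 2.2 of \cite{Mcrelle} and ultimately to \cite{BZ}, \cite{Z}, \cite{R}, \cite{JS1}; your sketch accurately reconstructs the argument from precisely those sources (geometric lemma plus Rodier's open-cell computation for uniqueness of the Whittaker functional, Frobenius reciprocity, and Jacquet--Shalika for the isomorphism of a Langlands-ordered product onto its Whittaker model), so in substance it is the intended proof. One remark on the last step: \emph{standard modules have generic socle} is not genuinely a reduction of the injectivity claim but an equivalent restatement of it. Once one knows the top twisted Jacquet module of $\pi$ is one-dimensional, an irreducible submodule on which the Whittaker functional vanishes has trivial $\theta$-coinvariants and hence is non-generic, while conversely injectivity of the canonical map $\pi\to Ind_{N_n}^{G_n}(\theta)$ forces every irreducible submodule to embed in $Ind_{N_n}^{G_n}(\theta)$ and so to be generic. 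The appeal to \cite{JS1} at that point therefore carries the full weight of their theorem, which you do flag as the only non-formal ingredient; just be aware that the socle formulation is not doing independent work.
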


\begin{rem}
Notice that with this standard definition, even those irreducible representations of $G_n$ which do not admit a Whittaker functional on their space, still have a Whittaker model. For example the Whittaker model of the trivial representation $\1_{G_n}$ is in fact that of 
$\nu^{(n-1)/2}\times \dots \times \nu^{(1-n)/2}$, the latter indeed admitting a Whittaker functional on its space.
\end{rem}

If $\pi$ is an irreducible representation of $G_n$, we denote by $\pi^\vee$ its (smooth) contragredient. 
If $\pi=\d_1\times \dots \times\d_t$ is a representation of Whittaker type of $G_n$, and $w$ is the anti-diagonal matrix of $G_n$ with only ones on the second diagonal, then $\widetilde{\pi}:g\mapsto \pi(^t\!g^{-1})$ is of Whittaker type, isomorphic to 
$\d_t^\vee\times \dots \times \d_1^\vee$. In particular, if $\pi$ is of Langlands' type, then $\widetilde{\pi}$ as well, and 
$L(\widetilde{\pi})=L(\pi)^\vee$. Moreover for $W$ in $W(\pi,\theta)$, then $\widetilde{W}:g\mapsto W(w^t\!g^{-1})$ belongs to 
$W(\widetilde{\pi},\theta^{-1})$.\\ 

The asymptotics of Whittaker functions in a representation of Whittaker type are controlled by the exponents of the derivatives of this representation. We refer to 3.5 of \cite{BZ} for the definition of the derivatives $\pi^{(k)}$ of a representation $\pi$ of $G_n$. 
If $\pi$ is of finite length, then these derivatives have finite length (see for example Proposition 2.5 of \cite{M-asympt}).

\begin{df}
Let $\pi$ be a representation of $G_n$ of finite length. We call the $(n-k)$-exponents of $\pi$ the central characters of 
the irreducible subquotients of a Jordan-Holder series of $\pi^{(n-k)}$.
\end{df}
 
The following result is extracted from the proof of Theorem 
2.1 of \cite{M-asympt} (see the ``stronger statement'' in [loc. cit.]).

\begin{prop}\label{DL}
 Let $\pi$ be a representation of $G_n$ of Whittaker type. For $k \in \{1,\dots,n\}$, let $(c_{k,i_k})_{i_k=1,\dots,r_k}$ be the family of 
$(n-k)$-exponents of $\pi$, then for every $W$ in $W(\pi,\theta)$, the map $W(z_1\dots z_n)$
 is a linear combination of functions of the form 
$$c_{\pi}(t(z_n))\prod_{k=1}^{n-1} c_{k,i_k}(t(z_k))|z_k|^{(n-k)/2}v(t(z_k))^{m_k}\phi_k(t(z_k)),$$ 
where $z_k=diag(t(z_k)I_{k},I_{n-k})$, for $i_k$ between $1$ and $r_k$, non negative integers $m_k$, and 
functions $\phi_k$ in $C_c^\infty(F)$. \end{prop}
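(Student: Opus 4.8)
The plan is to follow the analysis in \cite[Theorem 2.1]{M-asympt} (the ``stronger statement'' there), which proceeds by induction on $n$ via the restriction of $\pi$ to the mirabolic $P_n$ and the Bernstein--Zelevinsky theory of derivatives; the case $n=1$ is immediate since then $\pi$ is a character and $W(\pi,\theta)$ is one-dimensional, spanned by $z_1\mapsto c_\pi(t(z_1))$. First I would reduce to the mirabolic: writing $z_1\cdots z_n=t(z_n)I_n\cdot p$ with $p=diag(t(z_1)\cdots t(z_{n-1}),\dots,t(z_{n-1}),1)\in P_n$, the central character gives $W(z_1\cdots z_n)=c_\pi(t(z_n))\,W(p)$, which produces the leftmost factor; moreover $p=z_1'\cdots z_{n-1}'$ with $z_k'=diag(t(z_k)I_k,I_{n-k})\in P_n$ of the same shape, and $W(\pi,\theta)|_{P_n}$ is a quotient of $\pi|_{P_n}$ by Proposition~\ref{fourretout}, so everything comes down to the restriction to $P_n$. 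I would also record the bookkeeping identity that $\prod_{k=1}^{n-1}|z_k|^{(n-k)/2}$, evaluated at $z_1'\cdots z_{n-1}'$, is $\delta_{B_n}^{1/2}$: this is what the powers of $|z_k|$ in the statement encode.

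The inductive step rests on the Bernstein--Zelevinsky exact sequence of $P_n$-modules $0\to\Phi^+\Phi^-(\pi|_{P_n})\to\pi|_{P_n}\to\Psi^+(\pi^{(1)})\to 0$, where $\pi^{(1)}=\Psi^-(\pi|_{P_n})$ is the first derivative, a finite-length representation of $G_{n-1}$ (more precisely, on the full filtration of $\pi|_{P_n}$, with graded pieces $(\Phi^+)^{k-1}\Psi^+(\pi^{(k)})$). On the quotient $\Psi^+(\pi^{(1)})$ one unwinds the normalised functor $\Psi^+$: a function there, at $diag(a_1,\dots,a_{n-1},1)$, equals $|a_1\cdots a_{n-1}|^{1/2}$ times the value at $diag(a_1,\dots,a_{n-1})$ of a Whittaker-type function of $\pi^{(1)}$ on $G_{n-1}$; applying the induction hypothesis to the constituents of $\pi^{(1)}$, the central direction of $G_{n-1}$ contributes a central character of $\pi^{(1)}$ in the variable $t(z_{n-1})$, and the twist $|a_1\cdots a_{n-1}|^{1/2}=\nu^{1/2}$ recombines with the powers $|z_k|^{((n-1)-k)/2}$ of the $(n-1)$-level expansion into $|z_k|^{(n-k)/2}$ via the torus identity $\delta_{B_{n-1}}^{1/2}\nu^{1/2}=\delta_{B_n}^{1/2}$. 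The submodule $\Phi^+\Phi^-(\pi|_{P_n})$ is a twisted compact induction from $P_{n-1}$, so by compactness of the induction and the $\nu^{1/2}$-twist its functions at $diag(a_1,\dots,a_{n-1},1)$ take the form $|z_{n-1}|^{1/2}\phi_{n-1}(t(z_{n-1}))$, with $\phi_{n-1}\in C_c^\infty(F)$, times a function on the $P_{n-1}$-torus built from $\Phi^-(\pi|_{P_n})$, which is handled by going one more rung down the mirabolic tower $P_n\supset P_{n-1}\supset\cdots$ (a second induction). The polynomial factors $v(t(z_k))^{m_k}$ would appear by running through Jordan--H\"older filtrations of the generally non-semisimple derivatives.

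Two technical points remain, and the second is the real obstacle. The first is merely careful bookkeeping of the half-integral powers of $\nu$ fed in by the normalisations of $\Psi^+$, $\Phi^+$ and by the modulus characters, which must be shown to assemble into exactly $\prod_k|z_k|^{(n-k)/2}=\delta_{B_n}^{1/2}$ — this is the recombination above. The hard part is to verify that the characters produced at each stage of the induction really are among the $(n-k)$-exponents of $\pi$ in the sense of the statement, i.e.\ central characters of constituents of $\pi^{(n-k)}$: the induction naturally produces central characters of constituents of the iterated derivatives $(\pi^{(i)})^{(j)}$, and one needs the set of these to be contained in the set of central characters of constituents of $\pi^{(i+j)}$ (the two representations need not have the same semisimplification, so this is not a formality). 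Since $\pi$ is of Whittaker type, hence a product of discrete series, all these derivatives can be computed explicitly from the Leibniz rule for derivatives of a product \cite{BZ} together with the fact that a derivative of a discrete series is zero or a twisted discrete series; the desired containment of exponents then follows by a combinatorial comparison of these explicit lists, and that comparison is where the real work lies — it is carried out in \cite{M-asympt}.
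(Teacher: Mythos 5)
The paper does not actually supply a proof of this proposition: the sentence immediately preceding the statement says it is ``extracted from the proof of Theorem 2.1 of \cite{M-asympt} (see the ``stronger statement'' in [loc.\ cit.])'', and no argument is given. Your sketch reconstructs the same Bernstein--Zelevinsky strategy that is used there (central character to reduce to $P_n$, the BZ filtration of $\pi|_{P_n}$ with graded pieces $(\Phi^+)^{k-1}\Psi^+(\pi^{(k)})$, bookkeeping of the $\nu^{1/2}$-normalisations so that they assemble into $\delta_{B_n}^{1/2}=\prod_k|z_k|^{(n-k)/2}$, and Schwartz factors coming from the compact inductions), and I have checked that your explicit identities $|z_k|^{(n-k)/2}=\delta_{B_n}(z_k)^{1/2}$ and $\delta_{B_n}|_{G_{n-1}}=\nu\,\delta_{B_{n-1}}$ are correct. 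Your diagnosis of the one genuine subtlety --- that naive iteration down the mirabolic tower produces exponents attached to iterated derivatives $(\pi^{(i)})^{(j)}$, and one must show these lie among the central characters of constituents of $\pi^{(i+j)}$ --- is also accurate, and for $\pi$ of Whittaker type this containment does hold by the Leibniz rule together with the fact that a derivative of a twisted discrete series is again one (so iterated derivatives of each $\delta_i$ coincide with direct ones, and the multi-index bookkeeping gives the inclusion of Jordan--H\"older constituents you need). So your proposal follows essentially the same route as the reference the paper invokes; like the paper, it ultimately defers the detailed verification to \cite{M-asympt} rather than carrying it out in full, which is consistent with how the proposition is treated in the text.
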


\subsection{Local Langlands correspondence and local factors}\label{galoisfactors}

We refer to Section 7 of \cite{BH} for the vocabulary and assertions concerning the Weil-Deligne representations of 
the Weil group of $F$ and their local constants. 
Let $W_F$ be the Weil group of $F$. If $\phi$ is a semi-simple Weil-Deligne representation of $W_F$, we denote by 
$L(s,\phi)$ its Artin $L$-function, which satisfies $L(s,\phi_1\oplus\phi_2)=L(s,\phi_1)L(s,\phi_2)$ for any 
 semi-simple Weil-Deligne representations $\phi_1$ and $\phi_2$ of $W_F$. On the other hand, if $\pi$ and $\pi'$ are 
irreducible representations of $G_n$ and $G_{n'}$ respectively, we denote by $L(s,\pi,\pi')$ the local 
factor attached to the pair $(\pi,\pi')$ in \cite{JPS}. We will denote by $L(s,\pi)$ the factor $L(s,\pi,\1)$ 
where $\1$ is the trivial representation of the trivial group $G_0$. It is a theorem from 
\cite{HT} and \cite{H} that there is a bijection $\phi: \pi\mapsto \phi_\pi$ from the set 
of irreducible representations of $G_n$ (up to isomorphism) to the set of 
semi-simple Weil-Deligne representations of $W_F$ of dimension $n$ (up to isomorphism), which satisfies amongst other properties, that if 
 $\pi$ and $\pi'$ are 
irreducible representations of $G_n$ and $G_{n'}$ respectively, the one has $$L(s,\pi,\pi')=L(s,\phi_\pi\otimes \phi_{\pi'}).$$
 The map $\phi$ is called the Langlands correspondence, and if $\pi$ is an irreducible representation of $G_n$, we will say that 
$\phi_\pi$ is the Langlands' parameter of $\pi$. 
If $\phi$ is a semi-simple Weil-Deligne representation of $W_F$, we will denote by $\Lambda^2\circ \phi$ its exterior-square, 
which is again a semi-simple Weil-Deligne representation of $W_F$. If $\phi_1,\dots,\phi_t$ are semi-simple Weil-Deligne representations
 of $W_F$. Because 
$$\Lambda^2 \circ (\oplus_{i=1}^t \phi_i)= \oplus_{k=1}^t \Lambda^2 \circ \phi_k \oplus_{1\leq i<j \leq t} \phi_i\otimes \phi_j,$$  
we deduce the formula 
$$L(s,\Lambda^2 \circ (\oplus_{i=1}^t \phi_i))=\prod_{k=1}^t L(s,\Lambda^2 \circ \phi_k) \prod_{1\leq i<j \leq t}  L(s,\phi_i\otimes \phi_j).$$
Notice that it is well known that if $\pi=L(\d_1,\dots,\d_t)$ is an irreducible representation of $G_n$, then 
$$\phi_\pi=\oplus_{i=1}^t \phi_{\d_i}.$$

\section{Distinguished discrete series}

Let $H$ be a closed subgroup of $G_n$, and $\chi$ a character of $H$, we recall 
that a representation $\pi$ of $G_n$ is said to be $(H,\chi)$-distinguished if the space 
$Hom_H(\pi,\chi)$ is nonzero. In this section, which is the core of the paper, we show that if $\d$ is a discrete series representation 
of $G_n$ (with $n$ even), and $\psi$ is a character of $L_n$ trivial on the diagonal embedding of 
$G_{n/2}$ in $L_n$, then $\d$ is $(L_n,\psi)$-distinguished if and only if it admits a Shalika model (i.e. if 
$Hom_{S_n}(\d,\Theta)\neq 0$). One direction, namely $Hom_{S_n}(\d,\Theta)\neq 0 \Rightarrow Hom_{L_n}(\d,\psi_\alpha)\neq 0$, follows 
from the remark in Section 6.1 of \cite{JR}. The other direction was also known for $\psi$-trivial (the proof 
adapting easily to unitary $\psi$, see Remark \ref{dif}), the whole point is to extend the result to non unitary characters $\psi$.

\begin{prop}\label{cusp}
Let $\rho$ be a cuspidal representation of $G_r$, with $r$ a positive even integer, then $\rho$ is $(H_r,\chi_\alpha)$-distinguished if and only if 
it is $(S_r,\Theta)$-distinguished.
\end{prop}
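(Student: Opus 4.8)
The plan is to establish the two implications separately, the first being the remark in Section 6.1 of \cite{JR} and the second being the point of the proposition. After conjugating by $w_r$, both implications amount to comparing $\operatorname{Hom}_{L_r}(\rho,\psi_\alpha)$ with $\operatorname{Hom}_{S_r}(\rho,\Theta)$, and we shall in fact see that these two spaces have the same dimension. Write $P=P_{(m,m)}$ for the standard parabolic subgroup of $G_{2m}$ with Levi factor $L_r=G_m\times G_m$ and abelian unipotent radical $U=U_{(m,m)}$, and set $\Delta G_m=\{\operatorname{diag}(g,g):g\in G_m\}$, so that $S_r=\Delta G_m\ltimes U$, $L_r\cap S_r=\Delta G_m$, $L_rS_r=P$, while $\psi_\alpha$ is trivial on $\Delta G_m$ and $\Theta$ restricts on $U$ to $u(x)\mapsto\theta(\operatorname{Tr}(x))$.

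\emph{From a Shalika model to $(L_r,\psi_\alpha)$-distinction.} Given a nonzero Shalika functional $\Lambda$ on $\rho$, one averages it over $G_m\cong L_r/\Delta G_m$ against $\alpha^{-1}\circ\det$ to produce an $(L_r,\psi_\alpha)$-functional, the equivariance being supplied by the identity $\operatorname{diag}(g_1,g_2)=\operatorname{diag}(g_2,g_2)\operatorname{diag}(g_2^{-1}g_1,I_m)$; this is the construction of Section 6.1 of \cite{JR}. The only point where a non-unitary $\alpha$ might interfere is convergence, but since $\rho$ is cuspidal the function $g\mapsto\Lambda(\rho(\operatorname{diag}(g,I_m))v)$ is compactly supported on $G_m$, so the integral converges absolutely for every smooth character $\alpha$, and non-vanishing is checked as in [loc. cit.].

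\emph{From $(L_r,\psi_\alpha)$-distinction to a Shalika model.} Here I would apply the Bernstein-Zelevinsky geometric lemma \cite{BZ} to $\rho|_P$ relative to the abelian group $U$: $\rho|_P$ carries a finite $P$-filtration whose subquotients are indexed by the $L_r$-orbits on $\widehat U\cong\mathcal{M}_m$, that is by the rank $j\in\{0,\dots,m\}$ of a representative $y_j$, the $j$-th subquotient being $\operatorname{ind}_{P_{y_j}}^{P}(\psi_{y_j}\otimes\rho_{U,\psi_{y_j}})$ with $P_{y_j}=\operatorname{Stab}_{L_r}(y_j)\ltimes U$ and $\rho_{U,\psi_{y_j}}$ the $\psi_{y_j}$-twisted Jacquet module of $\rho$. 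One restricts this filtration to $L_r$ and applies $\operatorname{Hom}_{L_r}(-,\psi_\alpha)$. The term $j=0$ gives the Jacquet module $r_U(\rho)$, which vanishes by cuspidality. For $0<j<m$ the stabilizer $\operatorname{Stab}_{L_r}(y_j)$ has a nontrivial unipotent radical $N_j$ on which $\psi_\alpha$ (like $\Theta$) is trivial; taking $N_j$-coinvariants, an exact operation, leaves $\rho_{V_j,\widetilde{\psi_j}}$, where $V_j=N_j\ltimes U$ is, after conjugation, the unipotent radical of a proper parabolic of $G_{2m}$ and $\widetilde{\psi_j}$ a \emph{degenerate} character of it; such a twisted Jacquet module of the cuspidal $\rho$ vanishes, so this term contributes nothing --- neither to $\operatorname{Hom}_{L_r}$ nor to the relevant $\operatorname{Ext}^1_{L_r}$. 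Only the open orbit $j=m$ survives: there $\operatorname{Stab}_{L_r}(I_m)=\Delta G_m$, $P_{I_m}=S_r$, and $\tau_\rho:=\rho_{U,\theta\circ\operatorname{Tr}}$ is a smooth representation of $\Delta G_m\cong G_m$, so that, by Mackey theory ($L_rS_r=P$, $L_r\cap S_r=\Delta G_m$) and Frobenius reciprocity, and because all the modular characters occurring are trivial on $\Delta G_m$,
$$\operatorname{Hom}_{L_r}(\rho,\psi_\alpha)\;\cong\;\operatorname{Hom}_{\Delta G_m}\bigl(\tau_\rho,\psi_\alpha|_{\Delta G_m}\bigr)\;=\;\operatorname{Hom}_{G_m}(\tau_\rho,\1),$$
the character $\alpha$ having disappeared. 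As $\operatorname{Hom}_{S_r}(\rho,\Theta)\cong\operatorname{Hom}_{G_m}(\tau_\rho,\1)$ is nothing but the standard description of the Shalika model through the twisted Jacquet module, it follows that $\rho$ is $(L_r,\psi_\alpha)$-distinguished if and only if it has a Shalika model, for every $\alpha$.

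The main obstacle is the vanishing of the subquotients with $0<j<m$: one must verify that, after taking $N_j$-coinvariants, the module governing such a term is genuinely a Jacquet module of $\rho$ along a \emph{proper} parabolic of $G_{2m}$ --- equivalently, that the degenerate twisted Jacquet modules $\rho_{U,\psi_{y_j}}$ of a cuspidal representation vanish --- and to carry out the bookkeeping of modular characters carefully enough to see that none of the surviving identifications, in particular the one attached to the open orbit, depends on $\alpha$; it is exactly this last independence that lets the non-unitary case go through just as the previously known unitary one.
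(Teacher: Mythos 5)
Your approach is genuinely different from the paper's, and if completed it would yield the stronger statement that $\dim\operatorname{Hom}_{L_r}(\rho,\psi_\alpha)=\dim\operatorname{Hom}_{S_r}(\rho,\Theta)$ for every $\alpha$, which would also subsume Corollary 2.1. The paper proves the nontrivial direction by invoking Delorme's Theorem 4.4(ii): for a cuspidal $(L_r,\psi_\alpha)$-distinguished $\rho$, the relative coefficients $g\mapsto L(\rho(g)v)$ lie in $\sm_c(L_r\backslash G_r,\psi_\alpha)$, so that the Fourier integral $S_L(v)=\int_{M_m}L(\rho(u(x))v)\theta^{-1}(\operatorname{Tr}x)\,dx$ makes sense, and its nonvanishing is extracted by translating $v$ by $\operatorname{diag}(g,I_m)$ and using smoothness of $f_{L,v}$ together with the density of $G_m$ in $M_m$. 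Your proposal replaces this analytic input by a Bernstein--Zelevinsky/Mackey filtration of $\rho|_P$ along the $L_r$-orbits of $\widehat U$.

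The genuine gap is the one you flag but do not close: the vanishing of the intermediate-orbit contributions $0<j<m$. This is a real lemma, and your phrase ``equivalently, that the degenerate twisted Jacquet modules $\rho_{U,\psi_{y_j}}$ of a cuspidal representation vanish'' is misleading: that statement is strictly stronger than the needed vanishing of $\rho_{V_j,\widetilde\psi_j}=(\rho_{U,\psi_{y_j}})_{N_j}$, and neither is proved. The weaker vanishing is true and can be shown directly: permuting the blocks to the order (second block of $G_m\times\{1\}$, first block of $G_m\times\{1\}$, first block of $\{1\}\times G_m$, second block of $\{1\}\times G_m$) carries $V_j=N_j\ltimes U$ onto the unipotent radical $U_{(m-j,j,j,m-j)}$ and carries $\widetilde\psi_j$ to $\theta$ of the trace of the $(2,3)$ block; this character is trivial on the normal subgroup $U_{(m-j,2j,m-j)}$, which for $0<j<m$ is the unipotent radical of a proper parabolic, so cuspidality forces $\rho_{V_j,\widetilde\psi_j}=0$. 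Without such a computation your proof is incomplete, and one must also decide whether the strata of the filtration appear as subobjects or quotients before the $\operatorname{Ext}^1$ remark can be made precise. Finally, in the Shalika~$\Rightarrow$~linear direction, your claim that $g\mapsto\Lambda(\rho(\operatorname{diag}(g,I_m))v)$ is compactly supported on $G_m$ for cuspidal $\rho$ is asserted without justification; the paper uses instead the asymptotic expansion of Jacquet--Rallis and the resulting convergence (Proposition 2.6), which is what is actually needed.
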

\begin{proof}
We work with $L_r$ rather than $H_r$. Of course if $\rho$ is $(H_r,\chi_\alpha)$-distinguished, then it is $(L_r,\psi_\alpha)$-distinguished, so we take a nonzero element $L$ in 
$Hom_{L_r}(\rho,\psi_\alpha)$. From \cite{D}, Theorem 4.4, (ii) (see \cite{KT-cusp} for $\alpha=\1$), we know that for any $v$ 
in the space of $\rho$, the relative coefficient $$\psi_{L,v}:g\in G_r\mapsto L(\rho(g)v)$$ belongs to 
$\sm_c(L_r \backslash G_r, \psi_\alpha)$, and $v\mapsto \psi_{L,v}$ is a $G_r$-module injection of $\rho$ in $\sm_c(L_r \backslash G_r, \psi_\alpha)$. We set $m=r/2$. Using the Iwasawa decomposition 
$L_rU_{(m,m)}K_r$, we see that the map $$f_{L,v}:x\mapsto \psi_{L,v}(u(x))$$ belongs to $\sm_c(\M_{m})$. 
 We denote by $S_L$ the linear form on
$\delta$, defined by 
\begin{equation}\label{S} S_L(v)=\int_{x\in \M_{m}} f_{L,v}(x)\theta^{-1}(Tr(x))dx\end{equation} for $v\in \rho$. We claim that 
$S_L$ is a nonzero Shalika functional on 
$\rho$. Indeed, $S_L(v)=0$ for all $v$ in $\rho$ means that $\int_{x\in \M_{m}} f_{L,v}(x)\theta^{-1}(Tr(x))dx$ for all $v$ in $\rho$. Replacing $v$ by $diag(g,I_r)$ for 
$g\in G_r$, we deduce that 
$$\alpha(det(g))\int_{x\in \M_{m}} f_{L,v}(g^{-1}x)\theta^{-1}(Tr(x))dx$$
$$=|det(g)|^{m}\alpha(det(g))\int_{x\in \M_{m}}\!\!\! f_{L,v}(x)\theta^{-1}(Tr(gx))dx=0$$ for all $g\in G_r$. Hence the Fourier 
transform of $f_{L,v}$ is zero on $G_{m}$, hence on $\M_{m}$ by smoothness of $f_{L,v}$ and density of $G_{m}$ in  $\M_{m}$. 
So the map $f_{L,v}$ must be zero, 
and $f_{L,v}(I_r)=L(v)=0$, and this for any $v\in \rho$. It follows from simple change of variable in (\ref{S}) that $S_L$ is $\Theta$-invariant under $S_r$.

\end{proof}

\begin{cor}[of the proof]\label{cormult1}
Let $\rho$ be a cuspidal representation of $G_r$, then $Hom_{H_r}(\rho,\chi_\alpha)$ is of dimension at most $1$.
\end{cor}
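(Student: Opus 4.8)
The statement is a corollary of the \emph{proof} of Proposition \ref{cusp} rather than of its statement, and the plan is to observe that the construction carried out there produces an \emph{injective} linear map into a space already known to be at most one-dimensional. Since $H_r=w_r^{-1}L_rw_r$ with $w_r\in G_r$ and $\chi_\alpha(h(g_1,g_2))=\psi_\alpha(diag(g_1,g_2))$, conjugation by $\rho(w_r)$ yields a linear isomorphism $Hom_{H_r}(\rho,\chi_\alpha)\simeq Hom_{L_r}(\rho,\psi_\alpha)$, so I would argue with the latter space. To each $L\in Hom_{L_r}(\rho,\psi_\alpha)$ the proof of Proposition \ref{cusp} attaches the relative coefficients $\psi_{L,v}\in\sm_c(L_r\backslash G_r,\psi_\alpha)$, the functions $f_{L,v}\colon x\mapsto\psi_{L,v}(u(x))$ lying in $\sm_c(\M_m)$, and the functional $S_L$ defined by (\ref{S}); all of these are visibly linear in $L$, and the proof shows that $S_L\in Hom_{S_r}(\rho,\Theta)$. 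Hence $L\mapsto S_L$ is a linear map $Hom_{L_r}(\rho,\psi_\alpha)\to Hom_{S_r}(\rho,\Theta)$.

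The key point is that this map is injective, which is precisely the Fourier-analytic computation already made inside the proof of Proposition \ref{cusp}: if $S_L=0$, then by the change of variables in (\ref{S}) the Fourier transform of $f_{L,v}$ vanishes on $G_m$, hence on all of $\M_m$ by density of $G_m$ in $\M_m$, so $f_{L,v}\equiv 0$; evaluating at $x=0$ gives $L(v)=f_{L,v}(0)=0$, and since $v$ was arbitrary, $L=0$. Combining this injection with the well-known multiplicity one property of Shalika models, $\dim Hom_{S_r}(\rho,\Theta)\leq 1$, we conclude $\dim Hom_{H_r}(\rho,\chi_\alpha)=\dim Hom_{L_r}(\rho,\psi_\alpha)\leq 1$.

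I do not expect any real obstacle: the whole argument is a rereading of the proof of Proposition \ref{cusp}, the only genuinely external input being the uniqueness of Shalika functionals, which is insensitive to the character $\alpha$. That insensitivity is exactly what makes the corollary non-formal --- multiplicity one for $Hom_{L_r}(\rho,\psi_\alpha)$ is classical when $\psi_\alpha$ is trivial (uniqueness of linear periods), and the Shalika functional furnishes the $\alpha$-independent bridge transporting it to an arbitrary character $\alpha$.
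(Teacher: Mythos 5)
Your proposal is correct and follows essentially the same route as the paper: the paper's proof of Corollary~\ref{cormult1} is precisely the one-line observation that the map $L\mapsto S_L$ constructed in the proof of Proposition~\ref{cusp} is injective, combined with uniqueness of Shalika functionals (\cite{JR}, Proposition 6.1). The only thing you add is the explicit remark that conjugation by $w_r$ identifies $Hom_{H_r}(\rho,\chi_\alpha)$ with $Hom_{L_r}(\rho,\psi_\alpha)$, which the paper leaves implicit (and you correct a minor notational slip by evaluating $f_{L,v}$ at $0$ rather than at $I_r$).
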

\begin{proof}
We saw that the map $L\mapsto S_L$ in the proof above is injective. Our claim now follows from the uniqueness of Shalika functionals for irreducible representations of $G_r$ (\cite{JR}, Proposition 6.1).
\end{proof}

\begin{rem}\label{dif}
It is not clear that Theorem \ref{cusp} extends easily to discrete series as in \cite{M14}, for $\alpha \neq \mathbf{1}$. 
Indeed,the relative coefficients are not in $L^2(H_n\backslash G_n)$ as soon as $\alpha$ is not unitary
 (in fact if $C$ is an $(H_n,\chi_\alpha)$-relative coefficient, with $\alpha$ not unitary, then $|C|^2$ is not a map on $H_n\backslash G_n$).
\end{rem}

Before we study the case of discrete series, we also need to clear up a misunderstanding in Theorem 3.1 of \cite{M14}. In the statement 
of this theorem, the integer $n$ must be $\geq 2$ (it is also tacitly assumed in its proof). Indeed, for $n=1$, we have $H_1=G_1$, all characters of 
$G_1$ are cuspidal, so a character $\mu$ of $G_1$ can by $(H_1,\chi)$-distinguished, and this if and only if $\mu=\chi$.

\begin{prop}\label{intermediaire}
Suppose that $\rho$ is a cuspidal representation of $G_r$ ($r\geq 1$), $d$ is a positive integer, and set $n=dr$. Let 
$$\pi=\nu^{(1-d)/2}\rho\times \dots \times \nu^{(d-1)/2}\rho.$$ 
Then $dim_\C(Hom_{H_n}(\pi,\chi_\alpha))\leq 1$, and if $d$ is odd, $dim(Hom_{H_n}(\pi,\chi_\alpha))= 1$ implies 
that $\rho$ is $(H_r,\chi_\alpha)$-distinguished (in particular $r$ is even except if $r=1$, in which case $\rho=\alpha$). 
\end{prop}
\begin{proof}
The reader would benefit from reading Section 3 of \cite{Mcrelle} until the discussion before
 Theorem 3.14 of [ibid.] before reading this proof. We suppose that $\pi$ is distinguished. In particular, as $Z_n\subset H_n$, the central character $\omega_\rho$ is of order $d$, and 
$\rho$ is unitary. We set $M=M_{(r,\dots,r)}$ the standard Levi subgroup of $G=G_n$ such that  
$R=\nu^{(1-d)/2}\rho\otimes \dots \otimes \nu^{(d-1)/2}\rho$ is a representation of $M$, and $P=P_{(r,\dots,r)}$ the corresponding 
standard parabolic subgroup of $G$. We also set $H=H_n$, and $\chi=\chi_\alpha$. A system of representatives $R(P\backslash G/H)$ of 
the double quotient $P\backslash G/H$ 
is determined in Section 3.1 of \cite{Mcrelle}. To every $s$ in $R(P\backslash G/H)$, a standard parabolic subgroup 
$P_s\subset P$ of $G$, and its standard Levi subgroup $M_s$ is 
associated. We denote by $H_{M_s}$ the intersection $M\cap sHs^{-1}$, and by $\chi_s$ the character 
$\chi(s^{-1}.s)$ of $H_{M_s}$. Then, in the discussion before Theorem 3.2 of [loc. cit.], for 
any $(H,\chi)$-invariant linear $L$ form on $\pi$, and any $s$ in $R(P\backslash G/H)$, an $(H_{M_s},\chi_s)$-linear form $L_s$ is 
defined on 
the normalised Jacquet module $r_{M_s}^M R$, with the property that if $L_s$ is zero for every $s$ in $R(P\backslash G/H)$, then $L$ is zero. 
In particular, if $\pi$ is $(H,\chi)$-distinguished, and $L$ is a nonzero linear form in $Hom_H(\pi,\chi)$, then $L_s$ is nonzero for at least one $s$. As the 
representation $R$ of $M$ is cuspidal, this implies first that $M_s=M$. Thanks to Section 3.2 of \cite{Mcrelle}, it also implies 
that there are $l$ disjoint couples $i_k<j_k$ ($k=1,\dots,l$ with $2l\leq d$) in $\{1,\dots,d\}$, and natural 
integers $n_i^-$ such $n_i^+$ with $r=n_i^-+n_i^+$ for each $i\in I=\{1,\dots,d\}-\cup_{k=1}^l \{i_k,j_l\}$, such that 
$H_{M_s}$ is of the form $$\{diag(g_1,\dots,g_d)\in M, g_{i_k}=g_{j_k} \ for \ k=1,\dots,l, \ and \ g_i\in M_{(n_i^-,n_i^+)} \ for \ 
i \in I\}.$$ Moreover, thanks to Theorem 3.1 of \cite{M14}, when $r\geq 2$, for every $i\in I$, we must have $n_i^-=n_i^+$. As 
$R=\nu^{(1-d)/2}\rho\otimes \dots \otimes \nu^{(d-1)/2}\rho$ 
is $(H_{M_s},\chi_s)$-distinguished, and as $\rho$ is unitary, this implies that $l=d$ when $l$ is even, and $i_k=k$ and $j_k=d+1-k$ 
for each $k\in\{1,\dots,d/2\}$, and $l=d-1$ when $l$ is odd, $i_k=k$ and $j_k=d+1-k$ 
for each $k\in\{1,\dots,(d-1)/2\}$. Moreover if $l$ is odd and $r\geq 2$, then $r$ is even and 
$\rho$ is $(M_{(r/2,r/2)},\mu_\alpha)$-distinguished (i.e. $(H_r,\chi_\alpha)$-distinguished), and when $r=1$, then $\rho=\alpha$. 
In particular, in all cases, 
this implies that there is a unique $s\in R(P\backslash G/H)$ such that $L_s$ is nonzero, thus the map $L\mapsto L_s$ is injective. 
When $d$ is odd, then $L_s$ then lives in a $1$-dimensional space thanks to Corollary \ref{cormult1}, whereas when $d$ 
is even, the same assertion follows form Schur's Lemma. This proves the result.
\end{proof}

\begin{cor}\label{mult1}
Let $c$ be a positive integer, and $r$ be a positive integer. Let $\rho$ be a cuspidal representation of $G_r$, and 
$\d'=\delta(c,\rho)$. Then the representation $\tau=\nu^{-c/2}\d'\times\nu^{c/2}\d'$ satisfies 
$dim_\C(Hom_{H_n}(\tau,\chi_\alpha))\leq 1$.
\end{cor}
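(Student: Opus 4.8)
The plan is to exhibit $\tau$ as a quotient of the induced representation studied in Proposition \ref{intermediaire}, so that the bound becomes a formal consequence of that proposition.

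First I would recall, from Definition \ref{dsicreteseries}, that $\d'=\d(c,\rho)$ is the unique irreducible quotient of an induced representation
\[
\sigma=\nu^{\beta}\rho\times\nu^{\beta+1}\rho\times\cdots\times\nu^{\beta+c-1}\rho ,
\]
a product of $c$ consecutive unramified twists of $\rho$ (the value of $\beta$ being irrelevant in what follows). Twisting, and using that normalised parabolic induction is exact, the surjections $\nu^{-c/2}\sigma\twoheadrightarrow\nu^{-c/2}\d'$ and $\nu^{c/2}\sigma\twoheadrightarrow\nu^{c/2}\d'$ induce a $G_n$-equivariant surjection
\[
\pi':=\nu^{-c/2}\sigma\times\nu^{c/2}\sigma\;\twoheadrightarrow\;\nu^{-c/2}\d'\times\nu^{c/2}\d'=\tau .
\]

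Next, by associativity of the product $\times$, the representation $\pi'$ is just $\nu^{\beta-c/2}\rho\times\nu^{\beta-c/2+1}\rho\times\cdots\times\nu^{\beta+3c/2-1}\rho$, that is, a product of $2c$ consecutive unramified twists of $\rho$; after replacing $\rho$ by a suitable unramified twist $\rho_0$ (again a cuspidal representation of $G_r$), this is exactly the representation denoted $\pi$ in Proposition \ref{intermediaire}, for the cuspidal representation $\rho_0$ and the integer $d=2c$ (so that the ambient group is $G_{2cr}=G_n$). Since $d=2c$ is even, that proposition gives $\dim_\C Hom_{H_n}(\pi',\chi_\alpha)\le 1$ (we only need its multiplicity-one assertion, not the second one).

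Finally I would apply the left-exact contravariant functor $Hom_{H_n}(\,\cdot\,,\chi_\alpha)$ to the surjection $\pi'\twoheadrightarrow\tau$: this produces an injection $Hom_{H_n}(\tau,\chi_\alpha)\hookrightarrow Hom_{H_n}(\pi',\chi_\alpha)$, whence $\dim_\C Hom_{H_n}(\tau,\chi_\alpha)\le 1$. The argument is essentially formal; the only mildly delicate point is the bookkeeping identifying $\nu^{-c/2}\sigma\times\nu^{c/2}\sigma$ with the representation $\pi$ of Proposition \ref{intermediaire}, in particular checking that the two twisted segments glue into a single run of $2c$ consecutive twists of $\rho$, which is immediate from the definition of $\sigma$. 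I therefore do not anticipate any genuine obstacle, in keeping with the result being stated as a corollary.
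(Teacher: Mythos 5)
Your proof is correct and follows the same route as the paper: exhibit $\tau$ as a quotient of $\pi=\nu^{(1-d)/2}\rho\times\cdots\times\nu^{(d-1)/2}\rho$ with $d=2c$ and invoke the multiplicity-one assertion of Proposition \ref{intermediaire} together with left-exactness of $Hom_{H_n}(\cdot,\chi_\alpha)$. The only difference is cosmetic: you hedge by allowing an unramified twist $\rho_0$ of $\rho$, but the bookkeeping in fact lands exactly on $\pi$ with $\rho_0=\rho$, which is why the paper dispatches the corollary in one line.
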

\begin{proof}
It is obvious, because setting $d=2c$, the representation 
$\tau$ is a quotient of $\pi=\nu^{(1-d)/2}\rho\times \dots \times \nu^{(d-1)/2}\rho$. The result now follows from Proposition 
\ref{intermediaire}.
\end{proof}

Let $n=n_1+\dots+n_t$, $\pi_i$ be an irreducible representation of $G_{n_i}$ for every $i$, and let 
$\pi=\pi_1\times \dots \times \pi_t$. According to 
Chapter 3 of \cite{GJ}, there is $r_\pi\in \R$ such that for any coefficient $f$ of $\pi$ (i.e. a map of the form $g\in G_n \mapsto <v^\vee,\pi(g)v>$ for $v\in V$ and 
$v^\vee \in V^\vee$), and any $\Phi\in \sm_c(\M_n)$, the integral $Z(s,\Phi,f)=\int_{G_n}f(g)\Phi(g)\nu(g)^s dg$ 
converges absolutely for $Re(s)>r_\pi$. These zeta integrals in fact belong to $\C(q^{-s})$, and
 span a fractional ideal of $\C[q^{\pm s}]$ containing $1$. One denotes by $L(s,\pi)$ the unique Euler factor which is a generator of this ideal, it satisfies the relation $L(s,\pi)=\prod_{i=1}^t L(s,\pi_i)$. Notice that 
this notation is coherent with that of Section \ref{galoisfactors}, as it is proved in Section 5 of \cite{JPS} that if $\pi$ 
is irreducible, the Godement-Jacquet factor $L(s,\pi)$ and the Rankin-Selberg factor $L(s,\pi,\1)$ are equal.   We now recall the following result from \cite{FJ}. 

\begin{prop}\label{implication1}
Let $2m=n=n_1+\dots+n_t$ be an even integer, and $\pi_i$ be an irreducible representation of $G_{n_i}$ for every $i$. If 
$\pi=\pi_1\times \dots \times \pi_t$ is such that $Hom_{S_n}(\pi,\theta)\neq 0$. Take a nonzero element $L$ 
of $Hom_{S_n}(\pi,\theta)$, and denote by $S_L(\pi,\Theta)$ the space of maps from $G_n$ to $\C$ of the form $S_{L,v}:g\mapsto L(\pi(g)v)$ for $v\in V$.  
Then there is $r\in \R$, such that for any $S$ in $S_L(\pi,\Theta)$ the integral $$I(s,\alpha,S)=\int_{G_m}S(diag(g,I_m))\alpha(det(g))\nu(g)^sdg$$ is absolutely convergent for $Re(s)>r$. Moreover, these integrals in fact belong to $\C(q^{-s})$, and
 span a fractional ideal of $\C[q^{\pm s}]$ equal to $L(s+1/2,\pi\otimes \alpha)\C[q^{\pm s}]$. In particular, 
the map $$\Lambda_{\pi,L}: v\mapsto I(0,\alpha,S_{L,v})/L(1/2,\pi\otimes\alpha)$$ is a nonzero element of $Hom_{L_n}(\pi,\psi_\alpha)$.
\end{prop}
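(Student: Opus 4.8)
The plan is to treat $I(s,\alpha,S)$ as a Rankin--Selberg type zeta integral attached to the Shalika model of $\pi$, and, following \cite{FJ}, to establish in turn: (a) absolute convergence for $Re(s)$ large, with a threshold $r$ depending only on $\pi$; (b) rationality in $q^{-s}$; (c) that the $\C[q^{\pm s}]$-module spanned by the $I(s,\alpha,S)$ equals $L(s+1/2,\pi\otimes\alpha)\C[q^{\pm s}]$; and then the statement on $\Lambda_{\pi,L}$, which is formal. Recall that $S_L(\pi,\Theta)$ realizes the relevant quotient of $\pi$ inside the space of functions $f$ on $G_n$ with $f(s_0g)=\Theta(s_0)f(g)$ for $s_0\in S_n$; in particular $S(diag(h,h)g)=S(g)$ for every $h\in G_m$, a fact I will use repeatedly.

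For (a) and (b), I would analyse the restriction of $S\in S_L(\pi,\Theta)$ to $\{diag(g,I_m):g\in G_m\}$, equivalently to the mirabolic $P_{2m}$ (note $diag(g,I_m)$ has last row $(0,\dots,0,1)$). The key input is an asymptotic expansion of these restricted Shalika functions, the analogue for the Shalika model of Proposition \ref{DL} for the Whittaker model: since $\pi$ has finite length, so do its derivatives, and an analysis of $S|_{P_{2m}}$ through the Bernstein--Zelevinsky filtration shows that, along the diagonal torus $A_m$ and up to bi-$K_m$-finite data, $g\mapsto S(diag(g,I_m))$ is a finite sum of terms $c(a)\cdot(\text{modulus factor})\cdot(\text{polynomial in the valuations of }a)\cdot\phi(a)$, where $c$ ranges over finitely many characters of $A_m$ determined by the central characters of the derivatives of $\pi$ (built from those of the $\pi_i$ by the Leibniz rule), and $\phi$ is a compactly supported factor forcing support in a cone and decay. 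Integrating this against $\alpha\nu^s$ yields absolute convergence for $Re(s)>r$ with $r$ uniform in $S$, while termwise Mellin summation --- or Bernstein's principle of meromorphic continuation applied to the family $S\mapsto I(s,\alpha,S)$ --- shows $I(s,\alpha,S)\in\C(q^{-s})$.

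For (c), the inclusion $I(s,\alpha,S)\in L(s+1/2,\pi\otimes\alpha)\C[q^{\pm s}]$ is read off from the same expansion: the poles of the continuation of $I(s,\alpha,S)$, with their orders, are controlled by the exponents $c$ surviving the integration, which are precisely the data recorded by $L(s+1/2,\pi\otimes\alpha)=\prod_iL(s+1/2,\pi_i\otimes\alpha)$ (using $\phi_\pi=\oplus_i\phi_{\pi_i}$ and the identification of the leading derivatives' central characters with the parameters of the $L(s+1/2,\pi_i\otimes\alpha)$). For the reverse inclusion one must produce a vector $v$ with $I(s,\alpha,S_{L,v})$ equal to $L(s+1/2,\pi\otimes\alpha)$ up to a unit of $\C[q^{\pm s}]$: in the unramified case one takes $v$ spherical and, via a Shalika analogue of the Casselman--Shalika formula for $S_{L,v}$ on the torus, watches the Mellin sum against $\alpha\nu^s$ collapse --- by a Cauchy-type identity --- to $L(s+1/2,\pi\otimes\alpha)$; in general one relates $I(s,\alpha,S)$ to a Godement--Jacquet integral $Z(s',\Phi,f)$ for $\pi\otimes\alpha$ by unfolding $Z$ along the big cell $\overline{U}_{(m,m)}P_{(m,m)}$ and integrating over the parabolic part against $\Theta$, so that, for suitable $\Phi$ and contragredient vector, a $Z$ realizing $L(s+1/2,\pi\otimes\alpha)$ becomes a nonzero scalar multiple of some $I(s,\alpha,S_{L,v})$. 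The two inclusions give (c). This step --- controlling the orders of the poles uniformly and simultaneously realizing the $L$-factor --- is the technical heart of \cite{FJ} and the main obstacle.

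Finally, the statement on $\Lambda_{\pi,L}$ is formal. By (c), for each $v$ the function $s\mapsto I(s,\alpha,S_{L,v})/L(s+1/2,\pi\otimes\alpha)$ is a Laurent polynomial in $q^{-s}$, hence regular at $s=0$, and its value there is by definition $\Lambda_{\pi,L}(v)$ (so $\Lambda_{\pi,L}$ is well-defined even when $L(1/2,\pi\otimes\alpha)$ has a pole); moreover by (c) some $v_0$ makes this ratio a unit of $\C[q^{\pm s}]$, hence nonzero at $s=0$, so $\Lambda_{\pi,L}\neq0$. For the equivariance, let $h=diag(a,b)\in L_n$; using $\Theta(diag(b,b))=1$ one has $$S_{L,\pi(h)v}(diag(g,I_m))=S_{L,v}(diag(ga,b))=S_{L,v}\bigl(diag(b,b)\,diag(b^{-1}ga,I_m)\bigr)=S_{L,v}(diag(b^{-1}ga,I_m)),$$ and the substitution $g\mapsto bga^{-1}$ in the integral turns $I(s,\alpha,S_{L,\pi(h)v})$ into $I(s,\alpha,S_{L,v})$ times an explicit $s$-dependent character of $h$ whose value at $s=0$ is exactly the one required for membership in $Hom_{L_n}(\pi,\psi_\alpha)$; evaluating at $s=0$ therefore shows $\Lambda_{\pi,L}\in Hom_{L_n}(\pi,\psi_\alpha)$, completing the proof.
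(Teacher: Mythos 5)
Your overall route is the one the paper follows: an asymptotic expansion of the Shalika function along the torus yields absolute convergence for $Re(s)$ large and rationality in $q^{-s}$, and the computation of the fractional ideal is the Friedberg--Jacquet comparison with Godement--Jacquet zeta integrals. The paper's own proof is essentially two citations: Theorem 6.1 of \cite{JR} for the expansion (together with the remark that the argument there, stated for irreducible $\pi$, goes through for $\pi$ of finite length because its Jacquet modules have finite length --- a point you sidestep by appealing to derivatives, which is a plausible but different mechanism from the one in \cite{JR}), and Proposition 4.2 of \cite{M14} for everything after convergence.

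The genuine soft spot is your step (c). For the inclusion $I(s,\alpha,S)\in L(s+1/2,\pi\otimes\alpha)\C[q^{\pm s}]$ you propose to ``read off'' the poles and their orders from the exponents of the asymptotic expansion and to identify these with the data of $L(s+1/2,\pi\otimes\alpha)$. The expansion only gives an a priori \emph{upper bound} on the poles in terms of exponents of Jacquet modules; there is no direct identification of those exponents, with multiplicities, with the inverse roots of the standard $L$-factor, and such an identification is essentially equivalent to what is being proved. In \cite{FJ} and its $p$-adic adaptation (\cite{M14}, Prop.\ 4.2, which is exactly what the paper cites here) \emph{both} inclusions come from the identity expressing $I(s,\alpha,S_{L,v})$ as a Godement--Jacquet integral $Z(s+1/2,\Phi,f)$ for $\pi\otimes\alpha$ and conversely; you invoke that identity only for the reverse inclusion, and only programmatically, so the step you yourself flag as ``the main obstacle'' is not actually overcome. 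A smaller point: carrying out your equivariance computation gives, at $s=0$, the factor $\alpha(\det b)/\alpha(\det a)=\psi_\alpha(diag(a,b))^{-1}$ rather than $\psi_\alpha(diag(a,b))$, so the assertion that the resulting character is ``exactly the one required'' conceals a sign that must be reconciled with the paper's conventions (it is harmless for the applications, where $\alpha$ ranges over all characters, but it is not nothing in a proof of the stated proposition).
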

\begin{proof}
The assumptions in \cite{FJ} are that $\pi$ is unitary and irreducible, and the proof is given in the archimedean case. 
However, we adapted their arguments to the $p$-adic case in \cite{M14}, but the absolute convergence of $I(S,s,\alpha)$ 
was not shown in this reference, because we were dealing with linear forms invariant under $S_n\cap P_n$ rather than $S_n$, hence we couldn't use the asymptotic expansion of Shalika functions from \cite{JR}. We thus give a 
proof for the case at hand, referring to \cite{M14} and \cite{JR}, where only minor modifications are needed. 

Let $S$ belong to $S_L(\pi,\Theta)$. First, an asymptotic expansion of the restriction of $S$ to the torus $A_n$ 
is given in \cite{JR}, Theorem 6.1. Their proof assumes $\pi$ irreducible, but all the arguments work for $\pi$ of finite length (as
 the Jacquet modules of $\pi$ are also of finite length). Hence, we get the absolute convergence of $I(s,\alpha,S)$, for 
$Re(s)$ larger than a certain real number $r$ independent of $S$, as in \cite{JR} after Lemma 6.1. We can now write for $Re(s)>r$, the equality $I(s,\alpha,S)=\sum_{k\in \Z} c_k(\alpha, S) q^{-ks}$, where $c_k(\alpha,S)=\int_{|g|=q^{-k}} S(diag(g,I_m))\alpha(det(g))dg$. The statement now follows from Proposition 4.2 of \cite{M14}, which is for $\alpha=\1$, but valid for any $\alpha$.
\end{proof}

We are now able to prove the main result of this section.

\begin{thm}\label{main}
Let $n=dr>0$ be an even integer, and $\rho$ be a cuspidal representation of $G_r$. The representation $\delta(d,\rho)$ is $(H_n,\chi_\alpha)$-distinguished if and only if it 
is $(S_n,\Theta)$-distinguished.
\end{thm}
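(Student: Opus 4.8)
The plan is to treat the two implications separately. For ``$(S_n,\Theta)$-distinguished $\Rightarrow (H_n,\chi_\alpha)$-distinguished'' I would simply invoke Proposition \ref{implication1} with $t=1$ and the single irreducible factor $\pi_1=\pi=\d(d,\rho)$: a nonzero $L\in Hom_{S_n}(\d(d,\rho),\Theta)$ yields the nonzero functional $\Lambda_{\pi,L}\in Hom_{L_n}(\d(d,\rho),\psi_\alpha)$, hence $Hom_{H_n}(\d(d,\rho),\chi_\alpha)\neq 0$ after conjugating by $w_n$. Only irreducibility of $\d(d,\rho)$ is used here.

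For the converse, suppose $\d=\d(d,\rho)$ is $(H_n,\chi_\alpha)$-distinguished and write $\pi=\nu^{(1-d)/2}\rho\times\dots\times\nu^{(d-1)/2}\rho$, so that $\d$ is the unique irreducible quotient of $\pi$ and hence $\pi$ is $(H_n,\chi_\alpha)$-distinguished as well; since $Z_n\subset H_n$ and $\chi_\alpha$ is trivial on $Z_n$, the central character of $\d$ is trivial, so $\rho$ is unitary with $\omega_\rho^d=\1$, exactly as in the proof of Proposition \ref{intermediaire}. I would then split according to the parity of $d$, one of $d$ and $r$ being even since $n=dr$ is even.

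When $d$ is odd, $r$ is even and the double-coset analysis of Proposition \ref{intermediaire} applies: its unpaired central factor, namely $\rho$ itself (exponent $0$), forces $\rho$ to be $(H_r,\chi_\alpha)$-distinguished, hence by Proposition \ref{cusp} $\rho$ admits a Shalika model. It then only remains to propagate the Shalika model from $\rho$ up to $\d(d,\rho)$, which for $d$ odd is part of the known classification of discrete series admitting a Shalika model; on Galois parameters, $\phi_{\d(d,\rho)}$ is $\phi_\rho$ tensored with the $d$-dimensional special Weil--Deligne representation, and for $d$ odd $L(s,\Lambda^2\circ\phi_{\d(d,\rho)})$ has a pole at $s=0$ precisely when $L(s,\Lambda^2\circ\phi_\rho)$ does, i.e. precisely when $\rho$ has a Shalika model.

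When $d=2c$ is even there is no unpaired factor: the indices in the analysis of Proposition \ref{intermediaire} pair off completely symmetrically, and because $\chi_\alpha$ is trivial on the diagonally embedded copies of $G_r$ occurring in the stabilizers, the analysis only yields $\rho\cong\rho^\vee$ (equivalently, via Corollary \ref{mult1}, $\d(2c,\rho)$ is a quotient of $\tau=\nu^{-c/2}\d(c,\rho)\times\nu^{c/2}\d(c,\rho)$, which is therefore $(H_n,\chi_\alpha)$-distinguished with $\dim Hom_{H_n}(\tau,\chi_\alpha)\leq 1$, and the open $H_n$-orbit forces $\d(c,\rho)\cong\d(c,\rho)^\vee$, hence again $\rho\cong\rho^\vee$). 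What remains is to upgrade this self-duality to the statement that $\rho$ is of \emph{orthogonal} type, which on parameters --- for $d$ even the pole of $L(s,\Lambda^2\circ\phi_{\d(2c,\rho)})$ at $s=0$ is governed by the symmetric square of $\phi_\rho$ --- is precisely equivalent to $\d(2c,\rho)$ having a Shalika model. I expect this last step, namely deciding the \emph{type} of the self-dual $\rho$ from the mere existence of the $(H_n,\chi_\alpha)$-period, to be the main obstacle, and it is exactly where the non-unitarity of $\alpha$ matters: square-integrability of relative coefficients is unavailable (Remark \ref{dif}), so one must carry out the descent through the non-unitary form of Proposition \ref{cusp} together with a refinement of the arguments of \cite{M14}. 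The odd case, by contrast, is essentially formal once Propositions \ref{intermediaire} and \ref{cusp} are in hand.
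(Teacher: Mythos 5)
Your first implication and your treatment of odd $d$ follow the paper's route: Proposition \ref{implication1} gives the Shalika-to-linear direction, and for $d$ odd Propositions \ref{intermediaire} and \ref{cusp} produce a Shalika model on $\rho$, which is then lifted to $\d(d,\rho)$ (the paper does this by citing Theorem 6.1 of \cite{M14}, rather than the Galois-side heuristic you sketch, but that is a matter of locating the right reference). The genuine gap is the case $d=2c$ even, which you yourself flag as ``the main obstacle'' and then leave unresolved. Reducing the period condition to the self-duality of $\rho$ and then asking whether $\rho$ is of orthogonal or symplectic type is not a proof: deciding the type of a self-dual cuspidal from the existence of an $(H_n,\chi_\alpha)$-period is essentially equivalent to the statement being proved, so your even case is circular as it stands, and no amount of ``refining the arguments of \cite{M14}'' is specified concretely enough to close it.

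The paper settles the even case without ever touching the ``type'' question, by an argument worth internalising. Set $\d'=\d(c,\rho)$ and $\tau=\nu^{-c/2}\d'\times\nu^{c/2}\d'$. By Proposition 3.8 of \cite{Mcrelle}, $\tau$ \emph{always} carries a nonzero Shalika functional $L$ (for unitary $\omega_\rho$); by Zelevinsky, $\tau$ has length $2$, with irreducible quotient $\d=\d(d,\rho)$ and unique irreducible submodule $\sigma$. If $\d$ were not $(S_n,\Theta)$-distinguished, then $L_{|\sigma}\neq 0$. Proposition \ref{implication1} converts $L$ and $L_{|\sigma}$ into linear periods $\Lambda_{\tau,L}$ and $\Lambda_{\sigma,L_{|\sigma}}$, normalised by $L(s+1/2,\cdot\otimes\alpha)$; since $L(s,\tau\otimes\alpha)=L(s,\sigma\otimes\alpha)$ by \cite{J-principal}, the form $\Lambda_{\tau,L}$ restricts on $\sigma$ to $\Lambda_{\sigma,L_{|\sigma}}\neq 0$. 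On the other hand a nonzero $(H_n,\chi_\alpha)$-form on $\d$, pulled back to $\tau$, vanishes on $\sigma$, hence is linearly independent of $\Lambda_{\tau,L}$, contradicting $\dim_\C Hom_{H_n}(\tau,\chi_\alpha)\leq 1$ (Corollary \ref{mult1}). This multiplicity-one trick on the length-two induced representation is the idea missing from your proposal.
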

\begin{proof}
First, we notice that if $\delta(d,\rho)$ is $(H_n,\chi_\alpha)$-distinguished or $(S_n,\Theta)$-distinguished, 
then its central character (which is a power of that of $\rho$) is trivial, so we assume that the $\omega_\rho$ is unitary so that 
$\d(k,\rho)$ is unitary for any $k$. By Proposition \ref{implication1}, if $\delta(d,\rho)$ is $(S_n,\Theta)$-distinguished, then it is 
$(H_n,\chi_\alpha)$-distinguished. For the converse, there are two cases to consider, which are proved differently. We thus suppose that 
$\delta(d,\rho)$ is $(H_n,\chi_\alpha)$-distinguished. 

If $d$ is odd (hence $r$ is even), as 
$\delta(d,\rho)$ is a quotient of $\nu^{(1-d)/2}\rho\times \dots \times \nu^{(d-1)/2}\rho$, then by Proposition \ref{intermediaire}, 
the representation $\rho$ is $(H_r,\chi_\alpha)$-distinguished, hence it is $(S_r,\Theta)$-distinguished according to Proposition 
\ref{cusp}. This in turn implies that $\delta(d,\rho)$ is $(S_n,\Theta)$-distinguished according to Theorem 6.1. of \cite{M14}. 
So this proves the theorem when $d$ is odd.

If $d=2c$ is even, we write $\d'=\delta(c,\rho),$ hence $\tau=\nu^{-c/2}\d'\times\nu^{c/2}\d'$ is $(S_n,\Theta)$-distinguished according to 
Proposition 3.8 of \cite{Mcrelle}, and we denote by $L$ a nonzero Shalika functional on $\tau$. It follows from Section 3 of \cite{Z}, that $\tau$ is of length $2$, with 
irreducible quotient $\delta=\delta(d,\rho)$, and a unique irreducible submodule that we denote by $\sigma$, which is the Langlands quotient of $\nu^{c/2}\d'\times\nu^{-c/2}\d'$. We suppose that the quotient $\delta$ is not $(S_n,\Theta)$-distinguished, and we will obtain a contradiction. In this situation $\sigma$ must be $(S_n,\Theta)$-distinguished, as $L_{|\sigma}$ is nonzero. Now, according to Proposition \ref{implication1}, $\Lambda_{\tau,L}$ is a nonzero linear form on $\tau$ which is $(H_n,\chi_\alpha)$-invariant, and 
$\Lambda_{\sigma,L_{|\sigma}}$ is a nonzero linear form on $\sigma$ which is $(H_n,\chi_\alpha)$-invariant. Finally, it is known 
that $L(s,\tau\otimes \alpha)=L(s,\sigma\otimes\alpha)$ (\cite{J-principal}, Theorem 3.4), hence the linear form $\Lambda_{\tau,L}$ extends $\Lambda_{\sigma,L_{|\sigma}}$, in 
particular $({\Lambda_{\tau,L}})_{|\sigma}= \Lambda_{\sigma,L_{|\sigma}}$ is nonzero. But $\d$ is $(H_n,\chi_\alpha)$-distinguished, in
particular there is a nonzero $(H_n,\chi_\alpha)$-linear form on $\d$, which can be seen as a nonzero $(H_n,\chi_\alpha)$-linear form 
on $\tau$ which vanishes on $\sigma$. This contradicts Proposition \ref{mult1}, and ends the proof. 
\end{proof}

\section{The local Bump-Friedberg $L$-function}

For simplicity, from now on, we will assume that $\alpha$ is an unramified character of the form $|.|^t$ for some $t\in \C$. This 
is the situation considered in \cite{BF}. We will denote by abuse of notation $\chi_t$ the character $\chi_{|.|^t}$ of $H_n$.

\subsection{Definition}

For $n\in \N-\{0\}$, we set $m=\lfloor (n+1)/2 \rfloor$. Let $\pi$ be a representation of $G_n$ of Whittaker type. 
Let $W$ belong to $W(\pi,\theta)$, and $\Phi$ belong to $\sm_c(F^m)$, 
$s$ and $t$ be complex numbers, and for $h(h_1,h_2)\in H_n$, we denote by $l_n(h(h_1,h_2))$ the bottom row of $h_2$ when $n$ is even, and the bottom row of $h_1$ when $n$ is odd. 
We consider the following integrals, the convergence of which will be addressed just after 
 $$\Psi(s,t,W,\Phi)=\int_{N_n^\sigma\backslash H_n}W(h)\Phi(l_n(h))\chi_{t}(h)\chi_{n}(h)^{1/2}|h|^sdh$$ and 
$$\Psi_{(0)}(s,t,W)=\int_{N_{n}^\sigma\backslash {P_n}^\sigma}W(h)\chi_{t}(h)\mu_{n}^{1/2}(h)|h|^{s-1/2}dh$$
$$=\int_{N_{n-1}^\sigma\backslash {G_{n-1}}^\sigma}W(h)\chi_{t}(h)\chi_{n-1}^{1/2}(h)|h|^{s-1/2}dh.$$

\begin{rem}
Notice the difference of notations with \cite{Mcrelle}. In [ibid.], we denote by 
$\Psi(W,\Phi,\chi_{|.|^t},s)$ the integral denoted by $\Psi(s,t,W,\Phi)$ here. 
\end{rem}

\begin{prop}\label{CV}
 Let $\pi$ be a representation of $G_n$ of Whittaker type. Let $W$ belong to 
$W(\pi,\theta)$, $\Phi$ belong to $\sm_c(F^m)$, and $\e_k=0$ when $k$ is even and $1$ when $k$ is odd.
 If for all $k$ in $\{1,\dots,n\}$ (resp. in $\{1,\dots,n-1\}$), we have $Re(s)>-[Re(c_{k,i_k})+\e_k Re(t+1/2)]/k$, the integral $\Psi(s,t,W,\phi)$ 
(resp. $\Psi_{(0)}(s,t,W)$) converges absolutely. It admits meromorphic extension to $\C\times \C$ as elements of $\C(q^{-s},q^{-t})$. 
\end{prop}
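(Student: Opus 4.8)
The plan is to reduce both integrals to integrals over the diagonal torus, where the asymptotic expansion of Whittaker functions from Proposition \ref{DL} applies, and then to estimate term by term. First I would use an Iwasawa-type decomposition adapted to the group $H_n$: since $H_n = w_n^{-1}L_n w_n$ and $L_n = G_m \times G_{\lfloor n/2\rfloor}$, one has $H_n = N_n^\sigma A_n^\sigma K_n^\sigma$ (up to the usual compactness issues), so that $\Psi(s,t,W,\Phi)$ unfolds into an integral over $A_n^\sigma \cong (F^*)^{\lceil n/2\rceil}\times\cdots$ against the Whittaker function restricted to the torus, the function $\Phi$ evaluated at $l_n$ of the torus element, the character $\chi_t\chi_n^{1/2}$, the modulus factor $|h|^s$, and a Haar-measure Jacobian. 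Because $\Phi \in \sm_c(F^m)$, the $\Phi(l_n(h))$ factor restricts the relevant torus coordinates to a compact-mod-$\o^*$ set on the side bounded below, so only the ``large'' directions (where $|h|\to 0$, i.e. the coordinates $t(z_k)$ with $|z_k|$ small) matter for convergence; this is exactly where Proposition \ref{DL} is designed to help.

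Next I would substitute the expansion of Proposition \ref{DL} for $W(z_1\cdots z_n)$: a finite linear combination of terms $c_\pi(t(z_n))\prod_{k=1}^{n-1} c_{k,i_k}(t(z_k))|z_k|^{(n-k)/2}v(t(z_k))^{m_k}\phi_k(t(z_k))$ with $\phi_k\in C_c^\infty(F)$. Since convergence is preserved under finite sums, it suffices to bound one such term. Parametrizing the torus of $H_n$ and bookkeeping the powers of $q$, each integration variable — say indexed by $k$ — contributes a geometric-type sum $\sum_{j} (\text{something})^j$ whose ratio is $q^{-j[kRe(s) + Re(c_{k,i_k}) + \e_k Re(t+1/2) + \text{half-integer shifts that cancel}]}$; here the $|z_k|^{(n-k)/2}$ from the expansion, the $\chi_n^{1/2}$ and modulus factors, and the Jacobian of the Iwasawa decomposition should combine so that the surviving exponent is precisely $kRe(s)+Re(c_{k,i_k})+\e_k Re(t+1/2)$, giving absolute convergence exactly under the stated inequalities $Re(s) > -[Re(c_{k,i_k})+\e_k Re(t+1/2)]/k$. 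The role of $\e_k$ (which is $0$ for $k$ even, $1$ for $k$ odd) is that the character $\chi_t = \chi_{|.|^t}$ contributes $|g_1|^t/|g_2|^t$ on $h(g_1,g_2)$, and whether the $k$-th torus direction sits in the $g_1$-block or the $g_2$-block of $w_n^{-1}L_nw_n$ — which by the explicit permutation $w_n$ alternates with the parity of $k$ — determines whether the $t$-dependence appears; tracking this through $w_n$ is the delicate bookkeeping. For $\Psi_{(0)}$ the computation is the same with $n$ replaced by $n-1$ in the ranges and with $\mu_n^{1/2}$, $|h|^{s-1/2}$ in place of $\chi_n^{1/2}$, $|h|^s$, using ${\mu_n}_{|H_{n-1}} = \chi_{n-1}$; the second displayed identity for $\Psi_{(0)}$ (the rewriting as an integral over $N_{n-1}^\sigma\backslash G_{n-1}^\sigma$) is just the fact that $P_n^\sigma \cap$ the relevant unipotent directions identify with $H_{n-1}$.

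For the meromorphic continuation to $\C(q^{-s},q^{-t})$, once the integral is written, on its domain of absolute convergence, as a (finite sum of) absolutely convergent multiple geometric-type series in $q^{-s}$ and $q^{-t}$ — each being $\sum$ over a finitely-generated submonoid of a lattice of monomials $q^{-as-bt}$ with $a,b\ge 0$ — standard arguments (as in \cite{JPS}, or as already used in \cite{Mcrelle}) show the sum is a rational function in $q^{-s},q^{-t}$, since each such series sums to a ratio of polynomials with denominator a product of factors $1 - q^{-c}(q^{-s})^a(q^{-t})^b$. I expect the main obstacle to be purely organizational rather than conceptual: carrying out the Iwasawa decomposition on $H_n$ correctly — in particular writing down the precise Haar measure Jacobian on $N_n^\sigma\backslash H_n$ relative to the torus $A_n^\sigma$, and tracking, via the explicit formula for $w_n$, exactly which torus coordinate lands in which $GL$-block so that the character $\chi_t$ and the normalizing factors $\chi_n^{1/2}$ (resp. $\mu_n^{1/2}$) produce the stated exponents with the correct $\e_k$. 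Once that bookkeeping is pinned down, the estimate and the rationality are routine and parallel to \cite{Mcrelle}.
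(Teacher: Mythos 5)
Your plan reproduces the paper's argument: use the Iwasawa decomposition of $H_n$ to reduce to an integral over the diagonal torus against the modulus character of the Borel of $H_n$, substitute the asymptotic expansion of Proposition \ref{DL}, observe that the modulus character cancels the $|z_k|^{(n-k)/2}$ terms leaving precisely the exponent $ks+\text{Re}(c_{k,i_k})+\e_k(t+1/2)$ (the $\e_k$ arising because the first $k$ diagonal slots split into $\lceil k/2\rceil$ odd and $\lfloor k/2\rfloor$ even positions under $w_n$), and then read off convergence and rationality from one-variable Tate-type integrals. One small slip: $A_n^\sigma=A_n\cong(F^*)^n$, since $d_n$ is diagonal and so fixes all of $A_n$, not just $\lceil n/2\rceil$ coordinates — but this does not affect the argument.
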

\begin{proof}
Let $B_n^\sigma$ be the standard Borel subgroup of $H_n$, and $\delta_{B_n}^\sigma$ be its modulus character. 
The integral $$\Psi(s,t,W,\Phi)$$ will converge absolutely 
as soon as the integrals $$\int_{A_n}W(a)\phi(l_n(a))\chi_n^{1/2}(a)\chi_{t}(a)|a|^s\d_{B_n^\sigma}^{-1}(a)d^*a$$ will 
do so for any $W\in W(\pi,\theta)$, and any $\Phi\in \sm_c(F^m)$. But, according to Proposition \ref{DL}, and writing $z$ as $z_1\dots z_n$, 
this will be the case if the integrals of the form 
 $$\int_{A_n} \!\!\!\!\!\! c_{\pi}(t(z_n))\Phi(l_n(z_n))\!\!\prod_{k=0}^{n-1} c_{k,i_k}(z_k)|z_k|^{(n-k)/2}v(t(z_k))^{m_k}\phi_k(t(z_k))
(\chi_n^{1/2}\chi_{t})(z)|z|^s\d_{B_n^\sigma}^{-1}(z)d^*z$$  converge absolutely. But 
$\d_{B_n^\sigma}^{-1}\chi_n^{1/2}(z_k)=|t(z_k)|^{-\frac{k(n-k)}{2}+\frac{\e_k}{2}}$, and $\chi_{t}(z_k)=|t(z_k)|^{\e_k t}$ where $\e_k=0$ if $k$ is even, and $1$ if $k$ is odd, 
hence this will be the case if 
$$\int_{F^*} c_{\pi}(x)\phi(0,\dots,0,x)|x|^{ns} |x|^{\e_n (t+1/2)} d^*x$$ and each integral 
$$\int_{F^*} c_{k,i_k}(x)|x|^{\e_k/2}v(x)^{m_k}\phi_k(x)|x|^{\e_k t}|x|^{ks} d^*x
=\int_{F^*} c_{k,i_k}(x)|x|^{ks}|x|^{\e_k (t+1/2)}v(x)^{m_k}\phi_k(x)d^*x$$ converges absolutely.
 This will be the case if for every $k\in\{1,\dots,n\}$, one has $$Re(s)+[Re(c_{k,i_k})+\e_k(Re(t+1/2)]/k>0.$$ 
 Moreover, this also shows that $\Psi(s,t,W,\phi)$ extends to $\C\times \C$ as an element of $\C(q^{-s},q^{-t})$, by 
Tate's theory of local factors for characters of $F^*$. The case of the integral $\Psi_{(0)}$ is similar.
\end{proof}

\begin{cor}[of the proof]\label{cor}
There is an element $Q(q^{-s},q^{-t})$ of $\C[q^{-s},q^{-t}]$ with $Q(0,0)=1$, such that $Q(q^{-s},q^{-t})\Psi(s,t,W,\phi)\in 
\C[q^{\pm s},q^{\pm t}]$ for all $W\in W(\pi,\theta)$ and $\phi\in \sm_c(F^m)$.
\end{cor}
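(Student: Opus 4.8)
The plan is to read the statement off from the proof of Proposition~\ref{CV}; the only genuinely new ingredient is an observation about uniformity. Recall from that proof that, after the reduction to torus integrals and the substitution of the asymptotic expansion of Proposition~\ref{DL} — using in particular that on $A_n$ one has $l_n(z_1\cdots z_n)=(0,\dots,0,t(z_n))$, so that $\Phi$ only sees the variable $t(z_n)$ — the integral $\Psi(s,t,W,\Phi)$ is exhibited as a finite $\C$-linear combination of products
$$\Big(\int_{F^*}\!\!c_\pi(x)\,\phi_0(x)\,|x|^{ns+\e_n(t+1/2)}\,d^*x\Big)\prod_{k=1}^{n-1}\Big(\int_{F^*}\!\!c_{k,i_k}(x)\,v(x)^{m_k}\,\phi_k(x)\,|x|^{ks+\e_k(t+1/2)}\,d^*x\Big),$$
where $\phi_0,\dots,\phi_{n-1}\in\sm_c(F)$ and the coefficients of the linear combination depend on $W$ and $\Phi$, while the characters $c_{k,i_k}$ range over the $(n-k)$-exponents of $\pi$ and the integers $m_k\ge 0$ over those that occur in Proposition~\ref{DL}. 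The essential point, which is what makes a single $Q$ work, is that the finite set of exponents $\{c_{k,i_k}\}$, the central character $c_\pi$, and an integer $M$ bounding every $m_k$ that can appear (over all $W$) depend only on $\pi$.

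Next I control, by Tate's theory, the denominator of each one--variable factor. Put $X=q^{-s}$, $Y=q^{-t}$, and for $1\le k\le n$ set $\xi_k=q^{-\e_k/2}X^kY^{\e_k}$, a monomial in $X,Y$ with vanishing constant term since $k\ge1$. As $|x|^{ks+\e_k(t+1/2)}=\xi_k^{\,v(x)}$, and as the operator $D=\xi\,d/d\xi$ on $\C(\xi)$ satisfies $v(x)^{m}\xi^{v(x)}=D^{m}\big(\xi^{v(x)}\big)$, the $k$-th factor above equals $\big(D^{m_k}g_k\big)(\xi_k)$ with $g_k(\xi)=\int_{F^*}c_{k,i_k}(x)\phi_k(x)\,\xi^{v(x)}\,d^*x$. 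By Tate's theory $g_k(\xi)\in\C[\xi^{\pm1}]$ if $c_{k,i_k}$ is ramified and $g_k(\xi)\in(1-c_{k,i_k}(\w)\xi)^{-1}\C[\xi^{\pm1}]$ if it is unramified — the Laurent polynomial depends on $\phi_k$, but the possible pole only on $c_{k,i_k}$. Since $D$ sends $(1-\beta\xi)^{-j}\C[\xi^{\pm1}]$ into $(1-\beta\xi)^{-(j+1)}\C[\xi^{\pm1}]$ for every $\beta\in\C^*$ and $j\ge0$, we obtain in either case $(1-c_{k,i_k}(\w)\xi)^{M+1}\big(D^{m_k}g_k\big)(\xi)\in\C[\xi^{\pm1}]$; likewise the central factor is $g_0(\xi_n)$ with $g_0(\xi)=\int_{F^*}c_\pi(x)\phi_0(x)\,\xi^{v(x)}\,d^*x$, and $(1-c_\pi(\w)\xi)g_0(\xi)\in\C[\xi^{\pm1}]$. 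Substituting $\xi=\xi_k$ (resp. $\xi=\xi_n$) and using $\C[\xi_k^{\pm1}]\subseteq\C[X^{\pm1},Y^{\pm1}]$, each factor, once multiplied by the polynomial $(1-c_{k,i_k}(\w)q^{-\e_k/2}X^kY^{\e_k})^{M+1}$ (resp. $(1-c_\pi(\w)q^{-\e_n/2}X^nY^{\e_n})$), lies in $\C[q^{\pm s},q^{\pm t}]$.

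It therefore suffices to take
$$Q(q^{-s},q^{-t})=\big(1-c_\pi(\w)q^{-\e_n/2}X^nY^{\e_n}\big)\ \prod_{k=1}^{n-1}\ \prod_{c}\big(1-c(\w)q^{-\e_k/2}X^kY^{\e_k}\big)^{M+1},$$
the inner product being over the $(n-k)$-exponents $c$ of $\pi$ (ramified ones included, they do no harm). Each factor is a polynomial in $q^{-s},q^{-t}$ with constant term $1$, because $X^kY^{\e_k}$ vanishes at the origin for $k\ge1$, so $Q\in\C[q^{-s},q^{-t}]$ with $Q(0,0)=1$. Multiplying a single summand of the displayed identity by $Q$ puts it in $\C[q^{\pm s},q^{\pm t}]$: the factor $1-c_\pi(\w)q^{-\e_n/2}X^nY^{\e_n}$ clears the central factor, and for each $k$ the factor attached to the exponent $c_{k,i_k}$ occurring in that summand is present to the power $M+1\ge m_k+1$, all remaining factors being polynomials; summing over the finitely many summands yields $Q(q^{-s},q^{-t})\Psi(s,t,W,\Phi)\in\C[q^{\pm s},q^{\pm t}]$. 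As $Q$ depends only on $\pi$, this holds simultaneously for all $W\in W(\pi,\theta)$ and $\Phi\in\sm_c(F^m)$. The only point calling for care is the uniformity invoked in the first paragraph — that in Proposition~\ref{DL} the exponents and the degrees $m_k$ belong to a fixed finite list attached to $\pi$, with only the coefficients and the auxiliary functions $\phi_k$ varying — which is exactly what that proposition says; given this, the rest is a routine application of Tate's theory, the only mild subtlety being that the factor $v(x)^{m_k}$ raises the order of a pole of a Tate integral, but by the controlled amount $m_k$.
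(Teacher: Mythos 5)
Your proposal is correct and follows the same route as the paper: express $\Psi(s,t,W,\Phi)$ via Proposition~\ref{DL} as a finite combination of products of one-variable Tate integrals in the monomials $\xi_k=q^{-\varepsilon_k/2}(q^{-s})^k(q^{-t})^{\varepsilon_k}$, and take $Q$ to be a sufficiently high power of the product of the corresponding inverse Tate $L$-factors $L(c_{k,i_k},ks+\varepsilon_k(t+1/2))^{-1}$. The paper compresses this to a single sentence; you have merely spelled out the Tate-theory bookkeeping (including the effect of the $v(x)^{m_k}$ factors, which raise pole orders by the bounded amount $m_k$ via the operator $\xi\,d/d\xi$), and your appeal to uniformity of the exponents and the bound on the $m_k$'s is the same implicit input the paper uses when it speaks of ``a suitable power''.
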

\begin{proof}
It follows from the theory of Tate $L$-factors, that we can choose $Q$ to be a suitable power of the product of the 
Tate $L$-factors $L(c_{k,i_k},ks+\e_k (t+1/2))$.
\end{proof}

For convenience, we set $L_t=\C(q^{-t})$ and $O_t=\C[q^{-t}]$. 

\begin{prop}\label{df}
Let $\pi$ be a representation of $G_n$ of Whittaker type. The $L_t$-vector space spanned 
by the integrals $\Psi(s,t,W,\Phi)$ when $W$ and $\Phi$ vary in $W(\pi,\theta)$ and $\sm_c(F^m)$ is a fractional 
ideal of $L_t[q^{\pm s}]$ of the form $L^{BF}(s,t,\pi)L_t[q^{\pm s}]$, where 
$L^{BF}(s,t,\pi)$ is an Euler factor 
$1/P^{BF}(q^{-s},q^{-t},\pi)$ with $P^{BF}(q^{-s},q^{-t},\pi)\in L_t[q^{-s}]$ and $P^{BF}(0,q^{-t},\pi)=1$, which is uniquely determined. 
In fact, the $P^{BF}(q^{-s},q^{-t},\pi)$ belongs to 
$O_t[q^{- s}]=\C[q^{-s},q^{-t}]$, and for $W$ in $W(\pi,\theta)$ and $\Phi \in \sm_c(F^{m})$, the integral $P^{BF}(q^{-s},q^{-t},\pi)\Psi(s,t,W,\Phi)$ belongs to $O_t[q^{\pm s}]$.
\end{prop}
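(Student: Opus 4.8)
The plan is to realise the $L_t$-span $V$ of the integrals $\Psi(s,t,W,\Phi)$ as a fractional ideal of the principal ideal domain $L_t[q^{\pm s}]$ (the Laurent polynomial ring in $q^{-s}$ over the field $L_t=\C(q^{-t})$), and then to pin down its generator and the denominator of that generator.

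\textbf{Module structure.} By Proposition~\ref{CV} each $\Psi(s,t,W,\Phi)$ lies in $L_t(q^{-s})=\C(q^{-s},q^{-t})$, so $V$ is at least an $L_t$-subspace there; the point is to make it an $L_t[q^{\pm s}]$-module, for which I would translate the Whittaker vector. For $n$ even, take $\gamma=h(\mathrm{diag}(\varpi,I),I)\in H_n$: then $|\gamma|=q^{-1}$, $\chi_t(\gamma)=q^{-t}$, $\chi_n(\gamma)=1$, and $l_n$ does not see the $\varpi$; a change of variables $h\mapsto h\gamma^{\mp1}$ in the integral (taken against a right $H_n$-invariant measure on $N_n^\sigma\backslash H_n$) then gives $\Psi(s,t,\pi(\gamma^{\pm1})W,\Phi)=q^{\pm(s+t)}\Psi(s,t,W,\Phi)$. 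For $n$ odd one argues symmetrically and gets $q^{\pm(s-t)}$-equivariance up to a unit of $\C^\times$. Since $q^{s\pm t}$ differs from $q^{s}$ by a unit of $L_t$, it follows that $V$ is an $L_t[q^{\pm s}]$-submodule of $\C(q^{-s},q^{-t})$. By Corollary~\ref{cor}, $Q\Psi(s,t,W,\Phi)\in\C[q^{\pm s},q^{\pm t}]\subseteq L_t[q^{\pm s}]$ for all $W,\Phi$, so $Q\cdot V$ is an ideal of the PID $L_t[q^{\pm s}]$, hence finitely generated; therefore $V$ is a finitely generated $L_t[q^{\pm s}]$-module.

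\textbf{Non-vanishing and normal form.} To see $V\neq0$, and indeed $V\supseteq L_t[q^{\pm s}]$: take $\Phi$ the characteristic function of a small neighbourhood of $l_n(e)=(0,\dots,0,1)$ in $F^m$, and $W\in W(\pi,\theta)$ supported, modulo $N_n^\sigma$, in a small neighbourhood of the identity coset of $N_n^\sigma\backslash H_n$ and nonzero there. Such a $W$ exists because $\pi$ is generic (Proposition~\ref{fourretout}), so the restriction of its Whittaker model to $P_n$ contains $\mathrm{ind}_{N_n}^{P_n}\theta$ and, via the Iwasawa decomposition of $H_n$ and the cutoff provided by $\Phi$, this yields enough compactly supported functions near the identity coset; this is precisely the non-vanishing argument of \cite{Mcrelle} in the one-variable case, insensitive to the extra variable $t$. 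On the support of the integrand $|h|=1$ and $\chi_t(h)=\chi_n(h)^{1/2}=1$, so $\Psi(s,t,W,\Phi)$ is a nonzero constant, independent of $s$ and $t$; hence $1\in V$. Now $L_t[q^{\pm s}]$ is a PID, so $V=L^{BF}(s,t,\pi)\,L_t[q^{\pm s}]$ with $L^{BF}$ unique up to a unit $c\,q^{ks}$ ($c\in L_t^\times$, $k\in\Z$); from $1\in V$, $1/L^{BF}\in L_t[q^{\pm s}]$, so $1/L^{BF}=q^{ms}R(q^{-s})$ with $m\in\Z$, $R\in L_t[q^{-s}]$, $R(0)\neq0$. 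Putting $P^{BF}(q^{-s},q^{-t},\pi):=R(q^{-s})/R(0)$, we have $P^{BF}\in L_t[q^{-s}]$, $P^{BF}(0,q^{-t},\pi)=1$, and $1/P^{BF}=R(0)q^{-ms}L^{BF}$ is a unit multiple of $L^{BF}$, hence still generates $V$; this normal form leaves no unit ambiguity, so $P^{BF}$ is unique.

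\textbf{Integrality.} From $Q\cdot V\subseteq L_t[q^{\pm s}]$ we get $Q/P^{BF}\in L_t[q^{\pm s}]$, and since $P^{BF}$ and $Q$ lie in $L_t[q^{-s}]$ with nonzero $q^{-s}$-constant terms, $P^{BF}$ divides $Q$ in $L_t[q^{-s}]$; write $Q=P^{BF}T$. By Corollary~\ref{cor} one may take $Q$ to be a product of Tate denominators, i.e.\ of factors $1-\beta q^{-ks}$ ($k$ even, $\beta\in\C$) and $1-\beta q^{-1/2}q^{-ks}q^{-t}$ ($k$ odd, $\beta\in\C$). Each of the first type splits over $\C$ into linear factors $1-cq^{-s}$; each of the second type is irreducible over $L_t=\C(q^{-t})$, since $q^{t}/\beta$ is not a $p$-th power in $\C(q^{-t})$ for any prime $p$ and $-4$ is a fourth power in $\C$. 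Hence the irreducible factors of $Q$ in $L_t[q^{-s}]$ all lie in $O_t[q^{-s}]$ and have $q^{-s}$-constant term $1$; since $P^{BF}$ and $T$ are divisors of $Q$ normalised to have $q^{-s}$-constant term $1$ (note $Q(0,q^{-t})=1$), each is a subproduct of these, so $P^{BF},T\in O_t[q^{-s}]=\C[q^{-s},q^{-t}]$ with $T(0,q^{-t})=1$. Finally, for any $W,\Phi$: since $\Psi\in V=(P^{BF})^{-1}L_t[q^{\pm s}]$ we already know $P^{BF}\Psi\in L_t[q^{\pm s}]$; writing $P^{BF}\Psi=(Q\Psi)/T$, expanding $Q\Psi=\sum_i b_i q^{-is}$ (with $b_i\in\C[q^{\pm t}]$ by Corollary~\ref{cor}), $T=1+\sum_{j\geq1}t_j q^{-js}$ (with $t_j\in O_t$), $P^{BF}\Psi=\sum_i A_i q^{-is}$, the identity $T\cdot(P^{BF}\Psi)=Q\Psi$ forces $A_i=b_i-\sum_{j\geq1}t_j A_{i-j}$; an induction on $i$ starting from the lowest $q^{-s}$-degree gives $A_i\in\C[q^{\pm t}]$ for all $i$, i.e.\ $P^{BF}\Psi\in\C[q^{\pm s},q^{\pm t}]$, which is the asserted clearing of denominators.

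\textbf{Main obstacle.} The module structure and the normal-form bookkeeping are routine. I expect two steps to need care. First, the non-vanishing $1\in V$: it rests on the Whittaker model of $\pi$ having enough compactly supported functions after the relevant restriction to $H_n$, which I would import from the one-variable analysis of \cite{Mcrelle}. Second, and this is the crux, the integrality $P^{BF}\in\C[q^{-s},q^{-t}]$: a priori $P^{BF}$ is merely a polynomial in $q^{-s}$ over the fraction field $\C(q^{-t})$, and the key point is that the $\C(q^{-t})$-irreducible factors of the crude clearing polynomial $Q$ of Corollary~\ref{cor} are still defined over $\C[q^{-t}]$ with unit $q^{-s}$-constant term, so that the normalisation $P^{BF}(0,q^{-t},\pi)=1$ already forces $P^{BF}$ to be one of their subproducts.
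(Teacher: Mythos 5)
Your proof is correct, and its skeleton is the same as the paper's: establish the $L_t[q^{\pm s}]$-module structure by translation, get $1$ into the span via the non-vanishing argument of Proposition 4.11 of \cite{Mcrelle}, and control the denominator of the generator through the divisibility $P^{BF}\mid Q$ supplied by Corollary \ref{cor}. Where you genuinely diverge is in the two integrality steps. For $P^{BF}\in\C[q^{-s},q^{-t}]$ the paper invokes Gauss's lemma in the UFD $O_t[q^{-s}]$ (a divisor in $L_t[q^{-s}]$ of the primitive polynomial $Q$, once normalised at $q^{-s}=0$, lies in $O_t[q^{-s}]$), whereas you factor $Q$ explicitly into its $L_t$-irreducible constituents $1-cq^{-s}$ and $1-\beta q^{-1/2}q^{-t}(q^{-s})^k$ ($k$ odd) and observe that any divisor with constant term $1$ is a subproduct; your irreducibility check for the odd-$k$ factors (the valuation of $q^{t}/\beta$ at $q^{-t}=0$ is $-1$, not divisible by any prime) is sound, and the $-4$ caveat is vacuous since $k$ is odd. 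For the clearing of denominators, the paper argues that $\Psi$ has no polar divisor of the form $\C\times\{t=t_0\}$, so the coefficients $a_k(q^{-t})$ have no poles; you instead run a coefficient recursion against $T=Q/P^{BF}$. Both are fine, but note that your recursion lands in $\C[q^{\pm s},q^{\pm t}]$ rather than in $O_t[q^{\pm s}]=\C[q^{-t}][q^{\pm s}]$ as literally stated. This is not a defect of your argument: already for $n=1$, $\pi$ an unramified character and $\Phi=\1_{\varpi^{-1}\o}$, one computes that $P^{BF}(q^{-s},q^{-t},\pi)\Psi(s,t,W,\Phi)$ is a nonzero multiple of $q^{s+t}$, which involves a positive power of $q^{t}$. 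The paper's own argument likewise only excludes poles of the $a_k$ at points of $\C^*$ (the image of $t\mapsto q^{-t}$), not at $q^{-t}=0$; so $\C[q^{\pm s},q^{\pm t}]$ is the correct target, and it is the form actually used later (for instance in the proof of the functional equation).
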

\begin{proof}
For $h_0\in H_n$, changing $\Psi(s,t,W,\Phi)$ by $\Psi(s,t,\rho(h_0)W,\rho(h_0)\Phi)$ (where $\rho$ denotes right translation) multiplies 
$\Psi(s,t,W,\Phi)$ by $\chi_n(h_0)^{-1/2}\chi_{t}(h_0)^{-1}|h_0|^{-s}$. This implies that the integrals $\Psi(s,t,W,\Phi)$ span a fractional 
ideal of $L_t[q^{-s}]$. Moreover, it is shown in the proof of Proposition 4.11. of \cite{Mcrelle}, that one can chose 
$W$ and $\Phi$ such that $\Psi(s,t,W,\Phi)=1$ for all $s$ and $t$. This implies the existence of $P^{BF}(q^{-s},q^{-t},\pi)$. 
Now, thanks to Corollary \ref{cor}, $P^{BF}(q^{-s},q^{-t},\pi)$ divides the polynomial $Q\in O_t[q^{-s}]$ in the ring 
$L_t[q^{\pm s}]$. As both have constant term equal to $1$, this first implies that $P^{BF}$ divides $Q$ in $L_t[q^{-s}]$, and as 
$O_t[q^{-s}]$ is a unique factorisation domain (because $O_t=\C[q^{-t}]$ is), this also implies that $P^{BF}$ belongs to $O_t[q^{-s}]$.
 Finally, for all $W$ and $\Phi$ we have $P^{BF}(q^{-s},q^{-t})\Psi(s,t,W,\Phi)\in L_t[q^{\pm s}]$, so we can write it $\sum_{k=-N}^N a_k(q^{-t})q^{-ks}$ for some $N\geq 0$, with $a_k\in \C(X)$. According to the proof of Corollary \ref{cor}, the integral $\Psi(s,t,\Phi,W)$ has no singularities 
of the form $\{s\in \C\}\times \{t=t_0\}$ (because the same holds for the Tate $L$-factors $L(c_{k,i_k},ks+\e_k (t+1/2))$), in particular the $a_k$'s belong to $O_t$. 
\end{proof}

\subsection{Results on a specialisation of the local Bump-Friedberg factor}

This specialisation in the variable $t$ (we fix $t$), which is an Euler factor with respect to $q^{-s}$, denoted $L^{lin}(s,t,\pi)$ is studied in \cite{Mcrelle}. As we will see, it is not really 
defined as a specialisation, but rather as the gcd of a the family of specialised integrals, and for this reason, it 
is not obvious at all that $L^{lin}(s,t,\pi)$ is rational 
as a map of $q^{-s}$ and $q^{-t}$. Hence, we will use results about this specialisation as an intermediate step, but they 
will not appear in our final statements. In fact, in general, we will see that except maybe for a finite number of
 values of $t$, the equality $L^{lin}(s,t,\pi)=L^{BF}(s,t,\pi)$ is true (the equality is probably true for all values of $t$, 
but we did not check it, however, it is true for discrete series representations).\\
 
We start by recalling a special case of Proposition 4.11 of \cite{Mcrelle}. 

\begin{prop}\label{defLlin}
Let $n$ be a positive integer, and $\pi$ be a representation of $G_n$ of Whittaker type. 
For fixed $t\in \C$, the integrals $\Psi(s,t,W,\phi)$ generate a (necessarily principal) 
fractional ideal $I(t,\pi)$ of $\C[q^{\pm s}]$ when $W$ and $\phi$ vary in their respective spaces, and $I(t,\pi)$ 
has a unique generator which is an Euler factor which we denote by $L^{lin}(s,t,\pi)$ (and denoted by 
$L^{lin}(\pi,\chi_{t},s)$ in \cite{Mcrelle}).\\
The integrals $\Psi_{(0)}(s,t,W)$ (which are equal to $1$ be convention if $n=1$) generate a (necessarily principal) 
fractional ideal $I_{(0)}(t,\pi)$ of $\C[q^s,q^{-s}]$ when $W$ and $\phi$ vary in their respective spaces, 
and $I_{(0)}(t,\pi)$ has a unique generator which is an Euler factor, which we denote by $L^{lin}(s,t,\pi)$ (and denoted by 
$L_{(0)}^{lin}(\pi,\chi_{t},s)$ in \cite{Mcrelle}). 
\end{prop}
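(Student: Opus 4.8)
The plan is to obtain the statement as the single-variable specialisation, at a fixed $t=t_0\in\C$, of the analysis already carried out in Proposition \ref{CV} and Corollary \ref{cor}, combined with the two formal facts underlying every Rankin--Selberg local factor: the family of integrals is stable under a twisting action by $H_n$, and it contains the constant function $1$. First I would run the proof of Proposition \ref{CV} with $t=t_0$ fixed: after reducing the integral over $N_n^\sigma\backslash H_n$ to an integral over the diagonal torus and inserting the expansion of Proposition \ref{DL}, $\Psi(s,t_0,W,\phi)$ becomes a finite $\C$-linear combination of one-variable Tate integrals in $q^{-s}$; by Tate's theory it converges for $Re(s)$ large and lies in $\C(q^{-s})$. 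Specialising Corollary \ref{cor} at $t_0$ produces a single polynomial $Q_{t_0}(q^{-s})\in\C[q^{-s}]$ — a suitable power of $\prod_{k,i_k}L(c_{k,i_k},ks+\e_k(t_0+1/2))^{-1}$ — with $Q_{t_0}(0)=1$ and $Q_{t_0}(q^{-s})\,\Psi(s,t_0,W,\phi)\in\C[q^{\pm s}]$ for every $W\in W(\pi,\theta)$ and $\phi\in\sm_c(F^m)$.

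Next I would establish the fractional-ideal structure. Exactly as in the proof of Proposition \ref{df}, replacing $(W,\phi)$ by $(\rho(h_0)W,\rho(h_0)\phi)$ for $h_0\in H_n$ multiplies $\Psi(s,t_0,W,\phi)$ by $\chi_n(h_0)^{-1/2}\chi_{t_0}(h_0)^{-1}|h_0|^{-s}$; taking $h_0\in H_n$ with $|h_0|=q$ shows the $\C$-linear span $I(t_0,\pi)$ of the integrals is stable under multiplication by $q^{\pm s}$, hence is the $\C[q^{\pm s}]$-submodule of $\C(q^{-s})$ they generate. By the previous paragraph $I(t_0,\pi)\subseteq Q_{t_0}(q^{-s})^{-1}\C[q^{\pm s}]$, which is finitely generated over $\C[q^{\pm s}]$; since $\C[q^{\pm s}]\cong\C[X,X^{-1}]$ is a principal ideal domain, $I(t_0,\pi)$ is a finitely generated, hence principal, fractional ideal.

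Finally I would pin down the distinguished generator. One recalls from the proof of Proposition 4.11 of \cite{Mcrelle} — the same construction invoked in the proof of Proposition \ref{df} — the existence of $W_0\in W(\pi,\theta)$ and $\phi_0\in\sm_c(F^m)$ with $\Psi(s,t_0,W_0,\phi_0)=1$ identically in $s$; thus $1\in I(t_0,\pi)$, so $I(t_0,\pi)=P(q^{-s})^{-1}\C[q^{\pm s}]$ for a unique polynomial $P\in\C[q^{-s}]$ with $P(0)=1$ (uniqueness because the units of $\C[q^{\pm s}]$ are exactly the monomials $cq^{ks}$, $c\in\C^*$, $k\in\Z$, so two such normalised denominators must coincide). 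We then set $L^{lin}(s,t_0,\pi)=1/P(q^{-s})$, the unique Euler-factor generator. The assertion for the integrals $\Psi_{(0)}(s,t,W)$ is proved verbatim, using the description of $\Psi_{(0)}$ as an integral over $N_{n-1}^\sigma\backslash G_{n-1}^\sigma$, the $\Psi_{(0)}$-analogue of Proposition \ref{CV}, and the convention $\Psi_{(0)}\equiv 1$ for $n=1$. The only genuinely non-formal ingredient — and the reason the statement is extracted from \cite{Mcrelle} rather than reproved here — is the construction of the normalising pair $(W_0,\phi_0)$; everything else is the convergence and rationality already recorded in Proposition \ref{CV} and Corollary \ref{cor} together with routine manipulations of fractional ideals over the Laurent polynomial ring $\C[q^{\pm s}]$.
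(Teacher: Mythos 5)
Your argument is correct and is essentially the standard one: the paper itself gives no proof of this proposition (it is recalled as a special case of Proposition 4.11 of \cite{Mcrelle}), and your reconstruction — convergence and rationality from the fixed-$t$ version of Proposition \ref{CV}, bounded denominators from Corollary \ref{cor}, the $H_n$-translation action giving the $\C[q^{\pm s}]$-module structure, principality from $\C[q^{\pm s}]$ being a PID, and normalisation via the pair $(W_0,\phi_0)$ with $\Psi\equiv 1$ — is exactly the scheme the paper uses for the two-variable analogue in Proposition \ref{df}. You also correctly identify the one non-formal input (the construction of $(W_0,\phi_0)$) as the part genuinely imported from \cite{Mcrelle}.
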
 

For $t\in \C$, we set $$L^{lin,(0)}(s,t,\pi)=L^{lin}(s,t,\pi)/L_{(0)}^{lin}(s,t,\pi)$$ (denoted 
$L_{rad(ex)}^{lin}(\pi,\chi_{t},s)$ in \cite{Mcrelle}). The following property is a consequence of Proposition 4.9 and 
Corollary 4.2 of \cite{Mcrelle}.

\begin{prop}
Let $n$ be a positive integer, and $\pi$ be a representation of $G_n$ of Whittaker type. Fix $t\in \C$, then $L^{lin,(0)}(s,t,\pi)$ is an Euler factor with simple poles. 
\end{prop}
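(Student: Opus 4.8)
The plan is to realise $L^{lin,(0)}(s,t,\pi)$ as a divisor, in $\C[q^{\pm s}]$, of a single Tate $L$-factor in the variable $q^{-ns}$; all the poles of such a factor being simple, that is enough. First, though, one should justify that $L^{lin,(0)}(s,t,\pi)=L^{lin}(s,t,\pi)/L^{lin}_{(0)}(s,t,\pi)$ is an Euler factor at all, i.e.\ that $L^{lin}_{(0)}(s,t,\pi)$ divides $L^{lin}(s,t,\pi)$ for each fixed $t$: this is Corollary~4.2 of \cite{Mcrelle}, and it reflects the fact that the mirabolic integrals $\Psi_{(0)}(s,t,W)$ occur inside the family $\{\Psi(s,t,W,\Phi)\}$ when one decomposes $\Psi$ along $P_n^\sigma\backslash H_n$ (the coordinate on which $\Phi(l_n(h))$ varies), so that every pole of $L^{lin}_{(0)}$ is among those of $L^{lin}$. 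Granting this, $L^{lin,(0)}(s,t,\pi)=1/P^{lin,(0)}(q^{-s},t,\pi)$ for a polynomial $P^{lin,(0)}$ in $q^{-s}$ over $\C$ with constant term $1$, and it remains to bound the order of its poles.

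For the substance of the statement I would return to the coordinatewise unfolding from the proof of Proposition~\ref{CV}. Writing a torus element of $H_n$ as $z_1\cdots z_n$ with $z_k=\mathrm{diag}(t(z_k)I_k,I_{n-k})$, the integrand of $\Psi_{(0)}(s,t,W)$ involves only the coordinates $z_1,\dots,z_{n-1}$, whereas the full integral $\Psi(s,t,W,\Phi)$ carries in addition the central coordinate $z_n$, on which, by Proposition~\ref{DL}, the Whittaker function contributes exactly the central character $c_\pi=\omega_\pi$ of $\pi$ (all irreducible subquotients of $\pi^{(0)}=\pi$ sharing this central character) while $\Phi$ contributes its value $\Phi(0,\dots,0,t(z_n))$. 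Integrating this coordinate out is a Tate zeta integral $\int_{F^\ast}\omega_\pi(x)\,\Phi'(x)\,|x|^{ns+\e_n(t+1/2)}\,d^\ast x$ with $\Phi'\in\sm_c(F)$, spanning the fractional ideal generated by the Tate factor $L(\omega_\pi|\cdot|^{\e_n(t+1/2)},ns)$. Proposition~4.9 of \cite{Mcrelle} upgrades this to a statement about greatest common divisors, namely that $L^{lin,(0)}(s,t,\pi)$ divides $L(\omega_\pi|\cdot|^{\e_n(t+1/2)},ns)$. Since the latter equals $1$ when $\omega_\pi$ is ramified and otherwise equals $(1-\omega_\pi(\w)q^{\e_n(t+1/2)}q^{-ns})^{-1}$ — the inverse of a polynomial in $q^{-s}$ with $n$ distinct simple roots — any Euler-factor divisor of it has only simple poles, for every fixed $t$; hence so does $L^{lin,(0)}(s,t,\pi)$.

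I expect the real work to lie wholly in those two cited results, and in particular in the passage from individual integrals to greatest common divisors: the clean factorisation $\Psi(s,t,W,\Phi)=\Psi_{(0)}(s,t,W')\times(\text{Tate integral})$ holds only once $W$ has been chosen so that the $z_n$-coordinate decouples, and one must check that forming gcds over all admissible $(W,\Phi)$ neither destroys the Tate pole nor inflates its order in $L^{lin}/L^{lin}_{(0)}$ — which is exactly the content of Proposition~4.9 of \cite{Mcrelle}. If one preferred an argument independent of that reference, one could instead reduce to the case of $\pi$ a single discrete series via multiplicativity of the local factor under parabolic induction, where the surviving exceptional contribution is a single Tate factor and the simplicity of its poles is immediate; this is, in effect, how the analogous statement for the Godement–Jacquet / Rankin–Selberg factor is proved.
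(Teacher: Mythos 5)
Your proof takes the same route as the paper, which simply records the proposition as a consequence of Proposition~4.9 and Corollary~4.2 of \cite{Mcrelle} without further argument; you have correctly identified Corollary~4.2 as supplying the divisibility $L^{lin}_{(0)}(s,t,\pi)\mid L^{lin}(s,t,\pi)$ (hence $L^{lin,(0)}$ is an Euler factor) and Proposition~4.9 as supplying the bound of $L^{lin,(0)}$ by a Tate factor in $q^{-ns}$, whose poles are automatically simple. Your unpacking is a faithful reconstruction of the cited ingredients and adds nothing inconsistent with the paper's intent.
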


We also recall Theorem 5.2 of \cite{Mcrelle}.

\begin{thm}\label{inductivity}
Let $n$ be a positive integer, and $\pi=L(\tau)$ be an irreducible representation of $G_n$, with $\tau=\d_1\times \dots \times \d_r$ induced of Langlands' type. 
Then, for $t\in [-1/2,0]$, we have $$L^{lin}(s,t,\pi)= \prod_{i=1}^r L^{lin}(s,t,\d_i)\prod_{1\leq i<j \leq r} L(2s, \d_i,\d_j) .$$
\end{thm}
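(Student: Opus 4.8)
The plan is to run the standard Rankin--Selberg multiplicativity argument (due to Jacquet, Piatetski-Shapiro and Shalika) for the Bump--Friedberg integrals, carrying the auxiliary character $\chi_t$ along as a fixed parameter. Throughout, $t\in[-1/2,0]$ is fixed and one works in the single variable $s$. Since $W(\pi,\theta)=W(\tau,\theta)$ for $\tau=\d_1\times\dots\times\d_r$, one may replace $\pi$ by $\tau$ and realise $W(\tau,\theta)$ via the Jacquet integral attached to the standard parabolic $P$ with Levi $M=G_{n_1}\times\dots\times G_{n_r}$: a generic $W\in W(\tau,\theta)$ is obtained by integrating over the unipotent radical of $P$ a section $f$ of the induced representation with values in $\bigotimes_i W(\d_i,\theta)$. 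Everything then reduces to identifying the fractional ideal of $\C[q^{\pm s}]$ spanned by the $\Psi(s,t,W,\Phi)$ with $\prod_{i=1}^r L^{lin}(s,t,\d_i)\prod_{1\le i<j\le r}L(2s,\d_i,\d_j)\,\C[q^{\pm s}]$, which I would establish through two divisibilities.

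For the divisibility $L^{lin}(s,t,\pi)\mid\prod_i L^{lin}(s,t,\d_i)\prod_{i<j}L(2s,\d_i,\d_j)$, I would substitute the Jacquet integral for $W$ into $\Psi(s,t,W,\Phi)$, interchange the integrations, and decompose the domain $N_n^\sigma\backslash H_n$ according to the finite set $R(P\backslash G_n/H_n)$ of orbits of $H_n$ on $P\backslash G_n$ --- the same kind of decomposition already used in the proof of Proposition~\ref{intermediaire} and made explicit in Section~3.1 of \cite{Mcrelle}. On the open orbit, a change of variables respecting the explicit shape of $H_n$, of $l_n$, and of the characters $\chi_n^{1/2}$ and $\chi_t$ turns the contribution into a finite sum of products $\prod_i\Psi(s,t,W_i,\Phi_i)\cdot\prod_{i<j}Z_{ij}(2s,W_i,W_j)$, where $Z_{ij}$ is a Rankin--Selberg integral for the pair $(\d_i,\d_j)$ in the sense of \cite{JPS} and thus spans $L(2s,\d_i,\d_j)\C[q^{\pm s}]$; the shift $2s$ and the fact that the $\chi_t$-twists on the two blocks cancel in the cross terms fall out of this computation. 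The remaining orbits yield integrals attached to proper Jacquet modules of the $\d_i$, which, via the asymptotic expansion of Proposition~\ref{DL} (as exploited already in the proof of Proposition~\ref{CV}), contribute no poles outside the target ideal. This gives the first divisibility; the hypothesis $t\in[-1/2,0]$ is used here to place all the pieces in a common half-plane of absolute convergence and to keep the Langlands ordering of the $\d_i$ --- hence the quotient structure of $\tau$ --- under control.

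For the reverse divisibility, I would realise every pole of the right-hand side. Since $L^{lin}(s,t,\d_i)$ is by definition the gcd of the integrals $\Psi(s,t,\cdot,\cdot)$ for $\d_i$, and $L(2s,\d_i,\d_j)$ the gcd of the integrals $Z_{ij}$, for a prescribed pole (with its order) I would pick, for each $i$, data $(W_i,\Phi_i)$ attaining the relevant pole of $L^{lin}(s,t,\d_i)$ and, for each pair $(i,j)$, data attaining the relevant pole of $L(2s,\d_i,\d_j)$, then splice them into a section $f$ and a Schwartz function $\Phi$ whose supports are arranged so that the associated integral $\Psi(s,t,W,\Phi)$ equals, up to an invertible element of $\C[q^{\pm s}]$, exactly the open-orbit product of the previous step, with the lower-orbit contributions either suppressed by the support conditions or of strictly smaller pole order, so that no cancellation of leading Laurent coefficients can occur. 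This forces $\prod_i L^{lin}(s,t,\d_i)\prod_{i<j}L(2s,\d_i,\d_j)\mid L^{lin}(s,t,\pi)$. Combining the two divisibilities, and using that both sides are Euler factors of the normalised shape (inverse of a polynomial in $q^{-s}$ with constant term $1$), yields the claimed equality.

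The main obstacle is precisely this reverse divisibility: one must show that the spliced data genuinely produces a pole of the full predicted order, which means controlling the interference between the open orbit and the lower orbits and checking that the leading Laurent coefficients multiply without cancellation --- this is the analogue of the pole-realisation step in \cite{JPS} and is the delicate part. A secondary difficulty, more bookkeeping than conceptual, is tracking on each $H_n$-orbit the exact shifts and the placement of $\chi_t$, $\chi_n^{1/2}$ and $|h|^s$ in the Bump--Friedberg integrand, so that the cross pairs come out at argument $2s$ with no residual $t$-twist while the diagonal blocks retain the full linear integrand; it is in this bookkeeping, and in the convergence estimates feeding it, that the restriction $t\in[-1/2,0]$ is actually used.
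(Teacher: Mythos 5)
There is nothing in this paper to compare your argument against: Theorem~\ref{inductivity} is not proved here at all, it is quoted verbatim as Theorem~5.2 of \cite{Mcrelle}, and the present paper only uses it as input. So your proposal has to be judged on its own. The overall strategy --- unfold the Jacquet integral, decompose over $R(P\backslash G_n/H_n)$, isolate an open-orbit contribution that factors into smaller linear integrals and Rankin--Selberg integrals at $2s$, and then prove two divisibilities --- is the right general shape and is in the spirit of how such inductivity statements are established. But two of your steps are asserted rather than proved, and at least one of them is the entire content of the theorem.

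First, the claimed form of the open-orbit contribution, a sum of products $\prod_i\Psi(s,t,W_i,\Phi_i)\cdot\prod_{i<j}Z_{ij}(2s,W_i,W_j)$ with a separate Schwartz function $\Phi_i$ for \emph{each} block, cannot be right as stated: the Bump--Friedberg integral carries a single $\Phi$ on $F^{\lfloor(n+1)/2\rfloor}$, and the unfolding does not manufacture one Schwartz function per factor. Moreover the orbit structure of $H_n$ on $P\backslash G_n$ (Section~3.1 of \cite{Mcrelle}, already invoked in the proof of Proposition~\ref{intermediaire} above) is indexed by matching data on the blocks and is genuinely more complicated than the mirabolic orbit analysis of \cite{JPS}; the non-open orbits produce integrals attached to Jacquet modules/derivatives of the $\d_i$, and showing that these ``contribute no poles outside the target ideal'' is not a consequence of Proposition~\ref{DL} alone --- it requires an inductive statement about the linear factors of the derivatives, which you have not set up. Second, and more seriously, the reverse divisibility is exactly the hard point, and your ``splicing'' argument does not address it: because the unipotent integration couples the blocks and there is only one $\Phi$, you cannot independently prescribe data realizing each pole of each factor and then multiply leading Laurent coefficients; controlling the interference with the lower orbits is not a refinement of your argument but a missing argument. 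In the literature (and in particular in the chain of reasoning of this paper, Section~3.3), this direction is typically obtained not by direct pole realization but by combining the forward divisibility with an independent computation of both sides --- e.g.\ identifying the poles of $L^{lin,(0)}$ with distinction of discrete series (Proposition~\ref{expolediscrete}) and comparing with the Galois side. As it stands, your proposal proves at best one divisibility modulo the treatment of the non-open orbits, not the stated equality.
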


\subsection{Equality with the Galois factor for discrete series}\label{discrete}

We recall a consequence of Proposition 4.14 of \cite{Mcrelle} in the special case of discrete series. As a convention, when 
$n=0$, and $\pi$ is the trivial representation of $G_0$, we set $$L^{lin}(s,t,\pi)=1.$$

\begin{prop}\label{rec}
Let $r$ and $d$ be positive integers (and set $n=dr$), and $\rho$ be a cuspidal representation of $G_r$. Then 
one has $$L^{lin}(s,t,\d(d,\rho))=L^{lin,(0)}(s,t,\d(d,\rho))L^{lin}(s,t,\nu^{1/2}\d(d-1,\rho)).$$
\end{prop}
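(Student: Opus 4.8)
The plan is to reduce the identity to Proposition 4.14 of \cite{Mcrelle} together with the classical computation of the Bernstein--Zelevinsky derivatives of a discrete series. By the definition $L^{lin,(0)}(s,t,\pi)=L^{lin}(s,t,\pi)/L^{lin}_{(0)}(s,t,\pi)$, the asserted formula is equivalent to
$$L^{lin}_{(0)}(s,t,\d(d,\rho))=L^{lin}(s,t,\nu^{1/2}\d(d-1,\rho)),$$
so it suffices to identify the fractional ideal $I_{(0)}(t,\d(d,\rho))$ generated by the integrals $\Psi_{(0)}(s,t,W)$. Using the second expression for $\Psi_{(0)}$ in the definition, these are exactly the ``$\Psi$-integrals without a Schwartz factor'' of the restrictions $W|_{G_{n-1}^\sigma}$, for $W$ in $W(\d(d,\rho),\theta)$.

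The idea is then to unfold these integrals along the Bernstein--Zelevinsky filtration of $\d(d,\rho)|_{P_n}$: its $k$-th layer is built by the functors $\Psi^+,\Phi^+$ from the derivative $\d(d,\rho)^{(k)}$, a representation of $G_{n-k}$, and after restriction to $G_{n-1}^\sigma$ and integration over $N_{n-1}^\sigma\backslash G_{n-1}^\sigma$ the $k$-th layer contributes $\Psi$-type integrals for $\d(d,\rho)^{(k)}$ on $G_{n-k}$, up to a shift in $s$ and the $\nu^{1/2}$-type normalization carried by $\Psi^+$; this is precisely the mechanism of Proposition 4.14 of \cite{Mcrelle}, applied to $\pi=\d(d,\rho)$. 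Now one invokes the structure of the derivatives of a discrete series (\cite{Z}, or the exponent discussion of Section 2): $\d(d,\rho)^{(k)}=0$ for $0<k<n$ except $k=r$, with $\d(d,\rho)^{(r)}$ a twist of $\d(d-1,\rho)$, while $\d(d,\rho)^{(n)}=\1_{G_0}$. Hence only two layers survive. Tracking the normalizations, the $k=r$ layer produces exactly the integrals defining $L^{lin}(s,t,\nu^{1/2}\d(d-1,\rho))$, while the top layer $k=n$ contributes only Laurent polynomials in $q^{-s}$ --- consistent with the convention $L^{lin}(s,t,\1_{G_0})=1$ --- which therefore do not enlarge the fractional ideal generated by the $k=r$ layer. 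Comparing generators yields the equality of Euler factors.

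The main obstacle is the normalization bookkeeping: one must check that the factor $|\cdot|^{s-1/2}$ appearing in $\Psi_{(0)}$, the shift incurred in passing from $G_n$ down to $G_{n-r}$, and the intrinsic twist of the Bernstein--Zelevinsky derivative $\d(d,\rho)^{(r)}$ combine to give precisely $\nu^{1/2}\d(d-1,\rho)$ and the correct value of $s$ in the statement; and that the bottom (top-derivative) layer genuinely contributes only entire functions --- which uses that $\Psi^+$ applied to $\1_{G_0}$ and pushed up by $\Phi^+$ yields the Kirillov-type piece of functions compactly supported modulo $N_n$. Since all of this is exactly what Proposition 4.14 of \cite{Mcrelle} isolates, the proof ultimately consists of quoting that proposition and substituting the derivative computation above.
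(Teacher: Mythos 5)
Your high-level plan agrees with the paper's, which simply states Proposition \ref{rec} as a consequence of Proposition 4.14 of \cite{Mcrelle} for discrete series; and your reduction of the identity to $L^{lin}_{(0)}(s,t,\d(d,\rho))=L^{lin}(s,t,\nu^{1/2}\d(d-1,\rho))$ via $L^{lin,(0)}=L^{lin}/L^{lin}_{(0)}$ is correct. However, your reconstruction of the cited proposition rests on an incorrect statement about the Bernstein--Zelevinsky derivatives of a discrete series. You assert that $\d(d,\rho)^{(k)}=0$ for $0<k<n$ except $k=r$, so that only two layers of the filtration of $\d(d,\rho)|_{P_n}$ survive. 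This is false as soon as $d\ge 3$. The representation with that derivative pattern is the Zelevinsky \emph{submodule} (the Zelevinsky dual of $\d(d,\rho)$), whose only nonzero intermediate derivative is at level $r$. For the discrete series quotient $\d(d,\rho)$, the nonzero derivatives occur at \emph{every} multiple of $r$: $\d(d,\rho)^{(mr)}$ is a twist of $\d(d-m,\rho)$ for $m=0,1,\dots,d$, and the others vanish. Already the Steinberg $\d(d,\1)$ of $G_d$ has $d+1$ nonzero derivatives, one at each level; in general the Jacquet module $r_{(n-mr,mr)}\d(d,\rho)$ is, up to twist, $\d(d-m,\rho)\otimes\d(m,\rho)$, whose generic right-hand factor forces $\d(d,\rho)^{(mr)}\neq 0$.

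Because of this, ``only two layers survive'' is wrong, and your conclusion that $I_{(0)}(t,\d(d,\rho))$ is generated by the $k=r$ layer alone is unsupported: the layers $k=2r,\dots,(d-1)r$ could a priori enlarge the ideal. The correct picture, which the actual statement of Proposition 4.14 of \cite{Mcrelle} must encode, is that the contributions of these higher layers are absorbed \emph{recursively} into the single factor $L^{lin}(s,t,\nu^{1/2}\d(d-1,\rho))$; indeed iterating Proposition \ref{rec}, exactly as the paper does in proving Theorem \ref{eq2discrete}, expands that factor into $\prod_{k=1}^{d-1}L^{lin,(0)}(s,t,\nu^{k/2}\d(d-k,\rho))$, one piece per nonzero intermediate derivative. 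So your proof has a gap precisely where you cut the filtration down to two layers; to close it you would need either to invoke the exact form of Proposition 4.14 of \cite{Mcrelle} rather than your paraphrase of it, or to argue separately that the integrals coming from the layers $k=2r,\dots,(d-1)r$ already lie in the fractional ideal generated by those from the $k=r$ layer.
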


In order to compute the factor $L^{lin,(0)}(s,t,\d(d,\rho))$, we recall Corollary 4.3 of \cite{Mcrelle}.

\begin{prop}\label{expolediscrete} Let $n$ be a positive integer and $t\in \C$. If $\d$ be a discrete series representation of $G_n$, then the factor 
$L^{lin,(0)}(s,t,\d)$ has a pole at $s=0$ if and only if $\d$ is $(H_n,\chi_{t}^{-1})$-distinguished.
\end{prop}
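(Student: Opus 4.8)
The plan is to match the pole of $L^{lin,(0)}(s,t,\d)=L^{lin}(s,t,\d)/L^{lin}_{(0)}(s,t,\d)$ at $s=0$ with the non-vanishing of $Hom_{H_n}(\d,\chi_t^{-1})$, through the standard dictionary between edge poles of these Rankin-Selberg integrals and invariant functionals, using the mirabolic decomposition to pin down which part of $\Psi(s,t,W,\Phi)$ is responsible for the pole.

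The first step is to dissect $\Psi(s,t,W,\Phi)$ along the map $l_n$, which identifies $P_n^\sigma\backslash H_n$ with $F^{m}\setminus\{0\}$ ($m=\lfloor(n+1)/2\rfloor$), the action being the transitive one. Writing $h=p\,\gamma(v)$ with $v=l_n(h)$, $p\in P_n^\sigma$ and $\gamma$ a section, one gets $\Psi(s,t,W,\Phi)=\int_{F^{m}\setminus\{0\}}\Phi(v)\,J_s(v,W)\,dv$ for an inner integral $J_s(v,W)$ over $N_n^\sigma\backslash P_n^\sigma$ of $W$ against the relevant characters. For $v$ bounded away from $0$, $J_s(v,W)$ reduces, after an Iwasawa step, to a finite combination of $\Psi_{(0)}$-type integrals, and conversely every $\Psi_{(0)}$-integral arises this way; hence the subfamily of $\Psi$-integrals with $\Phi$ vanishing at the origin spans exactly $L^{lin}_{(0)}(s,t,\d)\,\C[q^{\pm s}]$, so that $L^{lin,(0)}$ records precisely the behaviour of $\Psi$ as $v\to0$. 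By Proposition \ref{DL}, used as in Proposition \ref{CV}, that behaviour is controlled by the exponents of $\d$: the $v\to0$ contribution is, up to factors regular at $s=0$, a product of a Tate integral $\int_{F^{*}}c_\d(x)\,\Phi(0,\dots,0,x)\,|x|^{ns+\e_n(t+1/2)}\,d^{*}x$ with an inner period of $W$ over $N_n^\sigma\backslash P_n^\sigma$; its polar part at $s=0$ is a simple pole (matching the recalled fact that $L^{lin,(0)}$ has simple poles), present exactly when that inner period is not identically zero.

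For the two implications one runs this dictionary in both directions. If $L^{lin,(0)}(s,t,\d)$ has a pole at $s=0$, choose $W,\Phi$ with $\Phi(0)\neq0$ realising it; since the pole is simple and, by the previous step, not carried by $L^{lin}_{(0)}$, the leading Laurent coefficient $W\mapsto\lim_{q^{-s}\to1}(1-q^{-s})\Psi(s,t,W,\Phi)$ is a nonzero linear form on $W(\d,\theta)$, and specialising the right-translation identity $\Psi(s,t,\rho(h_0)W,\rho(h_0)\Phi)=\chi_n(h_0)^{-1/2}\chi_t(h_0)^{-1}|h_0|^{-s}\Psi(s,t,W,\Phi)$ at $s=0$ (the half-modulus factor being built into the normalisation of $\Psi$ so that what survives is $\chi_t^{-1}$) shows it lies in $Hom_{H_n}(\d,\chi_t^{-1})$, which is therefore nonzero. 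Conversely, suppose $0\neq\lambda\in Hom_{H_n}(\d,\chi_t^{-1})$. This space is at most one-dimensional, since $\d$ is a quotient of an induced representation for which the corresponding $Hom$-space is $\leq1$-dimensional by Proposition \ref{intermediaire} or Corollary \ref{mult1}. Hence the leading-coefficient functional above is, once nonzero, a multiple of $\lambda$; and it is nonzero because $\lambda$ realises exactly the inner period attached to the exponent $c_\d$ in the $v\to0$ analysis, so that this inner period does not vanish identically, which by that analysis forces $\Psi(s,t,W,\Phi)$, hence $L^{lin}(s,t,\d)$, hence $L^{lin,(0)}(s,t,\d)$, to have a pole at $s=0$.

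The main obstacle is the bookkeeping of the second paragraph: cleanly separating the mirabolic contribution, which must be identified with $L^{lin}_{(0)}$ and shown to produce no unexpected pole at $s=0$, from the $v\to0$ contribution that carries the distinction data, and identifying the latter with a genuine $(H_n,\chi_t^{-1})$-period of $\d$. This has to be carried out meromorphically, via the explicit exponent data of Proposition \ref{DL}, rather than through convergent period integrals, because — as recalled in Remark \ref{dif} — for non-unitary $\chi_t$ the relative matrix coefficients of $\d$ are not square-integrable on $H_n\backslash G_n$, so the $L^2$-methods behind the easiest arguments in the unitary case are not available.
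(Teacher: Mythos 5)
The paper itself does not prove this proposition: it is explicitly introduced as ``we recall Corollary 4.3 of \cite{Mcrelle}'', i.e.\ it is a black-box import from the predecessor paper, not something re-established here. So there is no in-paper proof to compare against; I can only assess your sketch on its own terms.

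Your forward direction (pole $\Rightarrow$ functional) is the standard residue argument and is essentially sound, modulo bookkeeping you acknowledge but do not carry out: the right-translation identity produces a $\chi_n^{-1/2}\chi_t^{-1}|h_0|^{-s}$ factor, and at $s=0$ one gets an element of $Hom_{H_n}(\d,\chi_n^{-1/2}\chi_t^{-1})$, not of $Hom_{H_n}(\d,\chi_t^{-1})$ unless $\chi_n$ is trivial, i.e.\ unless $n$ is even. The parity-dependent normalisation is precisely the kind of thing that must be made explicit, and ``built into the normalisation so that what survives is $\chi_t^{-1}$'' is a claim, not a verification. Similarly, the asserted identification of the $\C[q^{\pm s}]$-span of the $\Psi$-integrals with $\Phi(0)=0$ with $L^{lin}_{(0)}(s,t,\d)\C[q^{\pm s}]$ is a nontrivial unfolding statement that needs proof, not just a parenthetical.

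The converse direction has a genuine circularity. You want to show that a nonzero $\lambda\in Hom_{H_n}(\d,\chi_t^{-1})$ forces the residue functional at $s=0$ to be nonzero. Multiplicity one tells you only that \emph{if} the residue functional is nonzero then it is proportional to $\lambda$; it gives no information in the other direction. Your justification --- ``it is nonzero because $\lambda$ realises exactly the inner period attached to the exponent $c_\d$'' --- asserts exactly what needs proving: that an abstract $(H_n,\chi_t^{-1})$-invariant functional on $\d$ is realised by the specific integral/residue construction. Establishing that identification (e.g.\ via an explicit computation of the residue in terms of asymptotics of Whittaker functions and showing it cannot vanish on all of $W(\d,\theta)$, or by running a Jacquet--Shalika-type argument connecting the period to the integral) is the heart of the converse and is absent. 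As written, the converse direction does not go through.

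A further soft spot: the residue $\lim_{s\to 0}(1-q^{-s})\Psi(s,t,W,\Phi)$ depends on both $W$ and $\Phi$, so ``$W\mapsto\lim\ldots$'' is only a linear form on $W(\d,\theta)$ after $\Phi$ is frozen, and the claim that it factors through $\Phi\mapsto\Phi(0)$ is again something that must be extracted from the mirabolic decomposition rather than announced.
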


Now, we have the following consequence of Theorem \ref{main}.

\begin{prop}\label{eq1discrete}
Let $n\geq 2$ be a positive integer, and $\d$ be a discrete series representation of $G_n$. Then the factor 
$L^{lin,(0)}(s,t,\d(d,\rho))$ is equal to $L^{lin,(0)}(s,0,\d(d,\rho))$ for any $t\in \C$.
\end{prop}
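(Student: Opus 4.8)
Fix $t\in\C$. As recalled above from \cite{Mcrelle}, $L^{lin,(0)}(s,t,\d)$ is an Euler factor with simple poles in the variable $q^{-s}$; being therefore of the form $\prod_i(1-r_iq^{-s})^{-1}$ with the $r_i\in\C^*$ pairwise distinct, it is determined by its (finite) set of poles, seen in $\C/\tfrac{2\pi i}{\log q}\Z$. Hence it suffices to show that this pole set is independent of $t$. From the defining integrals one gets the twisting relation $L^{lin,(0)}(s,t,\d\otimes\nu^v)=L^{lin,(0)}(s+v,t,\d)$ for every $v\in\C$: replacing $\d$ by $\d\otimes\nu^v$ multiplies the integrands of $\Psi(s,t,W,\Phi)$ and of $\Psi_{(0)}(s,t,W)$ by $|h|^v$, hence shifts both $L^{lin}$ and $L^{lin}_{(0)}$, and so their ratio, by $s\mapsto s+v$. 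Combining this with Proposition \ref{expolediscrete} applied to the discrete series $\d\otimes\nu^{s_0}$, and recalling that $\chi_t^{-1}=\chi_{-t}$, one finds that $s_0$ is a pole of $L^{lin,(0)}(s,t,\d)$ if and only if $\d\otimes\nu^{s_0}$ is $(H_n,\chi_{-t})$-distinguished. It thus remains to prove that the set $\{s_0 : \d\otimes\nu^{s_0}\text{ is }(H_n,\chi_{-t})\text{-distinguished}\}$ does not depend on $t$.

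If $n$ is even this follows at once from Theorem \ref{main}: since $\d\otimes\nu^{s_0}$ is again a discrete series of $G_n$, it is $(H_n,\chi_{-t})$-distinguished if and only if it is $(S_n,\Theta)$-distinguished, a condition not involving $t$. So the pole set of $L^{lin,(0)}(s,t,\d)$ equals $\{s_0:\d\otimes\nu^{s_0}\text{ is }(S_n,\Theta)\text{-distinguished}\}$ for every $t$, and $L^{lin,(0)}(s,t,\d)=L^{lin,(0)}(s,0,\d)$.

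If $n=dr\geq 3$ is odd, then $d$ and $r$ are both odd, and the plan is to show the pole set is empty for all $t$, so that $L^{lin,(0)}(s,t,\d)=1=L^{lin,(0)}(s,0,\d)$. This amounts to showing that no discrete series of $G_n$ with $n$ odd, $n\geq 3$, is $(H_n,\chi_\beta)$-distinguished for any unramified character $\beta$ of $F^*$; one then applies this to $\beta=|.|^{-t}$ and to the twists $\d\otimes\nu^{s_0}$, which are again such discrete series. When $d\geq 3$, Proposition \ref{intermediaire} (using that $d$ is odd and that $\d(d,\rho)$ is a quotient of $\nu^{(1-d)/2}\rho\times\cdots\times\nu^{(d-1)/2}\rho$) forces $\rho$ to be $(H_r,\chi_\beta)$-distinguished with $r$ even or $r=1$; as $r$ is odd this leaves $r=1$ and $\rho=\beta$, so $\d=\d(d,\beta)$ is a twist of the Steinberg representation of $G_d$. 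Evaluating characters on $Z_d\subset H_d$ then forces $\beta$ to be of finite order, hence unitary, and after untwisting one is reduced to the non-distinction of the Steinberg representation of $G_d$ ($d$ odd, $\geq 3$) by $H_d$ against a unitary character, which follows from Theorem 3.1 of \cite{M14}. When $d=1$, $\d=\rho$ is cuspidal; the analogous computation on $Z_n\subset H_n$ forces $\rho$, after an unramified twist, to have trivial central character, hence to be unitary, and one concludes by Theorem 3.1 of \cite{M14} when the relevant twisted character is unitary, and by the square-integrability argument underlying Remark \ref{dif} otherwise.

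The delicate part is the odd case. Once the twisting relation and Proposition \ref{expolediscrete} are in place, the even case is an immediate consequence of Theorem \ref{main}, and the passage from equality of Euler factors to equality of their pole sets uses only the simple-pole property. In the odd case, by contrast, one must reduce the genuinely new non-unitary situations back to the previously known results of \cite{M14}, which forces one to keep track of central characters along all the twists involved and to handle the cuspidal and the generalized-Steinberg cases by separate arguments.
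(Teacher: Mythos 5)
Your treatment of the even case is correct and is exactly the paper's argument: an Euler factor with simple poles is determined by its pole set, Proposition \ref{expolediscrete} together with the (correct, and usefully made explicit) twisting relation $L^{lin,(0)}(s,t,\d\otimes\nu^v)=L^{lin,(0)}(s+v,t,\d)$ identifies the poles with the unramified twists of $\d$ that are $(H_n,\chi_{-t})$-distinguished, and Theorem \ref{main} replaces that condition by Shalika-distinction, which does not see $t$. You are also right to flag that Theorem \ref{main} says nothing when $n$ is odd, a point the paper's two-line proof passes over in silence; your strategy there (show the pole set is empty for every $t$, i.e.\ that no discrete series of $G_n$, $n$ odd $\geq 3$, is $(H_n,\chi_\beta)$-distinguished for unramified $\beta$) is the right one, and the reduction via Proposition \ref{intermediaire} and the central-character computation forcing $\beta^{d-1}=\1$ are fine.

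The gaps are in how you close the odd case. First, Theorem 3.1 of \cite{M14} is a statement about \emph{cuspidal} representations --- the paper describes it exactly so, and its ``clarification'' that $n\geq 2$ is needed there rests on all characters of $G_1$ being cuspidal --- so it cannot be cited for the (twisted) Steinberg representation of $G_d$, $d\geq 3$; you need the discrete-series analogue for unitary characters (the $L^2$-of-relative-coefficients argument alluded to in Remark \ref{dif}), and you should also note that after untwisting by $\beta\circ\det$ the character of $H_d$ you obtain is $h(g_1,g_2)\mapsto\beta(\det g_2)^{-2}$, which is not of the form $\chi_\gamma$, so the statement you invoke must be the general linear-period one for $M_{(p,q)}$ with $p\neq q$. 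Second, for $d=1$ your fallback ``by the square-integrability argument underlying Remark \ref{dif} otherwise'' is precisely the argument that Remark \ref{dif} says breaks down when the character is not unitary. For cuspidal $\rho$ the correct tool is Delorme's compact-support theorem for relative coefficients (the one used in Proposition \ref{cusp}), which is valid for arbitrary characters; combined with the $p\neq q$ non-distinction statement of \cite{M14} --- which the paper itself applies with possibly non-unitary characters inside the proof of Proposition \ref{intermediaire} --- this disposes of the cuspidal case directly, with no unitarity reduction needed. As written, your odd case does not yet stand on the references you cite.
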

\begin{proof}
Both are Euler factors with simple poles, hence it suffices to prove that they have the same poles. The result follows at once 
from Theorem \ref{main} and Proposition \ref{expolediscrete}.
\end{proof}

We can now prove the following.

\begin{thm}\label{eq2discrete}
Let $n$ be a positive integer and $t\in \C$, and $\d$ be a discrete series representation of $G_n$. Then the factor 
$L^{lin}(s,t,\d)$ is equal to $L(s+t+1/2,\phi_\d)L(2s,\Lambda^2\circ \phi_\d)$.
\end{thm}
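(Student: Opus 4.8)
The plan is to reduce the computation of $L^{lin}(s,t,\d)$ for a discrete series $\d=\d(d,\rho)$ to the already-known case $t=0$, which is treated in \cite{Mcrelle}, and then to upgrade the $t=0$ formula using the rationality in $q^{-t}$. First I would recall that by Proposition \ref{rec} one has the inductive relation
$$L^{lin}(s,t,\d(d,\rho))=L^{lin,(0)}(s,t,\d(d,\rho))\,L^{lin}(s,t,\nu^{1/2}\d(d-1,\rho)),$$
and that the twisted factor $L^{lin}(s,t,\nu^{1/2}\d(d-1,\rho))$ can be rewritten as $L^{lin}(s,t+1/2,\d(d-1,\rho))$ up to a shift, since twisting $\pi$ by $\nu^{1/2}$ amounts to shifting $t$ by $1/2$ in the integrals $\Psi(s,t,W,\Phi)$ (the character $\chi_t\chi_{|.|^{1/2}}$ involved in the integral is $\chi_{t+1/2}$ on the relevant blocks). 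So by induction on $d$ it suffices to understand the "radical-of-exponents" factor $L^{lin,(0)}(s,t,\d(d,\rho))$ as a function of $t$.

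Next, for $n\geq 2$ I would invoke Proposition \ref{eq1discrete}: the factor $L^{lin,(0)}(s,t,\d)$ is \emph{independent of $t$}, equal to $L^{lin,(0)}(s,0,\d)$. This is exactly where Theorem \ref{main} enters — it guarantees that the pole of $L^{lin,(0)}(s,t,\d)$ at $s=0$, which by Proposition \ref{expolediscrete} is governed by $(H_n,\chi_t^{-1})$-distinction, is present for some $t$ if and only if it is present for $t=0$ (equivalently, if and only if $\d$ has a Shalika model), since Shalika-distinction does not depend on the unramified twist $\chi_t$. Feeding this back into the inductive relation, the $t$-dependence of $L^{lin}(s,t,\d(d,\rho))$ comes \emph{only} from the innermost terms of the induction, i.e. from the cuspidal ($d=1$) pieces and the accumulated shifts of $t$ by half-integers; in particular $L^{lin}(s,t,\d)$ is, up to these explicit shifts, a product of factors of the form $L^{lin,(0)}(s,0,\d(k,\rho))$ and a cuspidal contribution, so it is manifestly rational in $q^{-s}$ and $q^{-t}$ and its $t$-dependence is completely explicit.

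At this point I would compare with the known answer at $t=0$. For $\d(d,\rho)$ the factor $L^{lin}(s,0,\d(d,\rho))$ was computed in \cite{Mcrelle} and equals $L(s+1/2,\phi_\d)L(2s,\Lambda^2\circ\phi_\d)$; concretely, for $\d=\d(d,\rho)$ the Langlands parameter is $\phi_\rho\otimes \mathrm{Sp}(d)$, and both $L(s+1/2,\phi_\d)$ and $L(2s,\Lambda^2\circ\phi_\d)$ are explicit products of Tate factors in $\rho$ with shifted arguments, matching term by term the product coming out of the induction above. Then the key observation is that replacing $t=0$ by a general $t$ in the right-hand side $L(s+t+1/2,\phi_\d)L(2s,\Lambda^2\circ\phi_\d)$ changes \emph{only} the first factor, and changes it precisely by the shift $s+1/2 \rightsquigarrow s+t+1/2$ — which is exactly the shift of $t$ by $t$ that the induction produces on the "linear" part while leaving the exterior-square part ($L^{lin,(0)}$ pieces, which contribute the $L(2s,\Lambda^2\circ\phi_\d)$) untouched, by Proposition \ref{eq1discrete}. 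So the general-$t$ identity follows from the $t=0$ identity by tracking the shifts. I would finish by treating the base cases: $n=1$, where $\d=\mu$ is a character and $\Psi_{(0)}\equiv 1$, so $L^{lin}(s,t,\mu)$ is a single Tate factor $L(s+t+1/2,\mu)$ (and $\Lambda^2$ of a one-dimensional parameter is $0$, so $L(2s,\Lambda^2\circ\phi_\d)=1$), matching directly; and $\d(d,\rho)$ with $r=1$ similarly.

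The main obstacle is the careful bookkeeping of the half-integer shifts of $t$ through the induction of Proposition \ref{rec} — in particular checking that twisting by $\nu^{1/2}$ really implements $t\mapsto t+1/2$ in these integrals with the $\chi_n^{1/2}$ normalisation, and that the shifts accumulate to give exactly $L(s+t+1/2,\phi_\d)$ rather than some other shift. Everything else is either a direct appeal to Theorem \ref{main} via Propositions \ref{expolediscrete} and \ref{eq1discrete}, or the already-established $t=0$ computation from \cite{Mcrelle}, or Tate's theory for the rank-one cuspidal and character cases.
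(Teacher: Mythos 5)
Your overall strategy is the same as the paper's: apply Proposition~\ref{rec} iteratively to write $L^{lin}(s,t,\d(d,\rho))$ as a product $\prod_{k=0}^{d-1}L^{lin,(0)}(s,t,\nu^{k/2}\d(d-k,\rho))$, use Proposition~\ref{eq1discrete} (hence Theorem~\ref{main}) to argue that every factor with inner index of size $\geq 2$ is $t$-independent, then pin the remaining $t$-independent product down to $L(2s,\Lambda^2\circ\phi_\d)$ by comparing with the known $t=0$ evaluation from \cite{Mcrelle}, with the cuspidal piece supplying the Tate factor $L(s+t+1/2,\phi_\d)$. The paper additionally splits cleanly into $r\geq 2$ (where all factors including the innermost cuspidal one are $t$-independent and $L(s+t+1/2,\phi_\d)\equiv 1$) and $r=1$; you should make this split explicit, since the ``cuspidal contribution'' behaves very differently in the two cases.

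The one genuine mistake is your claim --- which you yourself flag as needing verification --- that twisting by $\nu^{1/2}$ shifts $t$ by $1/2$ in the integrals. It does not: a Whittaker function $W'$ of $\nu^{1/2}\pi$ is $|\cdot|^{1/2}W$ for $W$ a Whittaker function of $\pi$, and $|h|^{1/2}$ for $h=h(g_1,g_2)\in H_n$ is $|g_1|^{1/2}|g_2|^{1/2}$, which absorbs into the $|h|^s$ term, not into $\chi_t(h)=|g_1|^t/|g_2|^t$. So $\Psi(s,t,\nu^{1/2}W,\Phi)=\Psi(s+1/2,t,W,\Phi)$, i.e. the twist is an $s$-shift. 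This error turns out to be harmless to your final answer for two reasons. First, the paper does not rewrite the twisted factors at all --- it applies Proposition~\ref{eq1discrete} directly to the twisted discrete series $\nu^{k/2}\d(d-k,\rho)$, and $t$-independence holds regardless of how one interprets the twist. Second, the only factor that actually carries $t$-dependence is the innermost cuspidal one, and on $G_1$ the integral depends only on $s+t+1/2$, so an $s$-shift and a $t$-shift yield the same Tate factor there; for the higher-rank factors the individual values would differ under your wrong shift, but they are never computed individually --- the whole $t$-independent block is read off from the $t=0$ comparison. If you drop the $t$-shift interpretation (or correct it to an $s$-shift) you recover exactly the paper's argument.
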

\begin{proof}
Write $\d$ as $\d(d,\rho)$, for $d\in \N-\{0\}$ dividing $n$, and $\rho$ a cuspidal representations of $G_r=G_{n/d}$. If $r\geq 2$, 
by Propositions \ref{rec} and \ref{eq1discrete}, we obtain the equalities 
$$L^{lin}(s,t,\d(d,\rho))=\prod_{k=0}^{d-1}L^{lin,(0)}(s,t,\nu^{k/2}\d(d-k,\rho))$$ 
$$=\prod_{k=0}^{d-1}L^{lin,(0)}(s,0,\nu^{k/2}\d(d-k,\rho))=L^{lin}(s,0,\d(d,\rho)).$$
Now, by Theorem 5.4 of \cite{Mcrelle}, we have $L^{lin}(s,0,\d)=L(s+1/2,\phi_\d)L(2s,\Lambda^2\circ \phi_\d)$, but
 $L(s+1/2,\d)$ is equal to $1$ according to \cite{GJ}, so we obtain the equality 
$$L^{lin}(s,t,\d)=L(s+t+1/2,\d)L(2s,\Lambda^2\circ \phi_\d)$$ when $r\geq 2$.

When $r=1$, by Proposition \ref{rec}, we have $$L^{lin}(s,t,\d(d,\rho))=\prod_{k=0}^{d-1}L^{lin,(0)}(s,t,\nu^{k/2}\d(d-k,\rho)).$$ 
The factor $L^{lin,(0)}(s,t,\nu^{(d-1)/2}\d(1,\rho))$ is equal to 
$$L^{lin,(0)}(s,t,\nu^{(d-1)/2}\rho)=L^{lin}(s,t,\nu^{(d-1)/2}\rho)=L(s+t+d/2,\rho),$$ so 
 $$(\star)\ \ L^{lin}(s,t,\d(d,\rho))=L(s+t+d/2,\rho)\prod_{k=0}^{d-2}L^{lin,(0)}(s,t,\d(d-k,\rho)).$$ 
For $k\leq d-2$, the factor $L^{lin,(0)}(s,t,\d(d-k,\rho))$ do not depend on $t$ by Proposition \ref{eq1discrete}. As 
we know from Theorem 5.4 of \cite{Mcrelle} that $$L^{lin}(s,0,\d)=L(s+1/2,\phi_\d)L(2s,\Lambda^2\circ \phi_\d),$$ and 
as $L(s+1/2,\d(d,\rho))=L(s+d/2,\rho)$ we 
deduce, letting $t=0$ in Equality $(\star)$, that $L(2s,\Lambda^2\circ \phi_\d)$ must be equal to $$\prod_{k=0}^{d-2}L^{lin,(0)}(s,t,\nu^{k/2}\d(d-k,\rho)),$$ and 
this proves the result in this case.
\end{proof}

We will see in the next paragraph that this implies that $L^{BF}(s,t,\d)$ is equal to $L(s+t+1/2,\phi_\d)L(2s,\Lambda^2\circ \phi_\d)$ 
for any discrete series $\d$ of $G_n$ (and in fact for any irreducible representation).

\subsection{Equality with the Galois factor for irreducible representations}

We first notice a consequence of the results of the preceding section. 
If $\pi$ is an irreducible representation of $G_n$, we denote by $L^{Gal}(s,t,\pi)=L(s+t+1/2,\phi_\pi)L(2s,\Lambda^2\circ \phi_\pi)$.

\begin{prop}\label{eq1}
Let $\pi$ be an irreducible representation of $G_n$, then for $(s,t)\in \C \times [-1/2,0]$, we have 
$$L^{lin}(s,t,\pi)=L^{Gal}(s,t,\pi).$$
\end{prop}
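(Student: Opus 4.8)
The plan is to reduce the statement for an irreducible $\pi$ to the case of discrete series representations, which was already settled in Theorem \ref{eq2discrete}, and to use the inductivity relations available on both sides. Write $\pi = L(\tau)$ with $\tau = \d_1 \times \dots \times \d_r$ induced of Langlands' type, where each $\d_i$ is a discrete series. On the $L^{lin}$ side, Theorem \ref{inductivity} gives, for $t \in [-1/2,0]$,
\[
L^{lin}(s,t,\pi) = \prod_{i=1}^r L^{lin}(s,t,\d_i) \prod_{1 \le i < j \le r} L(2s,\d_i,\d_j).
\]
On the $L^{Gal}$ side, I would use the compatibility $\phi_\pi = \oplus_{i=1}^r \phi_{\d_i}$ recalled at the end of Section \ref{galoisfactors}, together with the additivity of Artin $L$-functions under direct sums and the decomposition of the exterior square:
\[
L(2s,\Lambda^2 \circ \phi_\pi) = \prod_{i=1}^r L(2s,\Lambda^2 \circ \phi_{\d_i}) \prod_{1 \le i < j \le r} L(2s,\phi_{\d_i} \otimes \phi_{\d_j}),
\]
and similarly $L(s+t+1/2,\phi_\pi) = \prod_{i=1}^r L(s+t+1/2,\phi_{\d_i})$.

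Combining these, the claim $L^{lin}(s,t,\pi) = L^{Gal}(s,t,\pi)$ reduces to two facts: first, that $L^{lin}(s,t,\d_i) = L(s+t+1/2,\phi_{\d_i})L(2s,\Lambda^2\circ\phi_{\d_i}) = L^{Gal}(s,t,\d_i)$ for each discrete series $\d_i$ and each $t \in \C$, which is precisely Theorem \ref{eq2discrete}; and second, the identification of the cross terms, namely $L(2s,\d_i,\d_j) = L(2s,\phi_{\d_i}\otimes\phi_{\d_j})$. This latter equality is exactly the defining property of the Langlands correspondence recalled in Section \ref{galoisfactors}: for irreducible representations $\pi,\pi'$ one has $L(s,\pi,\pi') = L(s,\phi_\pi \otimes \phi_{\pi'})$, applied here with $s$ replaced by $2s$. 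Matching the two product expansions term by term then yields the result.

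The main obstacle, and the reason this is stated only for $t \in [-1/2,0]$ rather than all $t$, is that Theorem \ref{inductivity} (Theorem 5.2 of \cite{Mcrelle}) is only available in that range — the inductivity of $L^{lin}$ in the variable $t$ is not known globally, whereas the discrete series equality of Theorem \ref{eq2discrete} holds for all $t$. So the argument genuinely needs the restriction on $t$ at the step where Theorem \ref{inductivity} is invoked; everything else (the exterior-square decomposition, additivity of Artin factors, the Rankin-Selberg/Galois compatibility for pairs) is valid for all complex $s,t$. One small point to be careful about is that Theorem \ref{eq2discrete} is phrased for a single discrete series of $G_n$ with $n$ a positive integer; here the $\d_i$ may live on $GL$ of various smaller sizes, but the statement of Theorem \ref{eq2discrete} applies verbatim to each, and there is no parity constraint since we are not using the Shalika/distinction results directly but only the already-proved factorisation of $L^{lin}(s,t,\d)$. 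Hence the proof is a short bookkeeping argument assembling Theorems \ref{inductivity} and \ref{eq2discrete} with the formal properties of Artin $L$-functions.
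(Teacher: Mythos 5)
Your proof is correct and follows exactly the paper's argument: decompose both sides along $\pi=L(\d_1\times\dots\times\d_r)$ using Theorem \ref{inductivity} on the $L^{lin}$ side and the exterior-square formula of Section \ref{galoisfactors} on the $L^{Gal}$ side, then conclude with Theorem \ref{eq2discrete} for the discrete-series factors and the Rankin--Selberg/Galois compatibility for the cross terms. Your remarks on where the restriction $t\in[-1/2,0]$ enters are accurate and slightly more explicit than the paper's own two-line proof.
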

\begin{proof}
 We saw in Section \ref{galoisfactors} 
that if $\pi=L(\d_1\times\dots\times\d_r)$, then 
$$L^{Gal}(s,t,\pi)=\prod_{k=1}^r L^{Gal}(s,t,\d_i) \prod_{1\leq i<j\leq r}L(2s,\phi_{\d_i}\otimes\phi_{\d_j}).$$
The result now follows from Theorems \ref{inductivity} and \ref{eq2discrete}.
\end{proof}
 
We will use the following elementary fact about polynomials in two variables.

\begin{LM}\label{elementarylemma}
Let $P$ be a polynomial in $\C[X,Y]$, and suppose that it does not vanish on a set of the form $\C\times A$, where 
$A$ is infinite, then $P$ is of the form $P(X,Y)=P(0,Y)$.
\end{LM}
\begin{proof}
Write $P$ as $P=\sum_{k=0}^d a_k(Y)X^k$, then for any $y\in A$, by the fundamental theorem of algebra, one has $P(X,y)=a_0(y)$. 
As $A$ is infinite, the result follows.
\end{proof}

\begin{prop}\label{div1}
Let $\pi$ be an irreducible representation of $G_n$, for $n\geq 1$. 
Then the polynomial 
$$P^{BF}(s,t,\pi)=\frac{1}{L^{BF}(s,t,\pi)}$$ divides the polynomial 
$$P^{Gal}(s,t,\pi)=\frac{1}{L(s+t+1/2,\phi_\pi)L(2s,\Lambda^2\circ \phi_\pi)}$$ in $\C[q^{-s},q^{-t}]$.
\end{prop}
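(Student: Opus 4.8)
The plan is to deduce the divisibility from the analogous divisibility for the specialised factor $L^{lin}$, using the relation between $\Psi$ and $\Psi_{(0)}$-type integrals established in \cite{Mcrelle}, together with Lemma \ref{elementarylemma}. First I would recall that by Proposition \ref{df}, $P^{BF}(s,t,\pi)$ is a genuine element of $\C[q^{-s},q^{-t}]$ with $P^{BF}(0,q^{-t},\pi)=1$, so that for every fixed $t_0\in\C$ the specialisation $P^{BF}(s,t_0,\pi)$ is a polynomial in $q^{-s}$ with constant term $1$. The key comparison is between $L^{BF}(s,t_0,\pi)$ and $L^{lin}(s,t_0,\pi)$: since the integrals $\Psi(s,t,W,\Phi)$ specialised at $t=t_0$ are among the integrals defining $L^{lin}(s,t_0,\pi)$ (Proposition \ref{defLlin}), and conversely the ideal $I(t_0,\pi)$ is spanned by exactly these specialised integrals, one gets that $L^{BF}(s,t_0,\pi)$ and $L^{lin}(s,t_0,\pi)$ generate the same fractional ideal of $\C[q^{\pm s}]$ for all but finitely many $t_0$ — the finitely many bad values being those where a common factor of numerator and denominator could appear after specialisation, or where the spanning argument degenerates. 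Hence $P^{BF}(s,t_0,\pi)=P^{lin}(s,t_0,\pi)$ for all but finitely many $t_0$.

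Next I would invoke Proposition \ref{eq1}: for $t_0\in[-1/2,0]$ we have $L^{lin}(s,t_0,\pi)=L^{Gal}(s,t_0,\pi)$, hence $P^{lin}(s,t_0,\pi)=P^{Gal}(s,t_0,\pi)$, which in turn divides $P^{Gal}(s,t,\pi)$ specialised at $t=t_0$. Since $[-1/2,0]$ is infinite, there are infinitely many $t_0\in[-1/2,0]$ which are not among the finitely many bad values above, so for infinitely many $t_0$ the polynomial $P^{BF}(s,t_0,\pi)$ equals $P^{Gal}(s,t_0,\pi)$. The divisibility $P^{BF}(s,t,\pi)\mid P^{Gal}(s,t,\pi)$ in $\C[q^{-s},q^{-t}]$ is then what I would extract by a Gauss-lemma / unique-factorisation argument: write $P^{Gal}=P^{BF}\cdot R + S$ by polynomial division of $P^{Gal}$ by $P^{BF}$ in the variable $q^{-s}$ over the field $\C(q^{-t})$ — this is legitimate since $P^{BF}$ is monic up to the unit coming from its constant term $1$ — clear denominators, and observe that the remainder term $S(q^{-s},q^{-t})$, which has $q^{-s}$-degree strictly less than that of $P^{BF}$, vanishes identically at infinitely many $t_0$; by Lemma \ref{elementarylemma} applied coefficientwise, $S$ has all its coefficients (as polynomials in $q^{-t}$) vanishing at infinitely many points, so $S=0$. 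Finally, since $\C[q^{-t}][q^{-s}]$ is a UFD and $P^{BF}$ has content $1$, the quotient $R$ lies in $\C[q^{-s},q^{-t}]$, giving the divisibility over $\C[q^{-s},q^{-t}]$.

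The main obstacle I expect is the first step: pinning down precisely why $L^{BF}(s,t_0,\pi)$ and $L^{lin}(s,t_0,\pi)$ agree for all but finitely many $t_0$, i.e. controlling the set of bad $t_0$. The subtlety is that $L^{BF}$ is defined as a gcd over $\C(q^{-t})$ while $L^{lin}(s,t_0,\pi)$ is a gcd taken after specialisation, and specialising a gcd can pick up extra factors; one must argue that the element of $L_t[q^{\pm s}]$ realising $\Psi(s,t,W,\Phi)=1$ (used in Proposition \ref{df}) specialises to a valid unit for all but finitely many $t_0$, and that conversely no spurious common factor in $s$ and $t$ can concentrate along infinitely many vertical lines $t=t_0$ — which is again exactly the content of Lemma \ref{elementarylemma}. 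Once this is set up, everything else is the routine UFD bookkeeping sketched above.
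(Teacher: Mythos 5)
Your argument is correct in outline, but it takes a genuinely different route from the paper. You first establish (in sketch) that $L^{BF}(s,t_0,\pi)=L^{lin}(s,t_0,\pi)$ for all but finitely many $t_0$, combine this with Proposition \ref{eq1} to get $P^{BF}(s,t_0,\pi)=P^{Gal}(s,t_0,\pi)$ at infinitely many $t_0\in[-1/2,0]$, and then conclude by Euclidean division over $\C(q^{-t})$, Lemma \ref{elementarylemma} applied coefficientwise to the remainder, and Gauss's lemma. The paper avoids the two-sided specialisation comparison entirely: it only uses the \emph{tautological} direction, namely that $P^{lin}(s,t_0,\pi)\Psi(s,t_0,W,\Phi)\in\C[q^{\pm s}]$ by the very definition of $L^{lin}$ as a gcd at fixed $t_0$, so that by Proposition \ref{eq1} the rational function $P^{Gal}(s,t,\pi)\Psi(s,t,W,\Phi)$ has no singularities on $\C\times[-1/2,0]$; writing it as a coprime fraction $U/V$, Lemma \ref{elementarylemma} forces $V\in\C[q^{-t}]$, hence $P^{Gal}\Psi\in L_t[q^{\pm s}]$ for every $W,\Phi$, and the definition of $L^{BF}$ as generator over $L_t[q^{\pm s}]$ gives the divisibility, with Gauss's lemma descending it to $\C[q^{-s},q^{-t}]$. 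The step you correctly flag as the main obstacle --- that specialising the gcd commutes with taking the gcd of the specialised integrals away from finitely many $t_0$ --- is exactly the paper's Proposition \ref{almostequal}, which is proved only \emph{after} Theorem \ref{equal} (using the last part of Proposition \ref{df} for one inclusion and the expression of $L^{BF}$ as a finite $L_t$-combination of integrals for the other); so your route is not circular, but it front-loads a harder comparison that the paper's argument never needs. Two small corrections: having constant term $1$ in $q^{-s}$ does not make $P^{BF}$ monic --- division over the field $\C(q^{-t})$ is legitimate simply because the leading coefficient is invertible there, and you must also discard the finitely many $t_0$ at which that leading coefficient vanishes; what the constant term $1$ actually buys you is primitivity of $P^{BF}$ in $\C[q^{-t}][q^{-s}]$, which is the hypothesis Gauss's lemma needs at the end.
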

\begin{proof}
For any $W$ in $W(\pi,\theta)$ and $\Phi\in \sm_c(F^m)$, the Laurent polynomial $R(s,t)=P^{Gal}(s,t,\pi)\Psi(s,t,W,\Phi)\in L_t[q^{\pm s}]$ 
has no singularities for $(s,t)$ in 
$\C \times [-1/2,0]$, according to Proposition \ref{eq1}. 
We can always write $R$ as a quotient $U/V$, with $U$ and $V\in \C[q^{-s},q^{-t}]$, and co-prime. In particular, 
$U$ and $V$ have a finite number of zeroes in common (it follows for example from B\'ezout identities in $L_s[q^{-t}]$ and 
$L_t[q^{-s}]$). In particular, $V$ does not vanish on a set of the form $\C\times A$, where 
$A$ is infinite (take $A$ the complementary set of the projection on the second coordinate of the set of common zeroes of $U$ and $V$), 
and is thus of the form $V=V(q^{-t})$ according to Lemma \ref{elementarylemma}. This implies that $R$ belongs to 
$L_t[q^{-s}]$, and by definition of $P^{BF}(s,t,\pi)$, it in turn implies that $P^{BF}(s,t,\pi)$ divides 
$P^{Gal}(s,t,\pi)$ in $L_t[q^{-s}]$, hence in $O_t[q^{-s}]$ as they both have constant term equal to $1$.
\end{proof}

Let $\pi$ be an irreducible representation of $G_n$, for fixed $t$, we set $$P^{lin}(s,t,\pi)=\frac{1}{L^{lin}(s,t,\pi)}.$$

\begin{prop}\label{div2}
For fixed $t$, $P^{lin}(s,t,\pi)$ divides $P^{BF}(s,t,\pi)$ in $\C[q^{-s}]$.
\end{prop}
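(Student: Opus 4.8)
The plan is to pass from the two-variable picture to the fixed-$t$ picture and then compare fractional ideals. Fix $t\in\C$. First I would recall from Proposition \ref{df} that $P^{BF}(q^{-s},q^{-t},\pi)=1/L^{BF}(s,t,\pi)$ lies in $O_t[q^{-s}]=\C[q^{-t}][q^{-s}]$ with $P^{BF}(0,q^{-t},\pi)=1$, and that $P^{BF}(q^{-s},q^{-t},\pi)\,\Psi(s,t,W,\Phi)\in O_t[q^{\pm s}]=\C[q^{-t}][q^{\pm s}]$ for every $W\in W(\pi,\theta)$ and $\Phi\in\sm_c(F^m)$. Since these are honest polynomials in $q^{-t}$, evaluating $q^{-t}$ at our fixed $t$ is harmless: $P^{BF}(s,t,\pi)$ is then a polynomial in $q^{-s}$ with constant term $1$, and $P^{BF}(s,t,\pi)\,\Psi(s,t,W,\Phi)\in\C[q^{\pm s}]$. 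To make this last assertion meaningful I must identify the value at our $t$ of the two-variable rational function $\Psi(s,t,W,\Phi)$ with the one-variable rational function $\Psi(s,t,W,\Phi)$ of Proposition \ref{defLlin}; this holds because, as noted in the proof of Proposition \ref{df}, $\Psi(s,t,W,\Phi)$ has no singularity along the line $\C\times\{t\}$, so its restriction there is a well-defined element of $\C(q^{-s})$, and for $Re(s)$ large --- uniformly for the second variable in a neighbourhood of our $t$, by Proposition \ref{CV} --- it is computed by the same absolutely convergent integral that defines the one-variable object, so the two agree by uniqueness of meromorphic continuation in $s$.

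Next I would invoke Proposition \ref{defLlin}: $L^{lin}(s,t,\pi)$ generates the $\C[q^{\pm s}]$-fractional ideal $I(t,\pi)$ spanned by the $\Psi(s,t,W,\phi)$, and therefore is itself a finite $\C[q^{\pm s}]$-combination $\sum_i c_i(q^{\pm s})\,\Psi(s,t,W_i,\phi_i)$. Multiplying through by $P^{BF}(s,t,\pi)$ and using the previous step gives
$$\frac{P^{BF}(s,t,\pi)}{P^{lin}(s,t,\pi)}=P^{BF}(s,t,\pi)\,L^{lin}(s,t,\pi)=\sum_i c_i(q^{\pm s})\,\big(P^{BF}(s,t,\pi)\,\Psi(s,t,W_i,\phi_i)\big)\in\C[q^{\pm s}].$$

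Finally I would upgrade this to divisibility inside $\C[q^{-s}]$. Putting $X=q^{-s}$, the polynomials $P^{BF}(s,t,\pi)$ and $P^{lin}(s,t,\pi)$ in $\C[X]$ both take the value $1$ at $X=0$, and their ratio $Q$ was just shown to lie in $\C[X,X^{-1}]$. Writing $Q=\sum_{k=-N}^{M}b_kX^k$ with $N$ chosen minimal and supposing $N\geq1$, evaluation of $X^N P^{BF}(s,t,\pi)=(X^N Q)\,P^{lin}(s,t,\pi)$ at $X=0$ yields $0=b_{-N}$, contradicting minimality; hence $N=0$, so $Q\in\C[q^{-s}]$ and $P^{lin}(s,t,\pi)$ divides $P^{BF}(s,t,\pi)$ in $\C[q^{-s}]$.

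The one delicate point is the identification in the first paragraph of the restriction of the two-variable integral with the one-variable integral used to define $L^{lin}$; everything else is formal juggling of fractional ideals and of polynomials with invertible constant term.
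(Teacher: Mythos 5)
Your proof is correct and takes essentially the same route as the paper's, which simply observes that $P^{BF}(q^{-s},q^{-t_0},\pi)\Psi(s,t_0,W,\Phi)\in\C[q^{\pm s}]$ for all $W,\Phi$ and then says ``the proposition follows.'' You have merely spelled out the details the paper leaves implicit: the specialisation of the two-variable rational function at $t=t_0$ (justified exactly as in the proof of Proposition \ref{df} by the absence of singularities along $\{t=t_0\}$), the expression of $L^{lin}$ as a $\C[q^{\pm s}]$-combination of the $\Psi$'s, and the passage from $\C[q^{\pm s}]$ to $\C[q^{-s}]$ via the normalisation $P^{BF}(0)=P^{lin}(0)=1$.
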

\begin{proof}
Fix $t=t_0$. According to the last part of Proposition \ref{df}, for any $W$ in $W(\pi,\theta)$ and 
any $\Phi\in \sm_c(F^{\lfloor (n+1)/2 \rfloor})$ the element $$P^{BF}(q^{-s},q^{-t_0},\pi)\Psi(s,t_0,W,\Phi)$$ belongs to 
$\C[q^{\pm s}]$, and the proposition follows.
\end{proof}

We can now state the main result of this section.

\begin{thm}\label{equal}
Let $\pi$ be an irreducible representation of $G_n$, then one has the equality $$L^{Gal}(s,t,\pi)=L^{BF}(s,t,\pi).$$
\end{thm}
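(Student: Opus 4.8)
The plan is to combine the two one-sided divisibilities of Propositions~\ref{div1} and~\ref{div2} with the equality $L^{lin}=L^{Gal}$ on the strip $t\in[-1/2,0]$ furnished by Proposition~\ref{eq1}, thereby getting the theorem for $t$ in that strip, and then to propagate the identity to all $t\in\C$ using the fact (Proposition~\ref{df}) that $P^{BF}$ is a genuine polynomial in both $q^{-s}$ and $q^{-t}$.

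First I would fix $t_0\in[-1/2,0]$ and argue in the polynomial ring $\C[q^{-s}]$. Specialising Proposition~\ref{div1} at $t=t_0$ shows that $P^{BF}(s,t_0,\pi)$ divides $P^{Gal}(s,t_0,\pi)$; Proposition~\ref{div2} shows that $P^{lin}(s,t_0,\pi)$ divides $P^{BF}(s,t_0,\pi)$; and Proposition~\ref{eq1} gives $P^{lin}(s,t_0,\pi)=P^{Gal}(s,t_0,\pi)$. Hence $P^{BF}(s,t_0,\pi)$ and $P^{Gal}(s,t_0,\pi)$ divide one another in $\C[q^{-s}]$, so they differ by a unit, i.e.\ by a nonzero scalar; since both take the value $1$ at $q^{-s}=0$ (Proposition~\ref{df} for $P^{BF}$, the normalisation of the Artin $L$-factors for $P^{Gal}$), that scalar is $1$. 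Thus $L^{BF}(s,t_0,\pi)=L^{Gal}(s,t_0,\pi)$ for every $t_0\in[-1/2,0]$.

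It then remains to interpolate in $t$. By the final assertion of Proposition~\ref{df}, $P^{BF}(q^{-s},q^{-t},\pi)\in\C[q^{-s},q^{-t}]$, and the same holds for $P^{Gal}(q^{-s},q^{-t},\pi)$, since $q^{-(s+t+1/2)}=q^{-1/2}q^{-s}q^{-t}$ and $q^{-2s}=(q^{-s})^{2}$, so both Euler factors composing $L^{Gal}$ have inverses lying in $\C[q^{-s},q^{-t}]$. Setting $Q=P^{BF}-P^{Gal}\in\C[q^{-s},q^{-t}]$, the previous paragraph says that $q^{-s}\mapsto Q(q^{-s},q^{-t_0})$ vanishes identically for each $t_0\in[-1/2,0]$; writing $Q=\sum_k c_k(q^{-t})q^{-ks}$, each $c_k$ then vanishes at the infinitely many points $q^{-t_0}$, $t_0\in[-1/2,0]$, hence $c_k=0$ by the one-variable argument of Lemma~\ref{elementarylemma}. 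Therefore $Q=0$, i.e.\ $L^{Gal}(s,t,\pi)=L^{BF}(s,t,\pi)$.

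I do not expect a real obstacle at this stage: essentially all the difficulty has been absorbed into the earlier results, notably Theorem~\ref{main} (which yields Proposition~\ref{eq1discrete} and Theorem~\ref{eq2discrete}, hence Proposition~\ref{eq1}) and the two-variable rationality and polynomiality of the integrals $\Psi(s,t,W,\Phi)$ (Proposition~\ref{df}). The one point deserving attention is that the passage from $t\in[-1/2,0]$ to all $t$ is legitimate only because $P^{BF}$ is an honest polynomial in $q^{-t}$, not merely an element of $\C(q^{-t})[q^{-s}]$; were it only rational in $q^{-t}$ one would have to rule out cancellations inside a $t$-denominator, which is precisely what the last part of Proposition~\ref{df} already excludes.
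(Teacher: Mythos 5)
Your argument is correct and follows essentially the same route as the paper: combine Propositions \ref{div1} and \ref{div2} with Proposition \ref{eq1} to get mutual divisibility of $P^{BF}$ and $P^{Gal}$ in $\C[q^{-s}]$ for each fixed $t\in[-1/2,0]$, hence equality there, and then extend to all $t$ by polynomiality in $q^{-t}$. The paper compresses the final interpolation into one sentence; your explicit coefficient-by-coefficient argument via Lemma \ref{elementarylemma} is exactly the justification it leaves implicit.
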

\begin{proof}
For $(s,t)\in \C\times [-1/2,0]$, we have $P^{Gal}(s,t,\pi)=P^{lin}(s,t,\pi)$ by Proposition \ref{eq1}. By Propositions \ref{div1}
 and \ref{div2}, we know that for fixed $t$, $P^{lin}(s,t,\pi)$ divides $P^{BF}(s,t,\pi)$, and $P^{BF}(s,t,\pi)$ 
divides $P^{Gal}(s,t,\pi)$. In particular, for $(s,t)\in \C\times [-1/2,0]$, $P^{BF}(s,t,\pi)$ must be equal to $P^{Gal}(s,t,\pi)$, 
so this equality is in fact true for all $s$ and $t$.
\end{proof}

We can also prove the following almost everywhere equality (which should be true everywhere).

\begin{prop}\label{almostequal}
Let $\pi$ be an irreducible representation of $G_n$. Then there is a finite (maybe empty) set $A_{\pi}$ of $\C^*$, such that 
for $q^{-t}\in \C^*-A_{\pi}$, the two factors 
$L^{lin}(s,t,\pi)$ and $L^{BF}(s,t,\pi)$ are equal in $\C(q^{-s})$.
\end{prop}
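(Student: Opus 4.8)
The plan is to exploit the two divisibility relations we already have together with the fact that $P^{BF}$ is a genuine polynomial in two variables. First I would recall the setup: by Proposition \ref{div1}, $P^{BF}(s,t,\pi)$ divides $P^{Gal}(s,t,\pi)$ in $\C[q^{-s},q^{-t}]$; and by Proposition \ref{div2}, for every fixed $t$ the one-variable polynomial $P^{lin}(s,t,\pi)$ divides $P^{BF}(s,t,\pi)$ in $\C[q^{-s}]$. So it suffices to produce a finite set $A_\pi\subset\C^*$ off of which the reverse divisibility $P^{BF}(s,t,\pi)\mid P^{lin}(s,t,\pi)$ (as polynomials in $q^{-s}$) also holds; then the two have equal degree, both have constant term $1$, hence are equal, and the corresponding Euler factors agree in $\C(q^{-s})$.

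The key observation is that we already know exact equality on the segment $t\in[-1/2,0]$: by Proposition \ref{eq1}, $P^{lin}(s,t,\pi)=P^{Gal}(s,t,\pi)$ there, and by Theorem \ref{equal}, $P^{BF}(s,t,\pi)=P^{Gal}(s,t,\pi)$ as a two-variable polynomial. Write $P^{Gal}(s,t,\pi)=\prod_j \bigl(1-\beta_j q^{-a_j s-b_j t}\bigr)$ as a product of Tate-type linear factors coming from $L(s+t+1/2,\phi_\pi)$ and $L(2s,\Lambda^2\circ\phi_\pi)$, and factor $P^{BF}(s,t,\pi)$, a divisor of this, accordingly: $P^{BF}(s,t,\pi)=\prod_{j\in J}\bigl(1-\beta_j q^{-a_j s - b_j t}\bigr)$ for some subset $J$ of the indices (a subproduct of a product of distinct irreducibles in the UFD $\C[q^{-s},q^{-t}]$). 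For the factors indexed by $j\notin J$, specialising $t$ kills the corresponding factor of $P^{lin}(s,t,\pi)$ only at values of $t$ for which that linear factor becomes a unit or merges with something; more precisely, for a fixed $t_0$, the one-variable factor $1-\beta_j q^{-a_j s-b_j t_0}$ is a nontrivial factor of $P^{lin}(s,t_0,\pi)=P^{Gal}(s,t_0,\pi)$ for all but the finitely many $t_0$ with $q^{-t_0}$ in some explicitly bounded set. Taking $A_\pi$ to be the (finite) union, over $j\notin J$ and over coincidences among the linear factors, of these bad values of $q^{-t_0}$, I get that for $q^{-t_0}\notin A_\pi$ every irreducible factor of $P^{Gal}(s,t_0,\pi)$ specialises to a nontrivial polynomial, and $P^{BF}(s,t_0,\pi)$ is the subproduct over $J$ while $P^{lin}(s,t_0,\pi)=P^{Gal}(s,t_0,\pi)$; combined with Proposition \ref{div2} this forces $J$ to be everything after specialisation, i.e. $P^{BF}(s,t_0,\pi)=P^{lin}(s,t_0,\pi)$.

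Here is the cleaner way I would actually organise it, avoiding case analysis on $J$: since $P^{lin}(s,t_0,\pi)\mid P^{BF}(s,t_0,\pi)$ always (Proposition \ref{div2}) and $P^{BF}(s,t_0,\pi)\mid P^{Gal}(s,t_0,\pi)$ for all $t_0$ (Proposition \ref{div1}, specialised), it is enough to show $\deg_{q^{-s}}P^{lin}(s,t_0,\pi)=\deg_{q^{-s}}P^{Gal}(s,t_0,\pi)$ for $q^{-t_0}\notin A_\pi$. The degree of $P^{Gal}(s,t_0,\pi)$ in $q^{-s}$ is constant in $t_0$ except when two linear factors collide or a factor degenerates, which happens only for $q^{-t_0}$ in a finite set; and the degree of $P^{lin}(s,t_0,\pi)$ equals that of $P^{Gal}(s,t_0,\pi)$ for $t_0\in[-1/2,0]$ by Proposition \ref{eq1}, hence for the "generic" degree value, which is attained for all $q^{-t_0}$ outside a finite set. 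Defining $A_\pi$ as the union of these finitely many exceptional values of $q^{-t}$ gives the result.

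The main obstacle is the bookkeeping in the last paragraph: one has to be careful that the degree in $q^{-s}$ of the specialised polynomial $P^{Gal}(s,t_0,\pi)$ can genuinely drop (when $\beta_j q^{-b_j t_0}=1$, say, or when $a_j=0$), and that collisions between distinct linear factors can only raise multiplicities at finitely many $t_0$ — this uses that the linear forms $a_j s + b_j t$ are pairwise non-proportional, or that when they are proportional the corresponding $\beta_j$ differ, which follows from the explicit structure of $L(s+t+1/2,\phi_\pi)$ and $L(2s,\Lambda^2\circ\phi_\pi)$. Granting that, everything else is the soft argument above. I do not expect any genuinely new input beyond Propositions \ref{eq1}, \ref{div1}, \ref{div2} and Theorem \ref{equal}; indeed once Theorem \ref{equal} is in hand one could alternatively phrase $A_\pi$ directly as the finite set of $q^{-t}$ at which $L^{lin}(s,t,\pi)$ fails to equal $L^{Gal}(s,t,\pi)$, which is finite precisely because these two rational functions in $q^{-s}$, with coefficients in $\C(q^{-t})$, agree on the infinite set $t\in[-1/2,0]$.
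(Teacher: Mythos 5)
Your proposal has a genuine gap, and it lies precisely at the subtle point that the paper is at pains to work around. You treat $L^{lin}(s,t,\pi)$ (equivalently $P^{lin}(s,t,\pi)$) as if it were a rational function of $q^{-t}$, or at least as if its degree in $q^{-s}$ had to take a ``generic'' value for $q^{-t}$ outside a finite set. But $L^{lin}(s,t,\pi)$ is defined separately for each fixed $t$ as the gcd of a family of specialised integrals, and \emph{nothing a priori links its behaviour at different values of $t$}. The paper flags this explicitly just before Proposition \ref{defLlin}: ``it is not obvious at all that $L^{lin}(s,t,\pi)$ is rational as a map of $q^{-s}$ and $q^{-t}$.'' In your ``cleaner'' version, the step ``the degree of $P^{lin}(s,t_0,\pi)$ equals that of $P^{Gal}(s,t_0,\pi)$ for $t_0\in[-1/2,0]$\ldots hence for the generic degree value, which is attained for all $q^{-t_0}$ outside a finite set'' does not follow: knowing $\deg_{q^{-s}}P^{lin}=\deg_{q^{-s}}P^{Gal}$ on the strip gives no control on $\deg_{q^{-s}}P^{lin}$ elsewhere, since $P^{lin}$ could, for all you know, be a proper divisor of $P^{Gal}$ at every other value of $t_0$. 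The alternative you offer at the end is circular for the same reason: you cannot invoke the agreement of ``these two rational functions in $q^{-s}$, with coefficients in $\C(q^{-t})$'' on an infinite set, because $L^{lin}$ is not known to be such a function of $t$.

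The paper's actual argument avoids this by working with the \emph{definition} of $L^{BF}$ as the generator of a fractional ideal of $L_t[q^{\pm s}]$: this exhibits $L^{BF}(s,t,\pi)$ as a finite $L_t$-linear combination $\sum_i\lambda_i(t)\Psi(s,t,W_i,\Phi_i)$. Taking $A_\pi$ to be the union of the poles of the (finitely many) $\lambda_i\in\C(q^{-t})$, one can specialise the identity at any $t_0$ with $q^{-t_0}\notin A_\pi$ and conclude that $L^{BF}(s,t_0,\pi)$ lies in the fractional ideal $I(t_0,\pi)=L^{lin}(s,t_0,\pi)\C[q^{\pm s}]$; thus $P^{BF}(s,t_0,\pi)$ divides $P^{lin}(s,t_0,\pi)$ in $\C[q^{\pm s}]$, and Proposition \ref{div2} gives the reverse divisibility, forcing equality of the normalised Euler factors. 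Notice that this proof does not use Theorem \ref{equal} or $P^{Gal}$ at all: it is a purely formal consequence of the ideal-theoretic definition of $L^{BF}$, and the finite exceptional set $A_\pi$ is named explicitly as the poles of the coefficients $\lambda_i$. If you want to salvage your degree-comparison strategy, you would first need exactly this kind of specialisation statement to control $P^{lin}$ away from the strip $[-1/2,0]$ --- at which point you have essentially rederived the paper's argument.
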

\begin{proof}
By definition of $L^{BF}(s,t,\pi)$, there is a finite set $I$, Whittaker maps $W_i\in W(\pi,\theta)$, Schwartz functions $\Phi_i\in 
\sm_c(F^{\lfloor (n+1)/2 \rfloor})$, and rational maps $\l_i\in L_t$, such that 
$$L^{BF}(s,t,\pi)=\sum_{i\in I} \l_i(t)\Psi(s,t,W_i,\Phi_i).$$ 
Let $A_{\pi}$ be the union of the possible poles of the $\l_i$'s, it is a finite set, and for $q^{-t_0}\in \C^*-A_{\pi}$, 
we can specialise the equality and obtain that $L^{BF}(s,t_0,\pi)$ belongs to 
the $\C[q^{\pm s}]$-submodule of $L_s$ spanned by the integrals $\Psi(s,t,W,\Phi)$ for $W\in W(\pi,\theta)$ and $\Phi\in 
\sm_c(F^{\lfloor (n+1)/2 \rfloor})$, hence $L^{BF}(s,t_0,\pi)$ divides $L^{lin}(s,t_0,\pi)$. This together with Proposition \ref{div2} implies 
that $L^{BF}(s,t_0,\pi)=L^{lin}(s,t_0,\pi)$, which is our claim.
\end{proof}

We can finally obtain the functional equation of the local Bump-Friedberg $L$-factor. For $\Phi\in 
\sm_c(F^{\lfloor (n+1)/2 \rfloor})$, we denote by $\widehat{\Phi}^\theta$ the Fourier transform of $\Phi$ with respect to a $\theta$-self-dual Haar 
measure of $F^{\lfloor (n+1)/2 \rfloor}$.

\begin{cor}
Let $\pi$ be an irreducible representation of $G_n$. Then there is a unit $\epsilon(s,t,\pi,\theta)$ 
of $\C[q^{\pm s}, q^{\pm t}]$, such that for all $W\in W(\pi,\theta)$ and $\Phi\in\sm_c(F^{\lfloor (n+1)/2 \rfloor})$, one has 
\begin{equation} \label{eqfct} \frac{\Psi(1-s,-1/2-t,\widetilde{W},\widehat{\Phi}^\theta)}{L^{BF}(1-s,-1/2-t,\pi^\vee)}=
\frac{\epsilon(s,t,\pi,\theta) \Psi(s,t,W,\Phi)}{L^{BF}(s,t,\pi)}.\end{equation}
\end{cor}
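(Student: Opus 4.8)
The plan is to bootstrap the two–variable functional equation from the one–variable (fixed $t$) functional equation of \cite{Mcrelle}, using the explicit computation of $L^{BF}$ in Theorem \ref{equal} and the rationality statement of Proposition \ref{CV}. Recall from \cite{Mcrelle} that, for each fixed $t\in\C$ and each representation $\pi$ of $G_n$ of Whittaker type, one has a functional equation
$$\Psi(1-s,-1/2-t,\widetilde W,\widehat\Phi^\theta)=\gamma(s,t,\pi,\theta)\,\Psi(s,t,W,\Phi)\qquad(W\in W(\pi,\theta),\ \Phi\in\sm_c(F^m)),$$
with $\gamma(s,t,\pi,\theta)\in\C(q^{-s})$ and, in terms of Proposition \ref{defLlin}, $\gamma(s,t,\pi,\theta)=\epsilon^{lin}(s,t,\pi,\theta)\,L^{lin}(1-s,-1/2-t,\pi^\vee)/L^{lin}(s,t,\pi)$ for a unit $\epsilon^{lin}(s,t,\pi,\theta)$ of $\C[q^{\pm s}]$; here $\widetilde\pi\cong\pi^\vee$ is again of Whittaker type, with Whittaker model $W(\pi^\vee,\theta^{-1})$.

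Next I would fix, as in the proof of Proposition \ref{df}, a pair $(W_0,\Phi_0)$ with $\Psi(s,t,W_0,\Phi_0)\equiv 1$; then $\gamma(s,t,\pi,\theta)=\Psi(1-s,-1/2-t,\widetilde{W_0},\widehat{\Phi_0}^\theta)$, which lies in $\C(q^{-s},q^{-t})$ by Proposition \ref{CV} applied to $\pi^\vee$, so $\gamma$ is jointly rational in $q^{-s},q^{-t}$. I would then set
$$\epsilon(s,t,\pi,\theta)=\gamma(s,t,\pi,\theta)\,\frac{L^{BF}(s,t,\pi)}{L^{BF}(1-s,-1/2-t,\pi^\vee)}\in\C(q^{-s},q^{-t}),$$
the quotient of $L^{BF}$'s being explicitly rational in $q^{-s},q^{-t}$ by Theorem \ref{equal}. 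Equation (\ref{eqfct}) then holds for all $W,\Phi$ as an identity in $\C(q^{-s},q^{-t})$: both sides are rational in $q^{-s},q^{-t}$ (Proposition \ref{CV}, Theorem \ref{equal}), and for all but finitely many $t$ they agree, since for such $t$ one has $L^{BF}=L^{lin}$ (Proposition \ref{almostequal}, applied to $\pi$ and to $\pi^\vee$) and $\epsilon=\epsilon^{lin}$, so (\ref{eqfct}) reduces to the functional equation recalled above.

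It remains to prove that $\epsilon(s,t,\pi,\theta)$ is a unit of $\C[q^{\pm s},q^{\pm t}]$, i.e. a monomial, and this is the heart of the matter. Applying (\ref{eqfct}) to the data $(\pi^\vee,\widetilde W,\widehat\Phi^\theta,1-s,-1/2-t,\theta^{-1})$, and using $\widetilde{\widetilde{W}}=W$, Fourier inversion (the $\theta^{-1}$–Fourier transform of $\widehat\Phi^\theta$ being $\Phi$) and $(\pi^\vee)^\vee=\pi$, gives the reciprocity relation $\epsilon(s,t,\pi,\theta)\,\epsilon(1-s,-1/2-t,\pi^\vee,\theta^{-1})=1$. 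Since $\epsilon=\epsilon^{lin}$ for generic $t$ and $\epsilon^{lin}(\cdot,t,\pi,\theta)$ is a monomial in $q^{-s}$, comparing orders of vanishing along $q^{-s}=0$ shows the exponent is independent of $t$, so $\epsilon(s,t,\pi,\theta)=\omega(q^{-t})q^{as}$ with $a\in\Z$ and $\omega\in\C(q^{-t})$. Plugging $(W_0,\Phi_0)$ into (\ref{eqfct}) gives $\epsilon(s,t,\pi,\theta)P^{BF}(s,t,\pi)=P^{BF}(1-s,-1/2-t,\pi^\vee)\,\Psi(1-s,-1/2-t,\widetilde{W_0},\widehat{\Phi_0}^\theta)$, which lies in $\C[q^{\pm s},q^{\pm t}]$ by the last assertion of Proposition \ref{df} applied to $\pi^\vee$; since $P^{BF}(s,t,\pi)$ is, by Theorem \ref{equal}, a product of factors $1-\alpha_i q^{-(s+t+1/2)}$ and $1-\beta_j q^{-2s}$ and hence does not vanish along any line $q^{-t}=y_0$ with $y_0\in\C^\ast$, the denominator of $\omega$ can only vanish at $q^{-t}\in\{0,\infty\}$, so $\omega(q^{-t})=q^{-mt}\omega_0(q^{-t})$ with $\omega_0\in\C[q^{-t}]$ and $\omega_0(0)\neq 0$. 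Running the same argument for $\pi^\vee,\theta^{-1}$ and substituting into the reciprocity relation forces $\omega_0(q^{-t})$ times a polynomial in $q^{t}$ with nonzero constant term to be a constant, whence by a Newton–polygon comparison both are constants and $\epsilon(s,t,\pi,\theta)=c\,q^{-mt}q^{as}$ is a monomial. (If \cite{Mcrelle} records the identification of $\gamma$, for $t\in[-1/2,0]$, with the Galois gamma factor, this last part is immediate: $\epsilon(s,t,\pi,\theta)$ then agrees on the Zariski-dense set $t\in[-1/2,0]$ with the product $\epsilon(s+t+1/2,\phi_\pi,\theta)\,\epsilon(2s,\Lambda^2\circ\phi_\pi,\theta)$ of Artin local constants, each of which is a monomial in its variable by Section 7 of \cite{BH}, so $\epsilon$ equals that monomial.)

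The main obstacle is exactly this last step. The one–variable functional equation only shows that $\epsilon$ specialises, for almost every $t$, to a monomial in $q^{-s}$, and upgrading this to monomiality in both $q^{-s}$ and $q^{-t}$ genuinely uses the explicit shape of $L^{BF}$ from Theorem \ref{equal} (to bound the denominators in $q^{-t}$, via Proposition \ref{df}) together with the reciprocity relation; purely one–variable reasoning does not suffice on its own.
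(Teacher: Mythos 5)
Your argument is correct and follows essentially the same route as the paper: specialise to generic $t$ where $L^{BF}=L^{lin}$ (Proposition \ref{almostequal}) to import the one-variable functional equation of \cite{Mcrelle}, extend $\epsilon$ to an element of $\C(q^{-s},q^{-t})$ of the form $\omega(q^{-t})q^{as}$, and rule out zeroes and poles of $\omega$ on $\C^*$ using the pair $(W_0,\Phi_0)$ with $\Psi\equiv 1$ together with the known non-vanishing of $P^{BF}$ along vertical lines. The paper handles the last step a bit more directly (a pole of $\omega$ at $t_0$ would force $P^{BF}(s,t_0,\pi)=0$, contradicting Proposition \ref{div1}, and symmetrically for zeroes) where you invoke the reciprocity relation and a Newton-polygon comparison, but the ingredients are the same.
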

\begin{proof}
According to Proposition 4.11 of \cite{Mcrelle} and Proposition \ref{almostequal}, there is a finite (maybe empty)
set $A_\pi$ of $\C^*$, such that 
for $q^{-t}\in \C^*-A_{\pi}$, there is a unit $\epsilon_t(s,\pi,\theta)$ of $\C[q^{\pm s}]$ 
such that for all $W\in W(\pi,\theta)$ and $\Phi\in\sm_c(F^{\lfloor (n+1)/2 \rfloor})$, one has 
$$\frac{\Psi(1-s,-1/2-t,\tilde{W},\widehat{\Phi}^\theta)}{L^{BF}(1-s,-1/2-t,\pi^\vee)}=
\frac{\epsilon_t(s,\pi,\theta) \Psi(s,t,W,\Phi)}{L^{BF}(s,t,\pi)}.$$ 
As both quotients $$\frac{\Psi(1-s,-1/2-t,\tilde{W},\widehat{\Phi}^\theta)}{L^{BF}(1-s,-1/2-t,\pi^\vee)}$$ and 
$$\frac{\Psi(s,t,W,\Phi)}{L^{BF}(s,t,\pi)}$$ belong to $\C[q^{\pm s},q^{\pm t}]$, the map 
$\epsilon_t(s,\pi,\theta)$ extends to an element $\epsilon(s,t,\pi,\theta)$ in $\C(q^{-s},q^{-t})$ such that 
Equation (\ref{eqfct}) is satisfied. It remains to show that $\epsilon(s,t,\pi,\theta)$ is a unit 
of $\C[q^{\pm s}, q^{\pm t}]$. For $q^{-t}\notin A_\pi$, there is $\alpha_t \in \C^*$, and $m_t\in \Z$, such that 
$\epsilon(s,t,\pi,\theta)=\alpha_t q^{m_t s}$, and this implies that $\alpha_t$ extends to an element $\alpha(t)$ 
of $\C(q^{-t})$ and that $m_t$ does not depend on $t$. Suppose that $\alpha(t_0)=\infty$, this would imply that 
$\Psi(s,t_0,W,\Phi)/L^{BF}(s,t_0,\pi)$ is equal to $0$ for all $W\in W(\pi,\theta)$ and $\Phi\in\sm_c(F^{\lfloor (n+1)/2 \rfloor})$, but 
as we can choose them such that $\Psi(s,t_0,W,\Phi)=1$, this would imply that $P^{BF}(s,t_0,\pi)=0$. This would 
in turn imply, according to Proposition \ref{div1}, that $P^{Gal}(s,t_0,\pi)=0$, which is absurd. Hence $\alpha(t)$ has no pole, and 
we prove in the exact same manner that it has no zeroes. This implies that $\alpha(t)$ is of the form $\alpha q^{-lt}$ for some $l\in \Z$, 
and we are done. 
\end{proof}

\end{document}